%%%%%%%%%%%%%%%%%%%%%%%%%%%%%%%%%%%%%%%%%%%%%%%%%%%%%%%%%%%%%%
%
%  Global well-posedness and asymptotic behavior of swarmalator model
%
%%%%%%%%%%%%%%%%%%%%%%%%%%%%%%%%%%%%%%%%%%%%%%%%%%%%%%%%%%%%%%
\documentclass[11pt,reqno,letterpaper]{amsart}
\usepackage{amssymb,mathrsfs,graphicx,extpfeil}
\usepackage{epsfig}
\usepackage{indentfirst, latexsym, amssymb, enumerate,amsmath,graphicx}
\usepackage{float}
\usepackage{colortbl}
\usepackage{epsfig,subfigure}
\usepackage{mathtools}
\usepackage[width=6in]{geometry}
 \usepackage[pagewise]{lineno}
 %\linenumbers
%\usepackage{setspace}
%\doublespacing
%\usepackage{tikz}
%\usetikzlibrary{trees}% <-- new
%\usepackage{hyperref}
%\usepackage{mathbbol}
%\usepackage[draft]{Mytheorems}
%\usepackage{Mytheorems}
%\setlength{\unitlength}{1cm} \setlength{\topmargin}{0.1in}
%\setlength{\textheight}{8.2in} \setlength{\textwidth}{6in}
%\setlength{\oddsidemargin}{0.1in} \setlength{\evensidemargin}{0.1in}
\usepackage{amsmath}
%\allowdisplaybreaks
\allowdisplaybreaks[4]
\title[The Kuramoto model with frustration on general digraph]{Emergence of synchronization in Kuramoto model with frustration under general network topology}

%\author[Zhang]{Xiongtao Zhang \textsuperscript{$\dag$,}\textsuperscript{$\S$}}
%\address[Xiongtao Zhang]{\newline Center for Mathematical sciences, \newline Huazhong University of Science and Technology, Wuhan, China}
%\email{xtzhang@hust.edu.cn}

\author[Tingting Zhu]{Tingting Zhu \textsuperscript{$\ddag$}
%\textsuperscript{$\S$,}\textsuperscript{*}
}
%\address[Tingting Zhu]{\newline School of Mathematics and Statistics, \newline Huazhong University of Science and Technology, Wuhan, China }
\email{ttzhud201880016@163.com}

\newtheorem{theorem}{Theorem}[section]
\newtheorem{lemma}{Lemma}[section]
\newtheorem{corollary}{Corollary}[section]

\newtheorem{remark}{Remark}[section]

\newtheorem{definition}{Definition}[section]

\def\charf {\mbox{{\text 1}\kern-.30em {\text l}}}
    %{I_0} % initial kinetic density
 % initial kinetic density x kinetic 2nd moment
 % initial data for f

 %the flocking interaction
 % collisio

  %\mathcal \vee}

  % the "Boltzmann" scaling
 % the particle scaling per N particles

 % upper bound of r()

 % lower bound of r()

 % v-support
 %x-support

%%%%%%%%%%%%%%%%
\begin{document}
%%%%%%%%%%%%%%%%

\date{\today}

\subjclass[]{34D06, 34C15,92B25, 70F99.} 
\keywords{Kuramoto model, frustration, general digraph, spanning tree, hypo-coercivity, synchronization}

%\thanks{$^\dag$ Center for Mathematical Sciences, Huazhong University of Science and Technology, Wuhan, China (xtzhang@hust.edu.cn)}
\thanks{$^\ddag$ Key Laboratory of Applied Mathematics and Artificial Intelligence Mechanism, Hefei University, Hefei 230601, Anhui, China}
%\thanks{$^\S$ The authors X. Zhang and T. Zhu contribute equally to the work.}
%\thanks{$^*$ Corresponding author. }
\thanks{The author appreciated the guidance and modification of Xiongtao Zhang which helped to enhance the previous manuscript.}

\begin{abstract}
In this paper, we will study the emergent behavior of Kuramoto model with frustration on a general digraph containing a spanning tree. We provide a sufficient condition for the emergence of asymptotical synchronization if the initial data is confined in half circle.  As lack of uniform coercivity in general digraph, we apply the node decomposition criteria in \cite{H-L-Z20} to capture a clear hierarchical structure, which successfully yields the dissipation mechanism of phase diameter and a small invariant set after finite time. Then the dissipation of frequency diameter will be clear, which eventually leads  to the synchronization. 

%oscillators whose phases are initially distributed in a half circle and with general network containing a spanning tree, we provide sufficient frameworks yielding the complete synchronization exponentially fast in terms of system parameters and the size of frustration. As lack of uniform coercivity in general digraph, in order to capture the dissipation structure on a general network, we follow the node decomposition criteria in \cite{H-L-Z20} to catch a clear hierarchical structure, which yields the hypo-coercivity. This will drive the whole ensemble into a small region in finite time in large coupling and small frustration regimes, then the uniform boundedness of the phase diameter plus a phase shift eventually leads to the exponential emergence of synchronization.
%
\end{abstract}
\maketitle \centerline{\date}

%\tableofcontents
\section{Introduction}\label{sec:1}
Synchronized behavior in complex systems is ubiquitous and has been extensively investigated in various academic communities such as physics, biology, engineering \cite{B-B96,D-M08, L-P-L-S,P-R-K, P-E-G,St,S-S93,T-T}, etc. Recently, sychronization mechanism has been applied in  control of robot systems and power systems \cite{D-B12,D-C-B13,P-L-S-G-P}. The rigorous mathematical treatment of synchronization phenomena was started by two pioneers Winfree \cite{Wi} and Kuramoto \cite{Ku,Ku84} several decades ago, who introduced different types of first-order systems of ordinary differential equations to describe the synchronous behaviors. These models contain rich emergent behaviors such as synchronization, partially phase-lcoking and nonlinear stability, etc., and have been extensively studied in both theoretical and numerical level  \cite{ B-S00, C-H-J-K12,C-S09,D-B11,H-H-K,H-K-K-Z,H-K-L-N21,H-K-P15,H-K-R,H-R20,M-S07,St}.   

In this paper, we address the synchronous problem of Kuramoto model on a general graph under the effect of frustration. To fix the idea, we consider a digraph $\mathcal{G} = (\mathcal{V}, \mathcal{E})$ consisting of a finite set $\mathcal{V} = \{1,\ldots, N\}$ of vertices and a set $\mathcal{E} \subset \mathcal{V} \times \mathcal{V}$ of directed arcs. We assume that Kuramoto oscillators are located at vertices and interact with each other via the underlying network topology. For each vertex $i$, we denote the set of its neighbors by $\mathcal{N}_i$, which is the set of vertices that directly influence vertex $i$. Now, let $\theta_i = \theta_i(t)$ be the phase of the Kuramoto oscillator at vertex $i$, and define the $(0,1)$-adjacency matrix $(\chi_{ij})$ as follows:
\begin{equation*}
\chi_{ij} = 
\begin{cases}
\displaystyle 1 \quad \mbox{if the $j$th oscillator influences the $i$th oscillator}, \\
\displaystyle 0 \quad \mbox{otherwise}.
\end{cases}
\end{equation*}
Then, the set of neighbors of $i$-th oscillator is actually $\mathcal{N}_i := \{j: \chi_{ij} >0\}$.
In this setting, the Kuramoto model with frustration on a general network is governed by the following ODE system:
\begin{equation}\label{KuM}
\begin{cases}
\displaystyle \dot{\theta}_i(t) = \Omega_i + K \underset{j \in \mathcal{N}_i}{\sum} \sin(\theta_j(t) - \theta_i(t) + \alpha), \quad t>0, \quad i \in \mathcal{V}, \\
\displaystyle \theta_i(0) = \theta_{i0},
\end{cases}
\end{equation}
where $\Omega_i, K, N$ and $\alpha \in (0, \frac{\pi}{2})$ are the natural frequency of the $i$th oscillator, coupling strength, the number of oscillators and the uniform frustration between oscillators, respectively. For the case of nonpositive frustration, we can reformulate such a system into \eqref{KuM} form by taking $\hat{\theta}_i = - \theta_i$ for $i =1,2,\ldots, N$. Note that the well-posedness of system \eqref{KuM} is guaranteed by the standard Cauchy-Lipschitz theory since the vector field on the R.H.S of \eqref{KuM} is analytic. 

Comparing to the original Kuramoto model, there are two additional structures, i.e., frustration and general digraph. The frustration was introduced by Sakaguchi and Kuramoto \cite{S-K86}, due to the observation that a pair of strongly coupled oscillators eventually oscillate with a common frequency that deviates from the average of their natural frequencies. On the other hand, the original all-to-all symmetric network is an ideal setting, thus it is natural to further consider general digraph case. Therefore, the frustration model with general digraph is more realistic in some sense. Moreover, these two structures also lead to richer phenomenon. For instance, the author in \cite{D92} observed that the frustration is common in disordered interactions, and the author in \cite{Z11} found that frustration can induce the desynchronization through varying the value of $\alpha$ in numerical simulations. For more information, please refer to \cite{C-L18, D-X, Ha-K-L14,H-L14, H-L-X, H-L-Z20, L11,O-C-K-K08,P-R-C98,T-A-A09}.

% To make the model more realistic, 
%%Kuramoto model has been recently generalized via incorporating physical effects in interactions. 
%Sakaguchi and Kuramoto \cite{S-K86} proposed a variant  model, in which the coupling sinusoidal function incorporated frustration. 
%\begin{equation*}
%\begin{cases}
%\displaystyle \dot{\theta}_i(t) = \Omega_i + \frac{K}{N} \sum_{i=1}^N \sin(\theta_j(t) - \theta_i(t) + \alpha), \quad t>0, \\
%\displaystyle \theta_i(0) = \theta_{i0},
%\end{cases}
%\end{equation*}
%As noted in \cite{S-K86}, the frustration $\alpha$ is inferred from the empirical fact that,  In general, frustration may induce desynchronization, 
%

However, mathematically, for the Kuramoto model, the frustration and general digraph structures generate a lot of difficulties in rigorous analysis. For instance, the conservation law and gradient flow structure are lost, and thus the asymptotic states and dissipation mechanism become non-trivial. For all-to-all and symmetric case with frustration, in \cite{H-K-L14}, the authors provided sufficient frameworks leading to complete synchronization under the effect of uniform frustration. In their work, they required initial configuration to be confined in half circle. Furthermore, the authors in \cite{L-H16} dealt with the stability and uniqueness of emergent phase-locked states. In particular, the authors in \cite{H-K-Z18} exploited order parameter approach to study the identical Kuramoto oscillators with frustration. They showed that an initial configuration whose order parameter is bounded below will evolve to the complete phase synchronization or the bipolar state exponentially fast. On the other hand, for non-all-to-all case without frustration, the authors in \cite{D-H-K20} lifted the Kuramoto model to second-order system such that the second-order formulation enjoys several similar mathematical structures to that of Cucker-Smale flocking model \cite{D-H-K19}. But this method only works when the size of initial phases is less than a quarter circle, as we know the cosine function becomes negative if $\frac{\pi}{2}< \theta<\pi$. To the best knowledge of the authors, there is few work on the Kuramoto model over general digraph with frustration. The authors in \cite{H-K-P18} studied the Kuramoto model with frustrations on a complete graph which is a small perturbation of all-to-all network, and provided synchronization estimates in half circle. %Furthermore, they use order parameter approach to extend the admissible initial configuration whose order parameter has lower and upper bounds to a larger arc than half circle. 
%Readers are also referred to \cite{C-L18, D-X, Ha-K-L14,H-L14, H-L-X, H-L-Z20} for more information. 

%The author in \cite{D92} observed that the frustration is common in disordered interactions. And the author in \cite{Z11} found that frustration can induce the desynchronization through varying the value of $\alpha$ in numerical simulations. The effect of frustration has also been extensively studied in relation to networks of oscillators \cite{O-C-K-K08,P-R-C98,T-A-A09, L11}. However, these works are mostly based on numerics. Actually, incorporating frustration in coupling function causes many difficulties in the mathematical analysis, for instance, there is no conservation law and gradient flow structure which plays a key role for the emergence of synchronization \cite{D-X,H-H-K,H-K-R,C-S09,H-K-K-Z,H-K-P15,H-L-X}. To the best of our knowledge, in recent works \cite{H-K-L14,Ha-K-L14,L-H16,H-K-P18,H-K-Z18}, the authors started the rigorous mathematical study on the complete synchronization of Kuramoto model with frustration, which are relatively rare comparing to the work without frustration.

Our interest in this paper is studying the system \eqref{KuM} with uniform frustration on a general digraph. As far as the authors know, when the ensemble is distributed in half circle, the dissipation structure of the Kuramoto model with general digragh is still unclear. The main difficulties comes from the loss of uniform coercive inequality, which is due to the non-all-to-all and non-symmetric interactions. Thus we cannot expect to capture the dissipation from Gronwall-type inequality of phase diameter. For example, the time derivative of the phase diameter may be zero at some time for general digraph case. To this end, we switch to follow similar idea in \cite{H-L-Z20} to gain the dissipation through hypo-coercivity. Different from \cite{H-L-Z20} which deals with the Cucker-Smale model on a general digraph, the interactions in Kuramoto model requires more delicate estimates due to the lack of monotonicity of sine function in half circle. Eventually, we have the following main theorem..

%is that, when considering the ensemble in half circle, there is no uniform coercive inequality, namely the Gronwall-type inequality of phase diameter, to induce the dissipation, which is due to the non-all-to-all and non-symmetric interactions. For example, the time derivative of the phase diameter may be zero in the general digraph. 
%
%Therefore, we switch to construct the hypo-coercivity similar to that in \cite{H-L-Z20}, which will help us to capture the dissipation structure. Comparing to \cite{H-L-Z20} which deals with the Cucker-Smale model on a general digraph, the interactions in Kuramoto model lack the monotonic property since sine function is not monotonic in half circle. Therefore, we choose more delicate constructions and estimates of the convex combinations to fit the special structure of Kuramoto model, which eventually yields the following main theorem. 

\begin{theorem}\label{enter_small}
Suppose the network topology $(\chi_{ij})$ contains a spanning tree, $D^\infty$ is a given positive constant such that $D^\infty <\frac{\pi}{2}$, and all the oscillators are initially confined in half circle, i.e.,
\[D(\theta(0)) < \pi.\]
Then for sufficient large coupling strength $K$ and small frustration $\alpha$, there exists a finite time $t_*>0$  such that
\[D(\theta(t)) \le D^\infty, \quad \forall t \in [t_*, \infty).\]

%that the initial   configuration and the quantity $\eta$ satisfy
%\begin{equation}\label{condition_1}
%D(\theta(0)) < \zeta < \gamma < \pi, \quad \eta > \max \left\{\frac{1}{\sin \gamma}, \frac{2}{1 - \frac{\zeta}{\gamma}}\right\},
%\end{equation}
%where $\gamma, \zeta$ are positive constants. Then, for any positive constant $D^\infty <\frac{\pi}{2}$, there exists a finite time $t_* >0$ such that
%%we can find a sufficiently small constant $D^\infty > 0$ and some time $t_* >0$ such that
%\[D(\theta(t)) \le D^\infty, \quad \forall t \in [t_*, \infty),\]
%provided the frustration $\alpha$ and coupling strength $K$ satisfy the following constraints
%
%where $d$ is the number of general nodes which is smaller than $N$ and
%\[ c = \frac{\left(\sum_{j=1}^{N-1}\eta^j A(2N,j) + 1\right)\gamma}{\sin \gamma}.\]
\end{theorem}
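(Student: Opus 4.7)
\medskip
\noindent\textbf{Proof proposal.} The plan is to implement the hypo-coercivity strategy alluded to in the introduction by exploiting the hierarchical structure of the spanning tree via the node decomposition of \cite{H-L-Z20}. Concretely, I would first partition the vertex set $\mathcal{V}$ into disjoint ``levels'' $L_0, L_1, \ldots, L_d$, where $L_0$ is the unique root strongly connected component of $(\chi_{ij})$ (which exists because $(\chi_{ij})$ contains a spanning tree) and each subsequent $L_k$ collects vertices whose in-neighbours lie in $L_0 \cup \cdots \cup L_{k-1}$ with at least one neighbour in $L_{k-1}$. The dynamics on $L_0$ decouples from the rest, and every vertex in $L_{k+1}$ is driven, at least in part, by vertices in lower layers; this is what makes an inductive propagation along levels possible.

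The first and most delicate step is to show that the sub-diameter $D_{L_0}(\theta(t))$ of the root component enters a small set in finite time. Since $L_0$ is strongly connected and only self-interacts, the restricted system is a Kuramoto model with frustration on a strongly connected digraph. I would analyse the extremal pair $(M(t), m(t))$ realising $D_{L_0}$: although $\dot{D}_{L_0}$ may vanish momentarily due to the non-all-to-all coupling, iterating the elementary sine estimate along a path of length at most $\mathrm{diam}(L_0)$ yields an effective hypo-coercive inequality of the form $D_{L_0}(t+\tau) \le (1-\gamma) D_{L_0}(t) + C(\alpha + K^{-1})$ for uniform $\tau, \gamma > 0$, as long as the phases remain confined in the half circle. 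Choosing $K$ large and $\alpha$ small then drives $D_{L_0}$ below any prescribed threshold $\delta_0 < D^\infty$ in finite time.

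The second step is an induction along the hierarchy: assuming $D_{L_0 \cup \cdots \cup L_k}(\theta(t)) \le \delta_k$ after some time $t_k$, I would prove that after a further finite waiting period one has $D_{L_0 \cup \cdots \cup L_{k+1}}(\theta(t)) \le \delta_{k+1}$ with $\delta_{k+1}$ still below $D^\infty$. Each vertex $i \in L_{k+1}$ has at least one in-neighbour in the already concentrated lower layers, so the corresponding sine term exerts a restoring drift toward that cluster; the remaining neighbours can distort but cannot overwhelm this drift once $K$ dominates $\alpha$ and the natural-frequency spread. A Lyapunov-style quantity tracking the angular deviation of $L_{k+1}$ from the lower-level cluster then contracts in finite time, and iterating $d$ times gives $D(\theta(t)) \le D^\infty$ for all $t \ge t_*$.

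The main obstacle will be the non-monotonicity of $\sin$ on the half circle: unlike the Cucker--Smale analogue in \cite{H-L-Z20}, the coupling $\sin(\theta_j - \theta_i + \alpha)$ need not be monotone in $\theta_j - \theta_i$ while the diameter lies in $(\pi/2, \pi)$, so the extremal-pair comparison by concavity arguments fails in this regime. To handle this I would first establish an intermediate invariant set (a diameter bound slightly exceeding $\pi/2$) which the ensemble must enter before the sharper dissipation activates, and separately verify by a sign analysis of the sine terms at the extrema that the trajectory cannot escape the half circle along the way. Once the diameter drops below $\pi/2 - O(\alpha)$, $\sin$ becomes monotone on the relevant range and the hypo-coercive contraction described above propagates cleanly through all levels, yielding the conclusion.
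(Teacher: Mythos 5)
Your high-level plan — decompose the graph hierarchically, contract the root component first, then propagate the bound inductively down the hierarchy — matches the paper's strategy, and you correctly identify the non-monotonicity of $\sin$ on the half circle as the central obstacle. But the proposal has a genuine gap at its most load-bearing step: the claimed finite-time contraction for the root component. You assert that ``iterating the elementary sine estimate along a path of length at most $\mathrm{diam}(L_0)$'' yields $D_{L_0}(t+\tau)\le (1-\gamma)D_{L_0}(t)+C(\alpha+K^{-1})$, but this is precisely the step that fails by naive extremal-pair arguments in the half-circle regime: when $D_{L_0}$ is close to $\pi$ the direct coupling $\sin(\theta_m-\theta_M)$ is near zero, the agents realising the max and min change over time, and the agents move simultaneously so a chain of pairwise sine estimates does not compose into a uniform discrete-time contraction of the diameter. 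The paper does not work with the diameter at all at this stage; instead it builds, via a relabelling-and-convex-combination algorithm, an auxiliary Lipschitz quantity $Q^0(t)=\bar\theta_0-\underline\theta_0$ with exponentially weighted coefficients $\bar a_k^0,\underline a_k^0$ (chosen with a parameter $\eta>\max\{1/\sin\gamma,\,2/(1-\zeta/\gamma)\}$ exactly to absorb the sine non-monotonicity), establishes the two-sided comparison $\beta D_0(\theta)\le Q^0\le D_0(\theta)$, and proves a genuine Gronwall inequality $\dot Q^0\le D(\Omega)+2NK\sin\alpha-\tfrac{K\cos\alpha}{c}\,Q^0$ valid while $D_0<\gamma<\pi$. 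That construction, not a path-iteration on the diameter, is where the hypo-coercivity actually comes from, and your sketch offers no substitute for it.

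Your proposed remedy for the non-monotonicity — first establish an ``intermediate invariant set'' just above $\pi/2$ — is circular in the same way: with positive frustration the diameter can grow (Lemma \ref{diameter_zeta} gives only a finite-time bound $\bar t$, not forward invariance of any region above $\pi/2$), so to show the ensemble even reaches the near-$\pi/2$ region you already need the contraction you are trying to build. A secondary issue is your level definition: you require every in-neighbour of a vertex in $L_{k+1}$ to lie in $L_0\cup\cdots\cup L_{k-1}$, which silently assumes that beyond $L_0$ the graph is a DAG. A general digraph with a spanning tree can have non-trivial strongly connected components at higher levels, with internal feedback; the paper's maximum-node decomposition (Definition \ref{node}, Lemma \ref{Node decomposition}) is designed exactly to handle this, and the induction then runs a $Q^k$-type estimate on each strongly connected block with a forcing term $(2N+1)K\cos\alpha\,D_{k-1}(\theta)$ coming from the lower layers. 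Without a correct decomposition and without the $Q^k$ construction, the inductive step ``a Lyapunov-style quantity contracts in finite time'' remains an unproved assertion.
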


\begin{remark}
Theorem \ref{enter_small} claims that all oscillators confined initially in half circle will enter a small region after some finite time. It is natural to ask how large $K$ and how small $\alpha$ we need to guarantee the Theorem \ref{enter_small}. In fact, according to the proof in later sections, we have the following explicit constraints on $K$ and $\alpha$,
\begin{equation}\label{condition_2}
\begin{aligned}
&\tan \alpha < \frac{1}{\left(1+ \frac{(d+1)\zeta}{\zeta - D(\theta(0))}\right)2Nc} \frac{\beta^{d+1}D^\infty}{[4(2N+1)c]^d}, \quad D^\infty + \alpha < \frac{\pi}{2},\\
&1 > \left(1+ \frac{(d+1)\zeta}{\zeta - D(\theta(0))}\right) \frac{c[4(2N+1)c]^d}{\beta^{d+1}D^\infty} \left(\frac{D(\Omega)}{K\cos \alpha} + \frac{2N \sin \alpha }{\cos \alpha}\right),
\end{aligned}
\end{equation}
where $d$ is the number of general nodes which is smaller than $N$ (see Definition \ref{node}), $D(\Omega)$ is the diameter of natural frequency, and the other parameters $\zeta$, $\gamma$, $\eta$, $\beta$ and $c$ are positive constants which satisfy the following properties,
\begin{equation}\label{condition_1}
\begin{aligned}
&D(\theta(0)) < \zeta < \gamma < \pi, \quad \eta > \max \left\{\frac{1}{\sin \gamma}, \frac{2}{1 - \frac{\zeta}{\gamma}}\right\},\\ 
&\beta=1-\frac{2}{\eta}, \quad c = \frac{\left(\sum_{j=1}^{N-1}\eta^j A(2N,j) + 1\right)\gamma}{\sin \gamma},
\end{aligned}
\end{equation}
where $A(2N,j)$ denotes the permutation. It's obvious that we can find admissible parameters satisfying \eqref{condition_1} since $D(\theta(0)) < \pi$. Once the parameters are fixed, we immediately conclude \eqref{condition_2} holds for small $\alpha$ and large $K$.
\end{remark}
\begin{remark}
After $t_*$, all oscillators are confined in a small region less than $\frac{\pi}{2}$, and Kuramoto model \eqref{KuM} will be equivalent to Cucker-Smale type model with frustration (see \eqref{s_KuM}). Therefore, we can directly apply the methods and results in \cite{D-H-K20} to conclude the emergence of frequency synchronization for large coupling and small frustration (see Corollary \ref{complete_syn}). Therefore, to guarantee the emergence of synchronization, it suffices to show the detailed proof of Theorem \ref{enter_small}.
\end{remark}

The rest of the paper is organized as follows. In Section \ref{sec:2}, we recall some concepts on the network topology and provide an a priori local-in-time estimate on the phase diameter of whole ensemble with frustration. In Section \ref{sec:3},  we consider a strong connected ensemble with frustration for which the initial phases are distributed in a half circle. We show that for large coupling strength and small frustration, the phase diameter plus a phase shift is uniformly bounded by a small value after some finite time. In Section \ref{sec:4}, we study the general network with a spanning tree structure under the effect of uniform frustration. We use the inductive argument and show that Kuramoto oscillators will concentrate into a small region less than a quarter circle in finite time, which eventually leads to the emergence of synchronization exponentially fast. Section \ref{sec:5} is devoted to a brief summary.\newline

\section{Preliminaries}\label{sec:2}
\setcounter{equation}{0}
In this section, we first introduce some fundamental concepts such as spanning tree and node decomposition of a general network \eqref{KuM}. Then, we will provide some necessary notations and an a priori estimate that will be frequently used in later sections.

\subsection{Directed graph}
%Roughly speaking, spanning tree means we can find an oscillator which directly or in-directly influences all the other oscillators. That is to say, a system with the absence of spanning tree can be departed into two parts without any interactions. Therefore, for the emergence of collective behavior on a general digraph, spanning tree is the most pivotal structure . 

Let the network topology be registered by the neighbor set $\mathcal{N}_i$ which consists of all neighbors of the $i$th oscillator. Then, for a given set of $\{\mathcal{N}_i\}_{i=1}^N$ in system \eqref{KuM}, we have the following definition.

\begin{definition}\label{spanning_tree}
\begin{enumerate}[(1)]
\item The Kuramoto digraph $\mathcal{G} = (\mathcal{V}, \mathcal{E})$ associated to \eqref{KuM} consists of a finite set $\mathcal{V} = \{1,2,\ldots,N\}$ of vertices, and a set $\mathcal{E} \subset \mathcal{V} \times \mathcal{V}$ of arcs with ordered pair $(j,i) \in \mathcal{E}$ if $j \in \mathcal{N}_i$.

\item A path in $\mathcal{G}$ from $i_1$ to $i_k$ is a sequence $i_1,i_2,\ldots,i_k$ such that
\[i_s \in \mathcal{N}_{i_{s+1}} \quad \mbox{for} \ 1 \le s \le k-1.\]
If there exists a path from $j$ to $i$, then vertex $i$ is said to be reachable from vertex $j$.

\item The Kuramoto digraph contains a spanning tree if we can find a vertex such that any other vertex of $\mathcal{G}$ is reachable from it.
\end{enumerate}
\end{definition}

\noindent In order to guarantee the emergence of synchronization, we will always assume the existence of a spanning tree throughout the paper. Now we recall the concepts of root and general root introduced in \cite{H-L-Z20}. Let $l,k \in \mathbb{N}$ with $1 \le l \le k \le N$, and let $C_{l,k} = (c_l,c_{l+1},\ldots,c_k)$ be a vector in $\mathbb{R}^{k-l+1}$ such that
\[c_i \ge 0, \quad l \le i \le k \quad \mbox{and} \quad \sum_{i=l}^k c_i = 1.\]
For an ensembel of $N$-oscillators with phases $\{\theta_i\}_{i=1}^N$, we set $\mathcal{L}_l^k(C_{l,k})$ to be a convex combination of $\{\theta_i\}_{i=l}^k$ with the coefficient $C_{l,k}$:
\[\mathcal{L}_l^k(C_{l,k}) := \sum_{i=l}^k c_i \theta_i.\]
Note that each $\theta_i$ is a convex combination of itself, and particularly $\theta_N = \mathcal{L}_N^N(1)$ and $\theta_1 = \mathcal{L}_1^1(1)$.

\begin{definition}\label{general_root}
(Root and general root)
\begin{enumerate}
\item We say $\theta_k$ is a root if it is not affected by the rest oscillators, i.e., $j \notin \mathcal{N}_k$ for any $j \in \{1,2,\ldots,N\} \setminus \{k\}$.

\item We say $\mathcal{L}_l^k(C_{l,k})$ is a general root if $\mathcal{L}_l^k(C_{l,k})$ is not affected by the rest oscillators, i.e., for any $i\in \{l, l+1,\ldots,k\}$ and $j \in \{1,2,\ldots,N\} \setminus \{l,l+1,\ldots, k\}$, we have $j \notin \mathcal{N}_i$. 
\end{enumerate}
\end{definition}

\begin{lemma}\cite{H-L-Z20}
The following assertions hold.
\begin{enumerate}
\item If the network contains a spanning tree, then there is at most one root.
\item Assume the network contains a spanning tree. If $\mathcal{L}_k^N(C_{k,N})$ is a general root, then $\mathcal{L}_1^l(C_{1,l})$ is not a general root for each $l \in \{1,2,\ldots,k-1\}$.
\end{enumerate}
\end{lemma}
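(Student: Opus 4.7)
For part (1), my plan is a short contradiction argument. Assume two distinct roots $\theta_{k_1}$ and $\theta_{k_2}$ coexist, so that $j \notin \mathcal{N}_{k_s}$ for every $j \ne k_s$ and $s \in \{1,2\}$. By the spanning tree hypothesis there is a vertex $r$ from which every other vertex of $\mathcal{V}$ is reachable through a directed path. Any such path terminating at $k_s$ with positive length would use a final arc of the form $(j,k_s)\in\mathcal{E}$, which is forbidden by the root condition; hence the only way to reach $k_s$ from $r$ is via the trivial path, i.e.\ $r=k_s$. Applying this to both indices forces $r=k_1=k_2$, a contradiction.

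For part (2), I would argue again by contradiction, combining the two general root conditions with the spanning tree. The general root property of $\mathcal{L}_k^N(C_{k,N})$ forbids every arc from $\{1,\ldots,k-1\}$ into $\{k,\ldots,N\}$; if $\mathcal{L}_1^l(C_{1,l})$ were also a general root for some $l\le k-1$, then there would be no arcs from $\{l+1,\ldots,N\}$ into $\{1,\ldots,l\}$ either. Let $r$ once more denote a vertex from which every element of $\mathcal{V}$ is reachable. I would split on the block containing $r$: if $r\le k-1$, then since $\{1,\ldots,k-1\}$ has no outgoing arcs into $\{k,\ldots,N\}$, every directed path starting at $r$ stays inside $\{1,\ldots,k-1\}$ and so no vertex in $\{k,\ldots,N\}$ is reachable; symmetrically, if $r\ge l+1$, then $\{l+1,\ldots,N\}$ is forward-invariant under paths and every vertex in $\{1,\ldots,l\}$ is unreachable from $r$. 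Because $l<k$ implies $l+1\le k$, the sets $\{1,\ldots,k-1\}$ and $\{l+1,\ldots,N\}$ together cover $\mathcal{V}$, so one of the two cases must apply, and each contradicts reachability.

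The single conceptual ingredient used in both parts is the following forward-invariance observation: if a subset $S\subset\mathcal{V}$ has no incoming arcs from $\mathcal{V}\setminus S$, then the complement $\mathcal{V}\setminus S$ is forward-invariant under the directed-path relation, since any path initiated in $\mathcal{V}\setminus S$ can never traverse an arc crossing into $S$. Once this is extracted as a tiny combinatorial lemma, both assertions reduce to routine case analysis, and I do not foresee any serious obstacle beyond keeping track of which arcs are permitted in each direction; the argument is essentially the one already recorded in \cite{H-L-Z20}, from which the statement is cited.
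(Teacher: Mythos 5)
Your proof is correct, and both parts work exactly as you describe: the forward-invariance observation (a set $S$ with no incoming arcs from $\mathcal{V}\setminus S$ cannot be entered by any directed path initiated outside $S$) is the right abstraction, and the case split on whether the spanning-tree root $r$ lies in $\{1,\ldots,k-1\}$ or in $\{l+1,\ldots,N\}$ is exhaustive precisely because $l\le k-1$ forces these two intervals to cover $\mathcal{V}$. Note, however, that this paper does not supply its own proof of the lemma --- it is imported verbatim from \cite{H-L-Z20} --- so there is no in-paper argument to compare against; your self-contained derivation fills that gap and is consistent with the definitions of root, general root, path, and spanning tree as stated in Section 2.
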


\subsection{Node decomposition}
In this part, we will recall the concept of maximum node introduced in \cite{H-L-Z20}. Then, we can follow node decomposition introduced in \cite{H-L-Z20} to represent the whole graph $\mathcal{G}$ (or say vertex set $\mathcal{V}$) as a disjoint union of a sequence of nodes. The key point is that the node decomposition shows a hierarchical structure, then we can exploit this advantage to apply the induction principle. Let $\mathcal{G} = (\mathcal{V}, \mathcal{E}), \mathcal{V}_1 \subset \mathcal{V}$, and a subgraph $\mathcal{G}_1 = (\mathcal{V}_1, \mathcal{E}_1)$ is the digraph with  vertex set $\mathcal{V}_1$ and arc set $\mathcal{E}_1$ which consists of the arcs in $\mathcal{G}$ connecting agents in $\mathcal{V}_1$. For a given digraph $\mathcal{G} = (\mathcal{V}, \mathcal{E})$, we will identify a subgraph $\mathcal{G}_1 = (\mathcal{V}_1, \mathcal{E}_1)$ with its vertex set $\mathcal{V}_1$ for convenience. Now we first present the definition of nodes below.

\begin{definition}\cite{H-L-Z20}\label{node} (Node)
Let $\mathcal{G}$ be a digraph. A subset $\mathcal{G}_1$ of vertices is called a node if it is strongly connected, i.e., for any subset $\mathcal{G}_2$ of $\mathcal{G}_1$, $\mathcal{G}_2$ is affected by $\mathcal{G}_1\setminus\mathcal{G}_2$. Moreover, if $\mathcal{G}_1$ is not affected by $\mathcal{G}\setminus\mathcal{G}_1$, we say $\mathcal{G}_1$ is a maximum node. 
\end{definition}

Intuitively, a node can be understood through a way that a set of oscillators can be viewed as a "large" oscillator. The concept of node can be exploited to simplify the structure of the digraph, which indeed helps us to catch the attraction effect more clearly in the network topology.

\begin{lemma}\label{one_maximum}\cite{H-L-Z20}
Any digraph $\mathcal{G}$ contains at least one maximum node. A digraph $\mathcal{G}$ contains a unique maximum node if and only if $\mathcal{G}$ has a spanning tree.
\end{lemma}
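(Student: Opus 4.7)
The plan is to reduce both parts to elementary facts about the condensation digraph $\tilde{\mathcal{G}}$ obtained from $\mathcal{G}$ by contracting each strongly connected component (SCC) to a single vertex. The crux is the identification that $M \subset \mathcal{V}$ is a maximum node if and only if $M$ is an SCC of $\mathcal{G}$ that is a source of $\tilde{\mathcal{G}}$, i.e., has no incoming arcs from outside. One direction is immediate from Definition \ref{node}: a source SCC is strongly connected, so it is a node, and not being affected from outside is precisely the maximality clause. For the other direction, a maximum node $M$ must in fact be an SCC: if $M$ were strictly contained in a larger strongly connected set $M \cup \{v\}$ with $v \notin M$, then strong connectedness of $M \cup \{v\}$ would force an induced arc from $v$ into $M$, making some vertex of $M$ affected by $v$, which contradicts maximality. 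Once this identification is in place, the first assertion is immediate because every finite DAG has at least one source (take the endpoint of any maximal backward chain).

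For the equivalence I would treat the two directions separately using the same correspondence. For $(\Leftarrow)$, assume $\mathcal{G}$ contains a spanning tree rooted at $r$ and let $C$ be the SCC of $r$. Given any maximum node $C'$, the tree path from $r$ to a vertex of $C'$ must enter $C'$ through an arc coming from outside unless $r \in C'$; the former contradicts that $C'$ is a source SCC, so $C' = C$, which delivers both that $C$ is a maximum node and that it is unique. For $(\Rightarrow)$, suppose the unique maximum node is $C$, so that $C$ is the unique source of $\tilde{\mathcal{G}}$. I will show $C$ reaches every SCC: otherwise the set $S$ of SCCs not reachable from $C$ would be nonempty and induce a subDAG containing its own source $s$; any in-neighbor of $s$ in $\tilde{\mathcal{G}}$ must lie outside $S$ (by minimality of $s$ within $S$), forcing $s$ to be reachable from $C$, a contradiction---unless $s$ has no in-neighbors at all in $\tilde{\mathcal{G}}$, in which case $s \ne C$ would be a second source of $\tilde{\mathcal{G}}$, again contradicting uniqueness. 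Pulling these paths back through the condensation and using strong connectedness inside each SCC then assembles a spanning tree of $\mathcal{G}$ rooted at any chosen vertex of $C$.

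The main obstacle I anticipate is not the DAG-level reasoning, which is standard, but the careful verification that the slightly nonstandard notion in Definition \ref{node} really collapses to ``source SCC of $\tilde{\mathcal{G}}$''---in particular that the ``for any subset $\mathcal{G}_2$'' clause forces a maximum node to be maximal as a strongly connected set, so that there is no ambiguity between ``largest strongly connected piece unaffected from outside'' and an SCC. Once that equivalence is established, the remaining steps are just the usual source-existence and reachability facts for finite acyclic digraphs.
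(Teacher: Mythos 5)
The paper does not actually prove this lemma --- it is cited from \cite{H-L-Z20} --- so your argument is filling in content rather than reproducing the paper's proof. Your strategy, passing to the condensation digraph $\tilde{\mathcal{G}}$ and identifying maximum nodes with source strongly connected components, is the natural approach, and the DAG-level steps (existence of a source; spanning-tree $\Leftrightarrow$ unique source) are carried out correctly.

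However, there is a gap in the one step you yourself flagged as delicate: showing that a maximum node $M$ must be an entire SCC. You argue that ``if $M$ were strictly contained in a larger strongly connected set $M \cup \{v\}$, strong connectedness of $M \cup \{v\}$ would force an arc from $v$ into $M$.'' This conditional is correct, but it does not establish that $M$ is an SCC: a strongly connected proper subset $M$ of an SCC $C$ need not admit any single vertex $v \in C \setminus M$ for which $M \cup \{v\}$ is strongly connected. For example, with vertices $\{1,2,3,4\}$ and arcs $1 \leftrightarrow 2$, $2 \to 3 \to 4 \to 1$, the set $M = \{1,2\}$ is strongly connected and $C = \{1,2,3,4\}$, yet neither $\{1,2,3\}$ nor $\{1,2,4\}$ is strongly connected. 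So ``no single-vertex extension is strongly connected'' is strictly weaker than ``$M$ is an SCC,'' and your argument establishes only the former. The repair is short and in fact uses Definition~\ref{node} directly: if $M$ is strongly connected and $M \subsetneq C$ for the SCC $C$ of some vertex of $M$, apply the node condition to $\mathcal{G}_1 = C$ and $\mathcal{G}_2 = M$ to conclude that $M$ is affected by $C \setminus M \subseteq \mathcal{G} \setminus M$, contradicting that $M$ is a maximum node. (Equivalently, take any $w \in C \setminus M$ and a path from $w$ into $M$; its first entry into $M$ is an arc from outside.) With this correction, your correspondence ``maximum node $=$ source SCC'' holds, and the rest of your proof --- the existence of a source in a finite DAG, and the reachability argument in both directions of the equivalence --- goes through as written.
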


\begin{lemma}\label{Node decomposition}\cite{H-L-Z20}(Node decomposition)
Let $\mathcal{G}$ be any digraph. Then we can decompose $\mathcal{G}$ to be a union as $\mathcal{G} = \bigcup_{i=0}^d (\bigcup_{j=1}^{k_i} \mathcal{G}_i^j)$ such that
\begin{enumerate}
\item $\mathcal{G}_0^j$ are the maximum nodes of $\mathcal{G}$, where $1\le j\le k_0$.
\item For any $p,q$ where $1 \le p\le d$ and $1\le q \le k_p$, $\mathcal{G}_p^q$ are the maximum nodes of $\mathcal{G} \setminus (\bigcup_{i=0}^{p-1} (\bigcup_{j=1}^{k_i} \mathcal{G}_i^j))$.
\end{enumerate}
\end{lemma}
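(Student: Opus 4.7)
The plan is to build the decomposition layer by layer by iteratively peeling off maximum nodes, using Lemma \ref{one_maximum} at each step. Set $\mathcal{H}_0 := \mathcal{G}$. By Lemma \ref{one_maximum}, $\mathcal{H}_0$ contains at least one maximum node, and I let $\{\mathcal{G}_0^j\}_{j=1}^{k_0}$ be the collection of \emph{all} maximum nodes of $\mathcal{H}_0$. Inductively, once the $\mathcal{G}_i^j$ have been chosen for $0\le i\le p-1$, define the induced subdigraph
\[\mathcal{H}_p := \mathcal{G}\setminus \bigcup_{i=0}^{p-1}\bigcup_{j=1}^{k_i}\mathcal{G}_i^j,\]
and, if $\mathcal{H}_p$ is nonempty, apply Lemma \ref{one_maximum} to $\mathcal{H}_p$ to collect all of its maximum nodes $\{\mathcal{G}_p^j\}_{j=1}^{k_p}$. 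Halt at the first index $d$ with $\mathcal{H}_{d+1}=\emptyset$; since $|\mathcal{V}|<\infty$ and each maximum node contains at least one vertex, this occurs after finitely many steps, and the resulting family covers $\mathcal{V}$ by construction. Conditions (1) and (2) in the statement are then immediate from how each $\mathcal{G}_p^q$ was selected.

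The nontrivial point is verifying that the sets $\{\mathcal{G}_p^j\}_{j=1}^{k_p}$ collected at each layer are pairwise disjoint, so that the union in the statement is an honest disjoint decomposition. Suppose two distinct maximum nodes $\mathcal{G}_p^{j_1}$ and $\mathcal{G}_p^{j_2}$ of $\mathcal{H}_p$ share a vertex $v$. Their union $U:=\mathcal{G}_p^{j_1}\cup \mathcal{G}_p^{j_2}$ is strongly connected inside $\mathcal{H}_p$, because any two of its vertices can be joined through $v$ using the strong connectivity within each piece. Moreover, $U$ is not affected by $\mathcal{H}_p\setminus U$: indeed $\mathcal{H}_p\setminus U\subseteq \mathcal{H}_p\setminus \mathcal{G}_p^{j_i}$ for $i=1,2$, and neither $\mathcal{G}_p^{j_i}$ is affected by its complement in $\mathcal{H}_p$. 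Hence $U$ is a strongly connected subset of $\mathcal{H}_p$, strictly larger than each $\mathcal{G}_p^{j_i}$, that still receives no arc from outside $U$. This contradicts the maximality built into the definition of a maximum node, so intra-layer disjointness holds.

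The main obstacle I foresee is pinning down the meaning of "maximum" in Definition \ref{node}: read literally, the definition allows any strongly connected subset not receiving outside arcs, but the disjointness argument and the uniqueness clause in Lemma \ref{one_maximum} force one to interpret "maximum node" as \emph{maximal} among such subsets. Once this interpretation is fixed, the rest is a clean finite induction: at each stage the inductive hypothesis produces a strictly smaller residual digraph $\mathcal{H}_{p+1}$, so termination is automatic, and the construction matches (1) and (2) verbatim. No estimate, no analytic input, and no use of the specific Kuramoto structure enters the proof; it is a purely combinatorial statement about finite digraphs.
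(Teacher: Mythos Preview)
The paper does not supply its own proof of this lemma; it is quoted from \cite{H-L-Z20} and stated without argument. Your proposal is correct and is exactly the natural construction: peel off all maximum nodes, pass to the residual digraph, and iterate, with finiteness of $\mathcal{V}$ forcing termination. Your disjointness argument is sound, and your remark that ``maximum node'' must be read as \emph{maximal} among strongly connected subsets receiving no outside arcs is the right reading---it is what makes the uniqueness clause in Lemma~\ref{one_maximum} coherent and is indeed the intended meaning in \cite{H-L-Z20}.
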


\begin{remark}\label{w.l.o.g.} Lemma \ref{Node decomposition} shows a clear hierarchical structure on a general digraph. For the convenience of later analysis, we make some comments on important notations and properties that are used throughout the paper.
\begin{enumerate}
\item From the definition of maximum node, for $1\le q \ne q' \le k_p$, we see that $\mathcal{G}_p^q$ and $\mathcal{G}_p^{q'}$ do not affect each other. Actually, $\mathcal{G}_p^q$ will only be affected by $\mathcal{G}_0$ and $\mathcal{G}_i^j$, where $1 \le i \le p-1, \ 1 \le j \le k_i$. Thus in the proof of our main theorem (see Theorem \ref{enter_small}), without loss of generality, we may assume $k_i = 1$ for all $1 \le i \le d$. Hence, the decomposition can be further simplified and expressed by
\[\mathcal{G} = \bigcup_{i=0}^d \mathcal{G}_i,\]
where $\mathcal{G}_p$ is a maximum node of $\mathcal{G}\setminus (\bigcup_{i=0}^{p-1} \mathcal{G}_i)$.

\item For a given oscillator $\theta_i^{k+1} \in \mathcal{G}_{k+1}$, we denote by $\bigcup_{j=0}^{k+1} \mathcal{N}_i^{k+1}(j)$ the set of neighbors of $\theta_i^{k+1}$, where $\mathcal{N}_i^{k+1}(j)$ represents the neighbors of $\theta_i^{k+1}$ in $\mathcal{G}_j$. Note that the node decomposition and spanning tree structure in $\mathcal{G}$ guarantee that $\bigcup_{j=0}^k \mathcal{N}_i^{k+1}(j) \ne \emptyset$.
\end{enumerate}
\end{remark}

\subsection{Notations and local estimates} In this part, for notational simplicity, we introduce some notations, such as the extreme phase, phase diameter of $\mathcal{G}$ and the first $k+1$ nodes, natural frequency diameter, and cardinality of subdigraph:
\begin{align*}
&\theta_M =\max_{1\le k \le N} \{\theta_k\} = \max_{0\le i \le d} \max_{1 \le j \le N_i} \{\theta_j^i\}, \quad \theta_m = \min_{1\le k \le N} \{\theta_k\} = \min_{0\le i \le d} \min_{1 \le j \le N_i} \{\theta_j^i\},\\
&D(\theta) = \theta_M - \theta_m,\quad D_k(\theta) = \max_{0 \le i \le k} \max_{1 \le j \le N_i} \{\theta_j^i\} - \min_{0 \le i \le k} \min_{1 \le j \le N_i} \{\theta_j^i\}, \\
&\Omega_M =\max_{0\le i \le d} \max_{1 \le j \le N_i} \{\Omega_j^i\}, \quad \Omega_m =\min_{0\le i \le d} \min_{1 \le j \le N_i} \{\Omega_j^i\}, \quad D(\Omega) = \Omega_M - \Omega_m,\\
&N_i = |\mathcal{G}_i|, \quad S_k = \sum_{i=0}^k N_i, \quad 0 \le k \le d, \quad \sum_{i=0}^d N_i = N.
\end{align*}
Finally, we provide an a priori local-in-time estimate on the phase diameter to finish this section, which states that all oscillators can be confined in half circle in short time.
\begin{lemma}\label{diameter_zeta}
Let $\theta_i$ be a solution to system \eqref{KuM} and suppose the initial phase diameter satisfies
\[D(\theta(0)) < \zeta < \gamma < \pi,\]
then there exists a finite time $\bar{t} >0$ such that the phase diameter of whole ensemble remains less than $\zeta$ before $\bar{t}$, i.e.,
\begin{equation*}
D(\theta(t)) < \zeta, \quad t \in [0,\bar{t}) \quad \text{where} \quad \bar{t} = \frac{\zeta -D(\theta(0))}{D(\Omega) + 2NK\sin \alpha}.
\end{equation*}
\end{lemma}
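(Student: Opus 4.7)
The plan is to control the Dini derivative of the diameter $D(\theta)=\theta_M-\theta_m$ by picking, at each time $t$, an index $M$ (resp.\ $m$) at which the maximum (resp.\ minimum) phase is attained, and then bounding $\dot\theta_M-\dot\theta_m$ from above as long as $D(\theta)<\pi$. A standard continuity/bootstrap step then upgrades this pointwise bound to a quantitative time interval.

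First I would set $T:=\sup\{t\ge 0:D(\theta(s))<\pi\text{ on }[0,t]\}$; by continuity of the phases and the initial assumption $D(\theta(0))<\zeta<\pi$, we have $T>0$. For $t\in[0,T)$, at a point where the maximum is attained by the $i$-th oscillator, use the angle addition identity
\[
\sin(\theta_j-\theta_M+\alpha)=\sin(\theta_j-\theta_M)\cos\alpha+\cos(\theta_j-\theta_M)\sin\alpha.
\]
Since $\theta_j-\theta_M\in(-\pi,0]$ and $\cos\alpha>0$, the first term is nonpositive; the second is at most $\sin\alpha$. Hence each summand in the coupling is $\le\sin\alpha$, giving
\[
\dot\theta_M\le\Omega_M+K|\mathcal N_M|\sin\alpha\le\Omega_M+KN\sin\alpha.
\]
The symmetric computation at the minimizing index, using $\theta_j-\theta_m\in[0,\pi)$, yields $\dot\theta_m\ge\Omega_m-KN\sin\alpha$. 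Subtracting,
\[
\frac{d^+}{dt}D(\theta(t))\le D(\Omega)+2NK\sin\alpha\qquad\text{on }[0,T).
\]

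Integrating this differential inequality gives
\[
D(\theta(t))\le D(\theta(0))+\bigl(D(\Omega)+2NK\sin\alpha\bigr)t\qquad\text{for }t\in[0,T).
\]
Plugging in $t=\bar t=(\zeta-D(\theta(0)))/(D(\Omega)+2NK\sin\alpha)$ shows the right-hand side is strictly less than $\zeta<\pi$, so in particular $D(\theta(t))<\pi$ on $[0,\min\{T,\bar t\})$, and by the definition of $T$ we must have $T\ge\bar t$. Therefore $D(\theta(t))<\zeta$ on $[0,\bar t)$, as claimed.

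The only mildly delicate point is that $D(\theta(\cdot))$ is not classically differentiable because the extremal indices can switch; this is the main obstacle but it is standard, handled by working with the upper Dini derivative and invoking the fact that at any time $t$ the inequality above holds with any admissible choice of extremal indices. All other steps are direct manipulations of the ODE and elementary trigonometric bounds using only $\alpha\in(0,\pi/2)$ and $D(\theta)<\pi$.
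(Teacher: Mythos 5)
Your proof is correct and essentially identical in approach to the paper's: both bound the time derivative of the diameter via the angle-addition identity and the sign of $\sin(\theta_j-\theta_M)\le 0$ and $\sin(\theta_j-\theta_m)\ge 0$ on $(-\pi,0]$ and $[0,\pi)$ respectively, then integrate the resulting linear bound $D(\Omega)+2NK\sin\alpha$. The only difference is that you make the continuity bootstrap (via $T$) and the Dini-derivative technicality explicit, whereas the paper handles these points informally.
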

\begin{proof}
From system \eqref{KuM}, we see that the dynamics of extreme phases is given by the following equations
\begin{equation*}
\dot{\theta}_M = \Omega_M + K \sum_{j \in \mathcal{N}_M} \sin (\theta_j - \theta_M + \alpha), \qquad \dot{\theta}_m = \Omega_m + K\sum_{j\in \mathcal{N}_m} \sin (\theta_j - \theta_m + \alpha).
\end{equation*}
We combine the above equations to estimate the dynamics of phase diameter
\begin{equation}\label{A-1}
\begin{aligned}
\dot{D}(\theta) &= \dot{\theta}_M - \dot{\theta}_m  \\
&= \Omega_M - \Omega_m + K \sum_{j \in \mathcal{N}_M} \sin (\theta_j - \theta_M + \alpha) - K\sum_{j\in \mathcal{N}_m} \sin (\theta_j - \theta_m + \alpha) \\
&\le D(\Omega) + K \sum_{j \in \mathcal{N}_M} \left[\sin(\theta_j - \theta_M)\cos \alpha + \cos(\theta_j - \theta_M) \sin \alpha\right] \\
&- K\sum_{j\in \mathcal{N}_m} \left[\sin(\theta_j -\theta_m)\cos \alpha + \cos(\theta_j - \theta_m) \sin \alpha\right]\\
&= D(\Omega) + K \cos \alpha \left(\sum_{j \in \mathcal{N}_M} \sin(\theta_j - \theta_M) - \sum_{j\in \mathcal{N}_m} \sin (\theta_j - \theta_m)\right)\\
&+ K \sin \alpha \left(\sum_{j \in \mathcal{N}_M} \cos(\theta_j -\theta_M) - \sum_{j \in \mathcal{N}_m}\cos(\theta_j - \theta_m)\right),\end{aligned}
\end{equation}
where we use the formula
\[\sin (x + y) = \sin x \cos y + \cos x \sin y\]
%When the phase diameter belongs to the interval $[D(\theta(0)), \zeta]$, it is obvious that
When the phase diameter satisfies $D(\theta(t)) \le \zeta < \pi$, it is obvious that
\begin{equation*}
\sin (\theta_j -\theta_M) \le 0, \ j \in \mathcal{N}_M \quad \text{and} \quad \sin (\theta_j - \theta_m) \ge 0, \ j\in \mathcal{N}_m.
\end{equation*}
Then we see from \eqref{A-1} that
\begin{equation}\label{A-2}
\dot{D}(\theta) \le D(\Omega) + 2NK \sin \alpha .
\end{equation}
That is to say,  when $D(\theta(t)) \le \zeta$, the growth of phase diameter is no greater than the linear growth with positive slope $D(\Omega) + 2NK \sin \alpha$. Now we integrate on both sides of \eqref{A-2} from $0$ to $t$ to have
\[D(\theta(t)) \le D(\theta(0)) + (D(\Omega) + 2NK\sin \alpha)t.\]
Therefore, it yields that there exists a finite time $\bar{t}>0$ such that
\begin{equation*}
D(\theta(t)) < \zeta, \quad \forall \ t \in [0, \bar{t}),
\end{equation*}
where $\bar{t}$ is given as below
\[\bar{t} = \frac{\zeta - D(\theta(0))}{D(\Omega) + 2NK \sin \alpha}.\]
\end{proof}

\section{Strong connected case}\label{sec:3}
\setcounter{equation}{0}
We will first study the special case, i.e., the network is strongly connected. Without loss of generality, we denote by $\mathcal{G}_0$ the strong connected digraph. According to Definition \ref{node}, Lemma \ref{one_maximum} and Lemma \ref{Node decomposition}, the strong connected network consists of only one maximum node. Then in the present special case, we will show the emergence of complete synchronization. We now introduce an algorithm to construct a proper convex combination of oscillators, which can involve the dissipation from interaction of general network. More precisely, the algorithm for $\mathcal{G}_0$ consists of the following three steps:\newline

\noindent\textbf{Step 1.} For any given time $t$, we reorder the oscillator indexes to make the oscillator phases from minimum to maximum. More specifically, by relabeling the agents at time $t$, we set
\begin{equation}\label{well_order}
\theta_1^0(t) \le \theta_2^0(t) \le \ldots \le \theta_{N_0}^0(t).
\end{equation}
In order to introduce the following steps, we first provide the process of iterations for $\bar{\mathcal{L}}_k^{N_0}(\bar{C}_{k,N_0})$ and $\underline{\mathcal{L}}_1^l(\underline{C}_{1,l})$ as follows:\newline

\noindent $\bullet$($\mathcal{A}_1$): If $\bar{\mathcal{L}}_k^{N_0}(\bar{C}_{k,N_0})$ is not a general root, then we construct
\[\bar{\mathcal{L}}_{k-1}^{N_0}(\bar{C}_{k-1,N_0}) = \frac{\bar{a}_{k-1} \bar{\mathcal{L}}_k^{N_0}(\bar{C}_{k,N_0}) + \theta^0_{k-1}}{\bar{a}_{k-1} + 1}.\]

\noindent $\bullet$($\mathcal{A}_2$): If $\underline{\mathcal{L}}_1^l(\underline{C}_{1,l})$ is not a general root, then we construct
\[\underline{\mathcal{L}}_1^{l+1}(\underline{C}_{1,l+1}) = \frac{\underline{a}_{l+1} \underline{\mathcal{L}}_1^l(\underline{C}_{1,l}) + \theta^0_{l+1}}{\underline{a}_{l+1} + 1}\]

\noindent\textbf{Step 2.} According to the strong connectivity of $\mathcal{G}_0$, we immediately know that $\bar{\mathcal{L}}_1^{N_0}(\bar{C}_{1,N_0})$ is a general root, and $\bar{\mathcal{L}}_k^{N_0}(\bar{C}_{k,N_0})$ is not a general root for $k >1$. Therefore, we may start from $\theta^0_{N_0}$ and follow the process $\mathcal{A}_1$ to construct $\bar{\mathcal{L}}_k^{N_0}(\bar{C}_{k,N_0})$ until $k=1$.\newline

\noindent\textbf{Step 3.} Similarly, we know that $\underline{\mathcal{L}}_1^{N_0}(\underline{C}_{1,N_0})$ is a general root and $\underline{\mathcal{L}}_1^l(\underline{C}_{1,l})$ is not a general root for $l < N_0$. Therefore, we may start from $\theta^0_1$ and follow the process $\mathcal{A}_2$ until $l=N_0$.\newline  

It is worth emphasizing that the order of the oscillators may change along time $t$, but at each time $t$, the above algorithm does work. For convenience, the algorithm from Step 1 to Step 3 will be referred to as Algorithm $\mathcal{A}$. Then, based on Algorithm $\mathcal{A}$, we will provide a priori estimates on a monotone property about the sine function, which will be crucially used later.

\begin{lemma}\label{eta_sin_inequality}
Let $\theta_i = \{\theta^0_i\}$ be a solution to system \eqref{KuM} with srong connected network $\mathcal{G}_0$.  Moreover at time $t$, we also assume that the oscillators are well-ordered as \eqref{well_order}, and the phase diameter and quantity $\eta$ satisfiy the following conditions:
\begin{equation*}
D_0(\theta(t)) < \gamma, \quad \eta > \max \left\{\frac{1}{\sin \gamma}, \frac{2}{1 - \frac{\zeta}{\gamma}} \right\},
\end{equation*}
where $\zeta, \gamma$ are given in the condition \eqref{condition_1}.
Then at time $t$, the following assertions hold
 \begin{equation*}
 \begin{cases}
 \displaystyle \sum_{i=n}^{N_0} ( \eta^{i-n} \underset{j\le i}{\min_{j \in \mathcal{N}^0_i(0)}} \sin (\theta_j^0 - \theta_i^0)) \le \sin(\theta^0_{\bar{k}_n} - \theta^0_{N_0}), \ \bar{k}_n = \min_{j \in \cup_{i=n}^{N_0} \mathcal{N}^0_i(0)} j, \ 1\le n \le N_0, \\
 \displaystyle \sum_{i=1}^n (\eta^{n-i} \underset{j \ge i}{\max_{j \in \mathcal{N}^0_i(0)}} \sin (\theta_j^0 - \theta_i^0)) \ge \sin (\theta^0_{\underline{k}_n} - \theta_1^0), \ \underline{k}_n = \max_{j \in \cup_{i=1}^n \mathcal{N}^0_i(0)} j, \ 1 \le n \le N_0.
 \end{cases}
 \end{equation*}
\end{lemma}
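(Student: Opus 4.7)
The plan is to prove both inequalities by induction; they are structurally symmetric (the second follows by reflecting the ordering), so I sketch the first in detail and indicate the parallel at the end. The induction runs downward on $n$ from $N_0$ to $1$. Write $T_i := \min_{j \in \mathcal{N}_i^0(0),\, j \le i} \sin(\theta_j^0 - \theta_i^0)$ (with the convention that the minimum over an empty set equals $0$) and $S_n := \sum_{i=n}^{N_0} \eta^{i-n} T_i$, so that $S_n = T_n + \eta S_{n+1}$. Well-ordering and the hypothesis $D_0(\theta(t)) < \gamma < \pi$ give $T_i \le 0$ and $\sin(\theta_{\bar{k}_n}^0 - \theta_{N_0}^0) \le 0$; furthermore $\bar{k}_n \le \bar{k}_{n+1}$, since the union $\cup_{i=n}^{N_0}\mathcal{N}_i^0(0)$ only grows as $n$ decreases.

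The base case $n = N_0$ is immediate: strong connectivity forces $\mathcal{N}_{N_0}^0(0)$ to be nonempty and to contain an index smaller than $N_0$, so $\bar{k}_{N_0}$ is admissible in the minimization and $T_{N_0} \le \sin(\theta_{\bar{k}_{N_0}}^0 - \theta_{N_0}^0)$. For the inductive step, when $\bar{k}_n = \bar{k}_{n+1}$, the induction hypothesis combined with $\eta > 1$ and non-positivity gives $\eta S_{n+1} \le \eta\sin(\theta_{\bar{k}_n}^0 - \theta_{N_0}^0) \le \sin(\theta_{\bar{k}_n}^0 - \theta_{N_0}^0)$, and $T_n \le 0$ closes the step.

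The nontrivial case is $\bar{k}_n < \bar{k}_{n+1}$: the new minimizing index $\bar{k}_n$ must come from $\mathcal{N}_n^0(0)$. The key structural input from strong connectivity is that $\bar{k}_{n+1} \le n$ whenever $n < N_0$: any path in $\mathcal{G}_0$ from $\theta_1^0$ to $\theta_{N_0}^0$ contains a pair $(v_s, v_{s+1})$ with $v_s \le n$, $v_{s+1} > n$ and $v_s \in \mathcal{N}_{v_{s+1}}^0(0)$, so $\cup_{i>n}\mathcal{N}_i^0(0)$ contains an index $\le n$. Therefore $\bar{k}_n < \bar{k}_{n+1} \le n$, so $\bar{k}_n$ is admissible in the minimization defining $T_n$, yielding $T_n \le \sin(\theta_{\bar{k}_n}^0 - \theta_n^0)$. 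Setting $A := \theta_n^0 - \theta_{\bar{k}_n}^0$, $B := \theta_{N_0}^0 - \theta_{\bar{k}_{n+1}}^0$, $C := \theta_{N_0}^0 - \theta_{\bar{k}_n}^0$, one has $0 \le A, B \le C \le \gamma$ and $C - A \le B$ (from $\theta_{\bar{k}_{n+1}}^0 \le \theta_n^0$), and the inductive step reduces to proving $\sin C \le \sin A + \eta\sin B$.

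The remaining trigonometric estimate starts from the sum identity $\sin C = \sin A \cos(C-A) + \cos A \sin(C-A) \le \sin A + \sin(C-A)$ (valid since $A, C-A \in [0,\gamma] \subset [0,\pi]$ make $\sin A, \sin(C-A) \ge 0$ and $\cos A, \cos(C-A) \le 1$), together with the bound $\sin(C-A) \le (1/\sin\gamma)\sin B$: if $B \le \pi/2$ then monotonicity of $\sin$ on $[0,\pi/2]$ and $C - A \le B$ give $\sin(C-A) \le \sin B$, which is at most $(1/\sin\gamma)\sin B$ since $\sin\gamma \le 1$; if $B \in (\pi/2,\gamma]$ then $\sin B \ge \sin\gamma$ (since $\sin$ decreases on $[\pi/2,\gamma]$) while $\sin(C-A) \le 1$, again giving $\sin(C-A) \le \sin B/\sin\gamma$. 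The hypothesis $\eta > 1/\sin\gamma$ then absorbs the factor. The second inequality is proved by the mirror induction upward on $n$ from $1$ to $N_0$: the strong-connectivity consequence is now $\underline{k}_n \ge n+1$ for $n < N_0$ (from a path $\theta_{N_0}^0 \to \theta_1^0$), and the analogous reduction terminates in the same trigonometric inequality. The principal obstacle is locating the correct structural input from strong connectivity in the nontrivial case and organizing the trigonometric bound so that the single threshold $\eta > 1/\sin\gamma$ covers both the monotone and non-monotone regimes of $\sin$ on $[0,\gamma]$.
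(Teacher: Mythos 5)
Your proof is correct. The paper does not reproduce the argument (it defers to the reference \cite{Z-Z21}), but your strategy — downward induction on $n$ with the recursion $S_n = T_n + \eta S_{n+1}$, the dichotomy on whether $\bar{k}_n = \bar{k}_{n+1}$ or $\bar{k}_n < \bar{k}_{n+1}$, the strong-connectivity crossing argument giving $\bar{k}_{n+1}\le n$, and the trigonometric reduction $\sin C \le \sin A + \sin(C-A) \le \sin A + \eta\sin B$ split at $B=\pi/2$ — is exactly the mechanism that the $\eta>1/\sin\gamma$ hypothesis is calibrated for, and it is the same scheme used in that reference. All the small points check out: $\bar{k}_n$ really does lie in $\mathcal{N}_n^0(0)$ when $\bar{k}_n<\bar{k}_{n+1}$; the chain $0\le A,B\le C\le\gamma$ and $C-A\le B$ follows from $\bar{k}_n<\bar{k}_{n+1}\le n\le N_0$ and the well-ordering; and the degenerate subcases ($T_i$ over an empty set, or $\sin B=0$) are harmless with your conventions. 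The mirror argument for $\underline{k}_n$ is handled correctly by exchanging the roles of $1$ and $N_0$ and using a path from $N_0$ to $1$.
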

\begin{proof}
For the proof of this lemma, please see \cite{Z-Z21} for details.
\newline
\end{proof}

Recall the strongly connected ensemble $\mathcal{G}_0$, and denote by $\theta_i^0\  (i=1,2,\ldots,N_0)$ the members in $\mathcal{G}_0$. For the oscillators in $\mathcal{G}_0$, based on a priori estimates in Lemma \ref{eta_sin_inequality}, we will design proper coefficients of convex combination which helps us to capture the dissipation structure. Now we assume that at time $t$, the oscillators in $\mathcal{G}_0$ are well-ordered as follows, 
\begin{equation*}
\theta^0_1(t) \le \theta^0_2(t) \le \ldots \le \theta^0_{N_0}(t).
\end{equation*}
Then we apply the process $\mathcal{A}_1$ from $\theta^0_{N_0}$ to $\theta^0_1$ and the process $\mathcal{A}_2$ from $\theta^0_1$ to $\theta^0_{N_0}$ to respectively construct 
\begin{equation}\label{coeffi_0}
\begin{aligned}
&\bar{\mathcal{L}}_{k-1}^{N_0}(\bar{C}_{k-1,N_0}) \ \mbox{with} \ \bar{a}^0_{N_0} = 0, \ \bar{a}^0_{k-1} =\eta (2N_0 -k +2)(\bar{a}^0_k + 1), \quad 2 \le k \le N_0,\\
&\underline{\mathcal{L}}_1^{k+1}(\underline{C}_{1,k+1}) \ \mbox{with} \ \underline{a}^0_1 = 0, \ \underline{a}^0_{k+1} = \eta(k+1+N_0)(\underline{a}^0_k + 1), \quad 1\le k \le N_0-1,
\end{aligned}
\end{equation}
where $N_0$ is the cardinality of $\mathcal{G}_0$ and $\eta$ is given in the condition \eqref{condition_1}. By induction criteria, we can deduce explict expressions about the constructed coefficients:
\begin{equation}\label{permutation_0}
\begin{aligned}
&\bar{a}^0_{k-1} = \sum_{j=1}^{N_0 - k+1} \eta^j A(2N_0-k+2,j), \quad 2 \le k \le N_0, \\
&\underline{a}^0_{k+1} = \sum_{j=1}^k \eta^j A(k+1+ N_0,j), \quad 1 \le k \le N_0-1.
\end{aligned}
\end{equation}
Note that $\bar{a}^0_{N_0+1-i} = \underline{a}^0_i, \ i=1,2\ldots,N_0$. And we set
\begin{equation}\label{abbreviation_0}
\bar{\theta}^0_k := \bar{\mathcal{L}}_k^{N_0}(\bar{C}_{k,N_0}),\quad \underline{\theta}^0_k := \underline{\mathcal{L}}_1^k(\underline{C}_{1,k}), \quad 1 \le k \le N_0.
\end{equation}
We define a non-negative quantity $Q^0 = \bar{\theta}_0 - \underline{\theta}_0$ where $\bar{\theta}_0 = \bar{\theta}^0_1$ and $\underline{\theta}_0 = \underline{\theta}^0_{N_0}$. Note that $Q^0(t)$ is Lipschitz continuous with respect to $t$.
We then establish the comparison relation between $Q^0$ and the phase diameter $D_0(\theta)$ of  $\mathcal{G}_0$ in the following lemma.

\begin{lemma}\label{beta_QD}
Let $\theta_i=\{\theta^0_i\}$ be a solution to system \eqref{KuM} with strong connected digraph $\mathcal{G}_0$. Assume that for the group $\mathcal{G}_0$, the coefficients $\bar{a}_k^0$'s and $\underline{a}_k^0$'s satisfy the scheme \eqref{coeffi_0}. Then at each time $t$, we have the following relation
\begin{equation*}
\beta D_0(\theta(t)) \le Q^0(t) \le D_0(\theta(t)),  \quad \beta = 1 - \frac{2}{\eta},
\end{equation*}
where $\eta$ satisfies the condition \eqref{condition_1}. 
\end{lemma}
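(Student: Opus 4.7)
\textbf{Proof proposal for Lemma \ref{beta_QD}.}
The plan is to exploit the recursive definitions in \eqref{coeffi_0}, \eqref{abbreviation_0} and the ordering \eqref{well_order} directly. The upper bound is immediate: both $\bar{\theta}_0 = \bar{\mathcal{L}}_1^{N_0}(\bar{C}_{1,N_0})$ and $\underline{\theta}_0 = \underline{\mathcal{L}}_1^{N_0}(\underline{C}_{1,N_0})$ are convex combinations of the same ordered set $\{\theta_i^0\}_{i=1}^{N_0}$, so each lies in $[\theta_1^0, \theta_{N_0}^0]$, which immediately yields $Q^0 \le \theta_{N_0}^0 - \theta_1^0 = D_0(\theta)$.

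For the lower bound the idea is to show that the large coefficients $\bar{a}_{k-1}^0$ and $\underline{a}_{k+1}^0$ force $\bar{\theta}_0$ to remain within $\tfrac{1}{\eta} D_0(\theta)$ of $\theta_{N_0}^0$ and $\underline{\theta}_0$ to remain within $\tfrac{1}{\eta} D_0(\theta)$ of $\theta_1^0$. Starting from $\bar{\theta}_{N_0}^0 = \theta_{N_0}^0$ and applying the process $\mathcal{A}_1$ yields the one-step identity
\[
\bar{\theta}_k^0 - \bar{\theta}_{k-1}^0 \;=\; \frac{\bar{\theta}_k^0 - \theta_{k-1}^0}{\bar{a}_{k-1}^0 + 1} \;\le\; \frac{D_0(\theta)}{\bar{a}_{k-1}^0 + 1},
\]
and telescoping over $k = 2, \ldots, N_0$ gives
\[
\theta_{N_0}^0 - \bar{\theta}_0 \;\le\; D_0(\theta)\sum_{k=2}^{N_0} \frac{1}{\bar{a}_{k-1}^0 + 1}.
\]
An analogous identity for the process $\mathcal{A}_2$ yields $\underline{\theta}_0 - \theta_1^0 \le D_0(\theta)\sum_{k=1}^{N_0-1} 1/(\underline{a}_{k+1}^0+1)$. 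Summing these two bounds and inserting into $Q^0 = D_0(\theta) - (\theta_{N_0}^0 - \bar{\theta}_0) - (\underline{\theta}_0 - \theta_1^0)$ gives the claim $Q^0 \ge \beta D_0(\theta)$ provided each tail sum is at most $1/\eta$.

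Thus the main obstacle is verifying that
\[
\sum_{k=2}^{N_0} \frac{1}{\bar{a}_{k-1}^0 + 1} \;\le\; \frac{1}{\eta},
\]
together with its $\underline{a}$-analogue. Since the recursion \eqref{coeffi_0} yields $\bar{a}_{k-1}^0 + 1 \ge \eta(2N_0-k+2)(\bar{a}_k^0+1)$ with $\bar{a}_{N_0}^0 = 0$, I would show by induction on $k$ decreasing from $N_0$ that $\bar{a}_{k-1}^0 + 1 \ge \eta^{N_0-k+1}(N_0+2)(N_0+3)\cdots(2N_0-k+2)$. Consecutive terms of the sum then have ratio bounded by $1/(\eta(N_0+j+2)) \le 1/(4\eta)$, so the sum is dominated by a geometric series with first term $1/(\eta(N_0+2))$ and common ratio at most $1/(4\eta)$, which for $\eta > 2$ (guaranteed by the hypothesis $\eta > 2/(1-\zeta/\gamma)$) is comfortably bounded by $1/\eta$. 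The symmetry $\bar{a}_{N_0+1-i}^0 = \underline{a}_i^0$ noted after \eqref{permutation_0} immediately yields the corresponding estimate for $\underline{a}$, and the lemma follows.
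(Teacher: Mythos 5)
Your argument is correct and self-contained, whereas the paper simply delegates this lemma to the cited reference \cite{Z-Z21}. The upper bound by convexity, the one-step identity $\bar{\theta}^0_k - \bar{\theta}^0_{k-1} = (\bar{\theta}^0_k - \theta^0_{k-1})/(\bar{a}^0_{k-1}+1)$, the telescoping, and the super-geometric lower bound $\bar{a}^0_{k-1}+1 \ge \eta^{N_0-k+1}(N_0+2)\cdots(2N_0-k+2)$ obtained from the recursion \eqref{coeffi_0} are all valid; the resulting sum $\sum_{k=2}^{N_0}1/(\bar{a}^0_{k-1}+1)$ is indeed dominated by a geometric series with first term $\tfrac{1}{\eta(N_0+2)}\le \tfrac{1}{3\eta}$ and ratio at most $\tfrac{1}{4\eta}$, hence strictly less than $\tfrac{1}{\eta}$ once $\eta>2$ (which follows from $\eta > 2/(1-\zeta/\gamma)$), and the symmetry $\bar{a}^0_{N_0+1-i}=\underline{a}^0_i$ transfers the bound to the $\underline{a}$-side. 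This is almost certainly the same telescoping argument used in \cite{Z-Z21}, just written out explicitly.
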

\begin{proof}
As we choose the same design for coefficients of convex combination as that in \cite{Z-Z21}, the proof of this lemma is same as that in \cite{Z-Z21}, please see \cite{Z-Z21} for details.\newline
\end{proof}

From Lemma \ref{beta_QD}, we see that the quantity $Q^0$ can control the phase diameter $D_0(\theta)$, which play a key role in analysing the bound of phase diameter. Based on Algorithm $\mathcal{A}$ and Lemma \ref{eta_sin_inequality}, we first study the dynamics of the constructed quantity $Q^0$.

\begin{lemma}\label{dynamics_Q0}
Let $\theta_i = \{\theta^0_i\}$ be the solution to system \eqref{KuM} with strong connected digraph $\mathcal{G}_0$. Moreover, for a given sufficiently small $D^\infty < \min\left\{\frac{\pi}{2},\zeta\right\}$, assume the following conditions hold,
\begin{equation}\label{condition_3}
\begin{aligned}
&D_0(\theta(0)) < \zeta < \gamma < \pi, \quad \eta > \max \left\{\frac{1}{\sin \gamma}, \frac{2}{1 - \frac{\zeta}{\gamma}} \right\}, \\
&\tan \alpha < \frac{1}{\left(1+ \frac{\zeta}{\zeta - D(\theta(0))}\right)2N_0c} \beta D^\infty,\\
& D^\infty + \alpha < \frac{\pi}{2}, \quad K > \left(1+ \frac{\zeta}{\zeta - D_0(\theta(0)}\right) \frac{(D(\Omega) +2N_0 K\sin \alpha)c}{\cos \alpha} \frac{1}{\beta D^\infty},
\end{aligned}
\end{equation}
where $\zeta, \gamma$ are constants and 
\[c = \frac{\left(\sum_{j=1}^{N_0-1} \eta^j A(2N_0,j) + 1\right)\gamma}{\sin \gamma}.\]
Then  the phase diameter of the graph $\mathcal{G}_0$ is uniformly bounded by $\gamma$:
\[D_0(\theta(t)) < \gamma, \quad t \in [0,+\infty),\]
and the dynamics of $Q^0(t)$ is controlled by the following differential inequality
\begin{equation*}
\dot{Q}^0(t) \le D(\Omega) + 2N_0 K\sin \alpha - \frac{K\cos\alpha}{c}Q^0(t), \quad t \in [0,+\infty).
\end{equation*}
\end{lemma}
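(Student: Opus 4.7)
The plan is a continuity bootstrap centered on the auxiliary functional $Q^0$. Define $T^\ast := \sup\{T>0 : D_0(\theta(t)) < \gamma \text{ on } [0,T)\}$; since $D_0(\theta(0)) < \zeta < \gamma$ is assumed, Lemma \ref{diameter_zeta} guarantees $T^\ast > 0$. On $[0,T^\ast)$ the diameter bound makes Lemma \ref{eta_sin_inequality} applicable at every $t$, and the plan is to establish the dissipative differential inequality for $Q^0$ on this interval and then close the bootstrap by showing that the resulting bound on $Q^0$ forces $D_0(\theta) < \gamma$ strictly, contradicting maximality of $T^\ast$ unless $T^\ast = +\infty$.

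For the differential inequality, I would work on an open subinterval on which the ordering \eqref{well_order} is locally constant; the set of times at which two oscillators cross has Lebesgue measure zero and can be handled via upper Dini derivatives without affecting the bound. On such a subinterval the weights produced by \eqref{coeffi_0} are locally constant, so differentiating $\bar\theta_0 = \sum_{k=1}^{N_0} \bar c_k \theta_k^0$ using \eqref{KuM} and $\sin(x+\alpha) = \sin x\cos\alpha + \cos x\sin\alpha$ yields
\[
\dot{\bar\theta}_0 = \sum_{k=1}^{N_0} \bar c_k\Omega_k + K\cos\alpha \sum_{k=1}^{N_0} \bar c_k \!\!\sum_{j\in\mathcal{N}_k^0}\!\sin(\theta_j^0-\theta_k^0) + K\sin\alpha \sum_{k=1}^{N_0} \bar c_k \!\!\sum_{j\in\mathcal{N}_k^0}\!\cos(\theta_j^0-\theta_k^0),
\]
with the analogous expression for $\dot{\underline\theta}_0$. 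After subtraction the $\Omega$-terms contribute at most $D(\Omega)$, and the $\sin\alpha$--cosine pieces together at most $2N_0K\sin\alpha$. The core of the argument is the middle term: the recursion \eqref{coeffi_0} is engineered so that, upon reorganizing the double sum according to the block indexing in Lemma \ref{eta_sin_inequality}, the geometric growth of the $\bar a^0_k$ in $\eta$ is exactly matched by the $\eta^{i-n}$ factor in the sine monotonicity inequality. Strong connectivity of $\mathcal{G}_0$ ensures that at each level the external neighbor set is nonempty, so the telescoping collapses the weighted sum into a single term controlled by $-(\sin\gamma/\gamma)(\theta_{N_0}^0 - \bar\theta_0)/\bigl(\sum_{j=1}^{N_0-1}\eta^j A(2N_0,j) + 1\bigr)$, which is precisely $-(1/c)(\theta_{N_0}^0 - \bar\theta_0)$ after invoking the concavity bound $\sin x \le (\sin\gamma/\gamma)x$ on $[-\gamma,0]$. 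The symmetric lower bound for $\dot{\underline\theta}_0$ then produces $-(K\cos\alpha/c)Q^0$ upon subtraction.

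The main obstacle is exactly this telescoping bookkeeping: the choice $\bar a^0_{k-1} = \eta(2N_0-k+2)(\bar a^0_k+1)$ must be matched term-by-term to the weighted sine inequality, and it must be verified that the reduction produces a difference of the form $\theta_{N_0}^0 - \bar\theta_0$ (and symmetrically $\underline\theta_0 - \theta_1^0$), so that the sum is $Q^0$ rather than $D_0(\theta)$ and the dissipation is faithful to the statement. Once $\dot Q^0 \le D(\Omega) + 2N_0K\sin\alpha - (K\cos\alpha/c)Q^0$ is established on $[0,T^\ast)$, a scalar comparison with the linear ODE yields $Q^0(t) \le \max\bigl\{Q^0(0),\, c(D(\Omega)+2N_0K\sin\alpha)/(K\cos\alpha)\bigr\}$. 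The hypothesis on $K$ in \eqref{condition_3} forces the equilibrium value to be strictly less than $\beta D^\infty$, while the lower bound $\eta > 2/(1-\zeta/\gamma)$ yields $\beta\gamma > \zeta$; combined with $Q^0(0) \le D_0(\theta(0)) < \zeta$ and Lemma \ref{beta_QD}, this gives $D_0(\theta(t)) \le Q^0(t)/\beta < \gamma$ throughout $[0,T^\ast)$, contradicting $T^\ast < \infty$. Therefore $T^\ast = +\infty$ and both conclusions of the lemma follow.
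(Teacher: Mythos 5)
Your overall architecture matches the paper's: a continuity bootstrap via $T^\ast = \sup\{T : D_0(\theta) < \gamma \text{ on } [0,T)\}$, reduction to subintervals of constant ordering, splitting $\sin(x+\alpha)$ to isolate the $\cos\alpha$ dissipation from the $\sin\alpha$ perturbation, an induction on the convex-combination weights matched to Lemma \ref{eta_sin_inequality}, a scalar comparison to bound $Q^0$, and then Lemma \ref{beta_QD} plus $\beta\gamma > \zeta$ to close. All of this agrees with the paper's Appendix \ref{appendix1}.

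However, your description of what the telescoping produces is wrong in a way that matters. You claim the weighted sum collapses to a term of the form $-(1/c)(\theta_{N_0}^0 - \bar\theta_0)$ for $\dot{\bar\theta}_0$ and symmetrically $-(1/c)(\underline\theta_0 - \theta_1^0)$ for $-\dot{\underline\theta}_0$, and you frame "getting $Q^0$ rather than $D_0(\theta)$" as the main obstacle. If that were literally true, subtracting would give dissipation proportional to $(\theta_{N_0}^0 - \bar\theta_0) + (\underline\theta_0 - \theta_1^0) = D_0(\theta) - Q^0$, which is nonnegative but can be arbitrarily small when $\bar\theta_0 \approx \theta_{N_0}^0$ and $\underline\theta_0 \approx \theta_1^0$, so the argument would not close. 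What the paper's induction (eq.~\eqref{C-5} with $n=1$, then Lemma \ref{eta_sin_inequality}) actually yields is $\dot{\bar\theta}_0 \le \Omega_M + N_0K\sin\alpha + \frac{K\cos\alpha}{\bar a^0_1 + 1}\sin(\theta_1^0 - \theta_{N_0}^0)$ and the mirror bound for $\dot{\underline\theta}_0$; the dissipation argument is the \emph{full} diameter $\theta_{N_0}^0 - \theta_1^0 = D_0(\theta)$, which is then converted to a $Q^0$-dissipation using the elementary inequality $Q^0 \le D_0(\theta)$ (the easy half of Lemma \ref{beta_QD}). So your worry is reversed: getting $D_0(\theta)$ is fine, and nothing requires the telescope to land on $\theta_{N_0}^0 - \bar\theta_0$. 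Separately, you are too brief on the actual crux of the induction: the non-positivity of the cross terms (the paper's $\mathcal{I}_1, \mathcal{I}_2$), which requires a case split on whether the relevant angle exceeds $\pi/2$ together with $\eta > 1/\sin\gamma$; this is exactly where the lack of monotonicity of $\sin$ on $[0,\pi]$ is handled, and your "geometric growth matches $\eta^{i-n}$" remark does not capture it.
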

\begin{proof}
The proof is similar to \cite{Z-Z21} under the assumption that the frustration $\alpha$ is sufficiently small. However, due to the presence of frustration, there are some slight differences in the process of analysis, thus we put the detailed proof in the Appendix \ref{appendix1}.\newline
\end{proof}
Lemma \ref{dynamics_Q0} states that the phase diameter of the digraph $\mathcal{G}_0$ remains less than $\pi$ and provides the dynamics of $Q^0$. We next exploit the dynamics of $Q^0$ and find some finite time such that the phase diameter of the digraph $\mathcal{G}_0$ is uniformly bounded by a small value after the time. 
\begin{lemma}\label{diameter_0_small}
Let $\theta_i = \{\theta^0_i\}$ be a solution to system \eqref{KuM} with strong connected digraph $\mathcal{G}_0$, and suppose the assumptions in Lemma \ref{dynamics_Q0} hold. Then there exists time $t_0\ge0$ such that
\begin{equation*}
D_0(\theta(t)) \le D^\infty, \quad \mbox{for} \ t \in [t_0, +\infty),
\end{equation*}
where $t_0$ can be estimated as below and bounded by $\bar{t}$ given in Lemma \ref{diameter_zeta}
\begin{equation}\label{D-1}
t_0 < \frac{\zeta}{\frac{K\cos\alpha}{c}\beta D^\infty - (D(\Omega) + 2N_0 K \sin \alpha)} < \bar{t}.
\end{equation}
\end{lemma}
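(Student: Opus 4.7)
\medskip
\noindent\textbf{Proof proposal for Lemma \ref{diameter_0_small}.} The plan is to exploit the linear differential inequality
\[\dot{Q}^0(t) \le D(\Omega) + 2N_0 K\sin \alpha - \frac{K\cos\alpha}{c}Q^0(t)\]
provided by Lemma \ref{dynamics_Q0}, and to combine it with the sandwich $\beta D_0(\theta(t)) \le Q^0(t) \le D_0(\theta(t))$ from Lemma \ref{beta_QD}. The coupling condition in \eqref{condition_3} is precisely what is needed to force the constant term on the right to be strictly dominated by $\frac{K\cos\alpha}{c}\beta D^\infty$, so a uniform strict decay can be extracted whenever $Q^0$ stays above the target $\beta D^\infty$.

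First, I would introduce the dissipation gap
\[\delta := \frac{K\cos\alpha}{c}\beta D^\infty - \bigl(D(\Omega) + 2N_0K\sin\alpha\bigr),\]
which is positive by \eqref{condition_3} (indeed, the hypothesis on $K$ yields $\delta > \frac{\zeta}{\zeta - D_0(\theta(0))}\bigl(D(\Omega) + 2N_0K\sin\alpha\bigr) > 0$). Then, as long as $Q^0(t) \ge \beta D^\infty$, the dynamics obey $\dot{Q}^0(t) \le -\delta$. Since $Q^0(0) \le D_0(\theta(0)) < \zeta$ by Lemma \ref{beta_QD}, a direct integration shows that $Q^0$ must cross the level $\beta D^\infty$ at some finite time
\[t_0 \le \frac{\zeta - \beta D^\infty}{\delta} < \frac{\zeta}{\delta},\]
which is exactly the upper bound announced in \eqref{D-1}.

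Next I would verify that once $Q^0(t_0) \le \beta D^\infty$, the inequality $Q^0(t) \le \beta D^\infty$ persists for all later time. This is an invariance/barrier argument: at any instant where $Q^0(t) = \beta D^\infty$ we have $\dot{Q}^0(t) \le -\delta < 0$, so $Q^0$ cannot re-cross the level upward. Applying Lemma \ref{beta_QD} once more then yields $D_0(\theta(t)) \le Q^0(t)/\beta \le D^\infty$ for every $t \ge t_0$, which is the desired conclusion.

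Finally, to obtain the second inequality in \eqref{D-1}, namely $t_0 < \bar{t}$ with $\bar{t}$ furnished by Lemma \ref{diameter_zeta}, I would simply rearrange the assumption on $K$ in \eqref{condition_3}: it is equivalent to $\delta > \bigl(D(\Omega)+2N_0K\sin\alpha\bigr)\frac{\zeta}{\zeta - D_0(\theta(0))}$, and dividing both sides by $\delta\bigl(D(\Omega)+2N_0K\sin\alpha\bigr)$ gives precisely $\frac{\zeta}{\delta} < \frac{\zeta - D_0(\theta(0))}{D(\Omega)+2N_0K\sin\alpha} = \bar{t}$. Conceptually, the main (and only) obstacle is just the bookkeeping of ensuring that the algebraic inequality on $K$ in \eqref{condition_3} is strong enough to produce both a positive dissipation gap $\delta$ and the comparison $\zeta/\delta < \bar{t}$; once these are in hand, the proof is a one-line Gronwall-type barrier argument with no need for further estimates on the sine-function nonlinearities, since Lemma \ref{dynamics_Q0} has already linearized the dynamics.
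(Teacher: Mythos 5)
Your proposal is correct and follows essentially the same route as the paper: integrate the linear ODI from Lemma \ref{dynamics_Q0} to find the finite stopping time $t_0$ at which $Q^0$ falls below $\beta D^\infty$, use a barrier argument to show that sub-level set is forward-invariant, convert the $Q^0$ bound into a $D_0(\theta)$ bound via Lemma \ref{beta_QD}, and verify $\zeta/\delta < \bar{t}$ by rearranging the coupling hypothesis on $K$. The only cosmetic difference is that the paper splits explicitly into the cases $Q^0(0) > \beta D^\infty$ and $Q^0(0) \le \beta D^\infty$, whereas you absorb the second case into the bound $t_0 \le (\zeta - \beta D^\infty)/\delta$; this does not change the substance of the argument.
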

\begin{proof}
From Lemma \ref{dynamics_Q0}, we see that the dynamics of quantity $Q^0(t)$ is governed by the following inequality
\begin{equation}\label{D-a1}
\dot{Q}^0(t) \le D(\Omega) + 2N_0 K\sin \alpha - \frac{K\cos\alpha}{c} Q^0(t), \quad t \in [0,+\infty).
\end{equation}
We next show that there exists some time $t_0$ such that the quantity $Q^0$ in \eqref{D-a1} is uniformly bounded after $t_0$. There are two cases we consider separately.\newline

\noindent$\diamond$ \textbf{Case 1.} We first consider the case that $Q^0(0) > \beta D^\infty$. When $Q^0(t) \in [\beta D^\infty,Q^0(0)]$, from \eqref{C-13}, we have
\begin{equation}\label{D-2}
\begin{aligned}
\dot{Q}^0(t) &\le D(\Omega) + 2N_0 K\sin \alpha - \frac{K\cos\alpha}{c} Q^0(t) \\
&\le D(\Omega) + 2N_0 K\sin \alpha - \frac{K\cos\alpha}{c} \beta D^\infty < 0.
\end{aligned}
\end{equation}
That is to say, when $Q^0(t)$ is located in the interval $[\beta D^\infty,Q^0(0)]$, $Q^0(t)$ will keep decreasing with a rate bounded by a uniform slope. Then we can define a stopping time $t_0$ as follows,
\begin{equation*}
t_0 = \inf \{t \ge 0 \ | \ Q^0(t) \le \beta D^\infty\}.
\end{equation*}
And based on the definition of $t_0$, we see that $Q^0$ will decrease before $t_0$ and has the following property at $t_0$,
\begin{equation}\label{D-c2}
Q^0(t_0) = \beta D^\infty.
\end{equation}
Moerover, from \eqref{D-2}, it is easy to see that the stopping time $t_0$ satisfies the following upper bound estimate,
\begin{equation}\label{D-b2}
t_0 \le \frac{Q^0(0) - \beta D^\infty}{\frac{K\cos\alpha}{c}\beta D^\infty - (D(\Omega) + 2N_0K\sin \alpha)} .
\end{equation}
Now we study the upper bound of $Q^0$ on $[t_0, +\infty)$. In fact, we can apply \eqref{D-2}, \eqref{D-c2} and the same arguments in \eqref{C-14} to derive
\begin{equation}\label{D-d2}
Q^0(t) \le \beta D^\infty, \ t \in [t_0,+\infty).
\end{equation}

\noindent $\diamond$ \textbf{Case 2.} For another case that $Q^0(0) \le \beta D^\infty$. We can apply the similar analysis in \eqref{C-14} to obtain
\begin{equation}\label{D-a3}
Q^0(t) \le \beta D^\infty, \quad t \in [0,+\infty).
\end{equation}
Then in this case, we directly set $t_0=0$. \newline

Therefore, from \eqref{D-d2}, \eqref{D-a3}, and Lemma \ref{beta_QD}, we derive the upper bound of $D_0(\theta)$ on $[t_0, +\infty)$ as below
 \begin{equation}\label{D-3}
D_0(\theta(t)) \le \frac{Q^0(t)}{\beta} \le D^\infty, \quad \mbox{for} \ t \in [t_0, +\infty).
\end{equation}
On the other hand, in order to verify \eqref{D-1}, we further study $t_0$ in \eqref{D-3}. Combining \eqref{D-b2} in Case 1 and $t_0 = 0$ in Case 2, we see that
\begin{equation}\label{D-4}
t_0 < \frac{\zeta}{\frac{K\cos\alpha}{c}\beta D^\infty - (D(\Omega) + 2N_0K\sin \alpha)}.
\end{equation}
Here, we use the truth that $Q^0(0) \le D_0(\theta(0)) < \zeta$. Then from the assumption about $K$ in \eqref{condition_3}, i.e.,
\[K > \left(1+ \frac{\zeta}{\zeta - D_0(\theta(0)}\right) \frac{(D(\Omega) +2N_0 K\sin \alpha)c}{\cos \alpha} \frac{1}{\beta D^\infty}, \quad c = \frac{\left(\sum_{j=1}^{N_0-1} \eta^j A(2N_0,j) + 1\right)\gamma}{\sin \gamma},\]
it yields the following estimate about $t_0$,
\begin{equation}\label{D-5}
\begin{aligned}
t_0 &< \frac{\zeta}{(1 + \frac{\zeta}{\zeta - D_0(\theta(0))})(D(\Omega) + 2N_0K \sin \alpha) - (D(\Omega) + 2N_0K\sin \alpha)} \\
&= \frac{\zeta - D_0(\theta(0))}{D(\Omega) + 2N_0K \sin \alpha} = \bar{t}.
\end{aligned}
\end{equation}
Note that in this special strong connected case, it's clear that $N_0 = N$ and $D_0(\theta) = D(\theta)$ in Lemma \ref{diameter_zeta}. 

Thus, combining \eqref{D-3}, \eqref{D-4} and \eqref{D-5},  we derive the desired results.\newline
\end{proof}

\section{General network}\label{sec:4}
\setcounter{equation}{0}
In this section, we investigate the general network with a spanning tree structure, and prove our main result Theorem \ref{enter_small}, which states that synchronization will emerge for Kuramoto model with frustrations. According to Definition \ref{node} and Lemma \ref{one_maximum}, we see that the digraph $\mathcal{G}$ associated to system \eqref{KuM} has a unique maximum node if it contains a spanning tree structure. And From Remark \ref{w.l.o.g.}, without loss of generality,  we assume $\mathcal{G}$ is decomposed into a union as $\mathcal{G} = \bigcup_{i=0}^d \mathcal{G}_i$, where $\mathcal{G}_p$ is a maximum node of $\mathcal{G}\setminus (\bigcup_{i=0}^{p-1} \mathcal{G}_i)$.

We have studied the situation $d=0$ in Section \ref{sec:3}, and we showed that the phase diameter of the digraph $\mathcal{G}_0$ is uniformly bounded by a small value after some finite time, i.e., the oscillators of $\mathcal{G}_0$ will concentrate into a small region of quarter-circle. However, for the case that $d > 0$, $\mathcal{G}_k$'s are not maximum nodes in $\mathcal{G}$ for $k \ge 1$. Hence, the methods in Lemma \ref{dynamics_Q0} and Lemma \ref{diameter_0_small} can not be directly exploited for the situation $d>0$. More precisely, the oscillators in $\mathcal{G}_i$ with $i < k$ perform as an attraction source and affect the agents in $\mathcal{G}_k$. Thus when we study the behavior of agents in $\mathcal{G}_k$, the information from $\mathcal{G}_i$ with $i < k$ can not be ignored.

From Remark \ref{w.l.o.g.} and node decomposition, the graph $\mathcal{G}$ can be represented as 
\begin{equation*}
\mathcal{G} = \bigcup_{k=0}^d \mathcal{G}_k, \quad |\mathcal{G}_k| = N_k,
\end{equation*}
and we denote the oscillators in $\mathcal{G}_k$ by $\theta^k_i$ with $1 \le i \le N_k$. Then we assume that at time $t$, the oscillators in each $\mathcal{G}_k$ are well-ordered as below:
\begin{equation}\label{well_order_k}
\theta^k_1(t) \le \theta^k_2(t) \le \ldots \le \theta^k_{N_k}(t),  \quad 0 \le k \le d.
\end{equation}
For each subdigraph $\mathcal{G}_k$ with $k \ge 0$ which is strongly connected, we follow the process in  Algorithm $\mathcal{A}_1$ and $\mathcal{A}_2$ to construct $\bar{\mathcal{L}}_{l-1}^{N_k}(\bar{C}_{l-1,N_k})$ and $\underline{\mathcal{L}}_1^{l+1}(\underline{C}_{1,l+1})$ by redesigning the coefficients $\bar{a}^k_l$ and $\underline{a}_l^k$ of convex combination as below:

\begin{equation}\label{coeffi_k}
\begin{cases}
\displaystyle \bar{\mathcal{L}}_{l-1}^{N_k}(\bar{C}_{l-1,N_k}) \ \mbox{with} \ \bar{a}^k_{N_k} = 0, \ \bar{a}^k_{l-1} =\eta (2N -l +2)(\bar{a}^k_l + 1), \quad 2 \le l \le N_k, \\
\displaystyle \underline{\mathcal{L}}_1^{l+1}(\underline{C}_{1,l+1}) \ \mbox{with} \ \underline{a}^k_1 = 0, \ \underline{a}^k_{l+1} = \eta(l+1+2N-N_k)(\underline{a}^k_l + 1), \quad 1\le l \le N_k-1,
\end{cases}
\end{equation}
By induction principle, we deduce that
\begin{equation}\label{permutation_k}
\begin{cases}
\displaystyle \bar{a}^k_{l-1} = \sum_{j=1}^{N_k - l+1} \eta^j A(2N-l+2,j), \quad 2 \le l \le N_k, \\
\displaystyle\underline{a}^k_{l+1} = \sum_{j=1}^l \eta^j A(l+1+ 2N-N_k,j), \quad 1 \le l \le N_k-1.
\end{cases}
\end{equation}
Note that $\bar{a}^k_{N_k+1-i} = \underline{a}^k_i, \ i=1,2\ldots,N_k$. By simple calculation, we have
\begin{equation}\label{a^k_1-size}
\bar{a}^k_1 = \sum^{N_k -1}_{j=1} (\eta^j A(2N,j)),  \quad \bar{a}^k_1 \le \sum^{N -1}_{j=1} (\eta^j A(2N,j)), \quad 0 \le k\le d.
\end{equation}
And we further introduce the following notations, 
\begin{align}
&\bar{\theta}^k_l := \bar{\mathcal{L}}_l^{N_k}(\bar{C}_{l,N_k}),\quad \underline{\theta}^k_l := \underline{\mathcal{L}}_1^l(\underline{C}_{1,l}), \quad 1 \le l \le N_k, \quad 0 \le k \le d,\label{abbreviation_k}\\
&\bar{\theta}_k := \bar{\mathcal{L}}_1^{N_k}(\bar{C}_{1,N_k}), \quad \underline{\theta}_k := \underline{\mathcal{L}}_1^{N_k}(\underline{C}_{1,N_k}), \quad 0\le k \le d,\label{bar_underline_k}\\
&Q^k(t) := \max_{0 \le i\le k}\{\bar{\theta}_i\} - \min_{0\le i\le k}\{\underline{\theta}_i\},  \quad 0 \le k \le d.\label{definition_Qk}
\end{align}
Due to the analyticity of the solution, $Q^k(t)$ is Lipschitz continuous. Similar to Section \ref{sec:3}, we will first establish the comparison between the quantity $Q^k(t)$ and phase diameter $D_k(\theta(t))$ of the first $k+1$ nodes, which plays a crucial role in the later analysis.

\begin{lemma}\label{beta_QD_k}
Let $\theta_i$ be a solution to system \eqref{KuM}, and assume that the network contains a spanning tree and for each subdigraph $\mathcal{G}_k$, the coefficients $\bar{a}^k_l$ and $\underline{a}^k_l$ of convex combination in Algorithm $\mathcal{A}$ satisfy the scheme \eqref{coeffi_k}. Then at each time $t$, we have the following relation
\begin{equation*}
\beta D_k(\theta(t)) \le Q^k(t) \le D_k(\theta(t)),  \quad 0\le k \le d, \quad\beta = 1 - \frac{2}{\eta},
\end{equation*}
where $D_k(\theta) = \max\limits_{0 \le i \le k} \max\limits_{1 \le j \le N_i} \{\theta_j^i\} - \min\limits_{0 \le i \le k} \min\limits_{1 \le j \le N_i} \{\theta_j^i\}$ and $\eta$ satisfies the condition \eqref{condition_1}. 
\end{lemma}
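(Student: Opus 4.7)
The plan is to prove the two sides separately. The upper bound is immediate from convexity: each $\bar{\theta}_i$ and $\underline{\theta}_i$ is, by construction, a convex combination of $\{\theta^i_j\}_{j=1}^{N_i}$, hence lies in $[\min_j \theta^i_j, \max_j \theta^i_j]$; taking $\max_{0\le i\le k}$ on the left and $\min_{0\le i\le k}$ on the right and subtracting yields $Q^k \le D_k$.

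For the lower bound $\beta D_k \le Q^k$, I would fix indices $i_*, i_{**} \in \{0,\ldots,k\}$ such that, under the ordering \eqref{well_order_k}, $\theta^{i_*}_{N_{i_*}} = \max_{0\le i\le k}\max_j \theta^i_j$ and $\theta^{i_{**}}_1 = \min_{0\le i\le k}\min_j \theta^i_j$, so that $\theta^{i_*}_{N_{i_*}} - \theta^{i_{**}}_1 = D_k(\theta)$. Since $Q^k \ge \bar{\theta}_{i_*} - \underline{\theta}_{i_{**}}$, it suffices to establish the per-node bounds
\[
\theta^{i_*}_{N_{i_*}} - \bar{\theta}_{i_*} \le \tfrac{1}{\eta}\bigl(\theta^{i_*}_{N_{i_*}} - \theta^{i_*}_1\bigr), \qquad \underline{\theta}_{i_{**}} - \theta^{i_{**}}_1 \le \tfrac{1}{\eta}\bigl(\theta^{i_{**}}_{N_{i_{**}}} - \theta^{i_{**}}_1\bigr).
\]
Adding these and using that each individual node diameter is bounded by $D_k(\theta)$ then gives $Q^k \ge (1-\tfrac{2}{\eta}) D_k = \beta D_k$, which is the claim.

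The heart of the argument is the per-node estimate. I would exploit the recursion from Algorithm $\mathcal{A}$, which gives $\bar{\mathcal{L}}_l^{N_i} - \bar{\mathcal{L}}_{l-1}^{N_i} = (\bar{\mathcal{L}}_l^{N_i} - \theta^i_{l-1})/(\bar{a}^i_{l-1}+1)$, a non-negative quantity under \eqref{well_order_k}. Starting from $\bar{a}^i_{N_i}=0$ and using $2N-l+2\ge 2$ in the scheme \eqref{coeffi_k}, a downward induction in $l$ yields $\bar{a}^i_{l-1}+1 \ge (2\eta)^{N_i-l+1}$; telescoping then produces
\[
\theta^i_{N_i} - \bar{\theta}_i \le (\theta^i_{N_i} - \theta^i_1)\sum_{j=1}^{N_i-1}(2\eta)^{-j} \le \tfrac{1}{2\eta-1}(\theta^i_{N_i} - \theta^i_1) \le \tfrac{1}{\eta}(\theta^i_{N_i} - \theta^i_1),
\]
where the last inequality uses $\eta \ge 1$, guaranteed by $\eta > 2/(1-\zeta/\gamma)$; the symmetric bound for $\underline{\theta}_i$ follows by telescoping in the opposite direction. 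The main obstacle is bookkeeping rather than any single hard step: one must verify that the sign conventions in \eqref{well_order_k} make the telescoped differences genuinely monotone and that the geometric growth $\bar{a}^i_{l-1}+1 \ge (2\eta)^{N_i-l+1}$ holds uniformly in $i$ and $k$. Beyond this, extending the single-node Lemma \ref{beta_QD} to arbitrary $k$ requires nothing more than the extremal choice of $i_*, i_{**}$, which cleanly reduces the multi-node inequality to the already-known single-node estimate.
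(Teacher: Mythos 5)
Your proof is correct, and since the paper itself defers this lemma to reference \cite{Z-Z21} rather than giving an argument, your blind reconstruction is the natural one. The upper bound follows from convexity exactly as you say. For the lower bound, the reduction to the extremal nodes $i_*, i_{**}$ via $Q^k \ge \bar{\theta}_{i_*} - \underline{\theta}_{i_{**}}$ is right, and the per-node telescoping estimate is sound: the recursion $\bar{\mathcal{L}}_l^{N_i} - \bar{\mathcal{L}}_{l-1}^{N_i} = (\bar{\mathcal{L}}_l^{N_i} - \theta^i_{l-1})/(\bar{a}^i_{l-1}+1)$ has nonnegative numerator bounded by the node diameter, the scheme \eqref{coeffi_k} gives $\bar{a}^i_{l-1}+1 \ge 2\eta(\bar{a}^i_l+1)$ since $2N-l+2 \ge N+2 \ge 2$ (and likewise $l+1+2N-N_k \ge N+2$ on the $\underline{a}$ side), so the geometric series bound $\sum_{j\ge 1}(2\eta)^{-j} = 1/(2\eta-1) \le 1/\eta$ holds, using $\eta > 2$ from \eqref{condition_1}. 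Combining gives $Q^k \ge (1-2/\eta)D_k = \beta D_k$. The only comment worth making is that you discard the factor $N+2$ in favor of $2$; this is harmless since the target $\beta = 1 - 2/\eta$ is what you recover after weakening $1/(2\eta-1)$ to $1/\eta$, so you actually prove a marginally stronger intermediate bound and then relax it to match the statement.
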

\begin{proof}
As we adopt the same construction of coefficients of convex combination in \cite{Z-Z21} which deals with the Kuramoto model without frustration on a general network, thus for the detailed proof of this lemma, please see \cite{Z-Z21}.
\end{proof}

Now we are ready to prove our main Theorem \ref{enter_small}. To this end, we will follow similar arguments in Section \ref{sec:3} to complete the proof. Actually, we will investigate the dynamics of the constructed quantity $Q^k(t)$ that involves the influences from $\mathcal{G}_i$ with $i <k$, which yields the hypo-coercivity of the phase diameter. Applying similar arguments in Lemma \ref{dynamics_Q0} and Lemma \ref{diameter_0_small}, we have the following estimates for the first maximal node $\mathcal{G}_0$.

\begin{lemma}\label{L4.2}
Suppose that the network topology contains a spanning tree, and let $\theta_i$ be a solution to \eqref{KuM}. Moreover, assume that the initial data and the quantity $\eta$ satisfy
 \begin{equation}\label{condition_11}
 D(\theta(0)) < \zeta < \gamma < \pi, \quad \eta > \max \left\{\frac{1}{\sin \gamma}, \frac{2}{1 - \frac{\zeta}{\gamma}} \right\},
 \end{equation}
 where $\zeta, \gamma$ are positive constants. And for a given sufficiently small $D^\infty < \min\{\frac{\pi}{2},\zeta\}$, assume the frustration $\alpha$ and coupling strength $\kappa$ satisfy
\begin{equation}\label{condition_21}
\begin{aligned}
&\tan \alpha < \frac{1}{\left(1+ \frac{(d+1)\zeta}{\zeta - D(\theta(0))}\right)2Nc} \frac{\beta^{d+1}D^\infty}{[4(2N+1)c]^d}, \quad D^\infty + \alpha < \frac{\pi}{2},\\
&K > \left(1+ \frac{(d+1)\zeta}{\zeta - D(\theta(0))}\right) \frac{(D(\Omega) + 2NK \sin \alpha)c}{\cos \alpha} \frac{[4(2N+1)c]^d}{\beta^{d+1}D^\infty},
\end{aligned}
\end{equation}
 where $d$ is the number of general nodes and
\[c = \frac{(\sum_{j=1}^{N-1} \eta^j A(2N,j) + 1)\gamma}{\sin \gamma}.\]
Then the following two assertions hold for the maximum node $\mathcal{G}_0$:
\begin{enumerate}
\item  The dynamics of $Q^0(t)$ is governed by the following equation
\begin{equation*}
\dot{Q}^0(t) \le D(\Omega) + 2N K\sin \alpha - \frac{K\cos\alpha}{c} Q^0(t), \quad t \in [0,+\infty),
\end{equation*}
\item there exists time $t_0 \ge 0$ such that
\begin{equation*}
D_0(\theta(t)) \le \frac{\beta^d D^\infty}{[4(2N+1)c]^d}, \quad \mbox{for} \ t \in [t_0, +\infty),
\end{equation*}
where $t_0$ can be estimated as below and bounded by $\bar{t}$ given in Lemma \ref{diameter_zeta}
\begin{equation*}
t_0 < \frac{\zeta}{\frac{K\cos\alpha}{c}\frac{\beta^{d+1} D^\infty}{[4(2N+1)c]^d} - (D(\Omega) + 2N K \sin \alpha)} < \bar{t}.
\end{equation*}
\end{enumerate}
 \end{lemma}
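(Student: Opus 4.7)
The plan is to observe that the first maximal node $\mathcal{G}_0$ is insensitive to the rest of the digraph: by Definition \ref{node}, no vertex in $\mathcal{G}_0$ has neighbors outside $\mathcal{G}_0$, so the oscillators $\{\theta^0_i\}_{i=1}^{N_0}$ satisfy a closed ODE system of exactly the form \eqref{KuM} restricted to the strongly connected ensemble $\mathcal{G}_0$. Consequently the present lemma reduces to a direct application of Lemmas \ref{dynamics_Q0} and \ref{diameter_0_small} from Section \ref{sec:3}, with the target diameter taken to be the much smaller quantity $\tilde{D}^\infty := \frac{\beta^d D^\infty}{[4(2N+1)c]^d}$ rather than $D^\infty$ itself.

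For assertion (1), I would replay the derivation of Lemma \ref{dynamics_Q0} verbatim for $\mathcal{G}_0$, using the trivial bound $N_0 \le N$ in the frustration term $2N_0 K\sin\alpha \le 2NK\sin\alpha$. This yields
\begin{equation*}
\dot{Q}^0(t) \le D(\Omega) + 2NK\sin\alpha - \frac{K\cos\alpha}{c} Q^0(t),
\end{equation*}
as required, provided the hypotheses of Lemma \ref{dynamics_Q0} hold on $\mathcal{G}_0$ with $\tilde D^\infty$ in place of $D^\infty$. The verification of these hypotheses is a direct substitution: the required bound $\tan\alpha < \frac{\beta \tilde{D}^\infty}{(1+\frac{\zeta}{\zeta-D(\theta(0))})2Nc}$ equals $\frac{\beta^{d+1}D^\infty}{(1+\frac{\zeta}{\zeta-D(\theta(0))})2Nc\,[4(2N+1)c]^d}$, which is weaker than the assumption in \eqref{condition_21} because $1+\frac{(d+1)\zeta}{\zeta-D(\theta(0))} \ge 1+\frac{\zeta}{\zeta-D(\theta(0))}$; the corresponding lower bound on $K$ is checked identically. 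The condition $\tilde D^\infty + \alpha < \pi/2$ follows from $D^\infty + \alpha < \pi/2$ since $\tilde D^\infty \le D^\infty$.

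For assertion (2), I would apply Lemma \ref{diameter_0_small} with target $\tilde D^\infty$. This gives the existence of $t_0$ with $D_0(\theta(t)) \le \tilde D^\infty = \frac{\beta^d D^\infty}{[4(2N+1)c]^d}$ for every $t \ge t_0$, together with the bound
\begin{equation*}
t_0 \;<\; \frac{\zeta}{\frac{K\cos\alpha}{c}\beta\tilde D^\infty - (D(\Omega)+2NK\sin\alpha)} \;=\; \frac{\zeta}{\frac{K\cos\alpha}{c}\frac{\beta^{d+1}D^\infty}{[4(2N+1)c]^d} - (D(\Omega)+2NK\sin\alpha)}.
\end{equation*}
To upgrade this to $t_0 < \bar{t}$, I plug the lower bound on $K$ from \eqref{condition_21} into the denominator: the stronger prefactor $1+\frac{(d+1)\zeta}{\zeta-D(\theta(0))}$ forces the denominator to exceed $\frac{(d+1)\zeta}{\zeta-D(\theta(0))}(D(\Omega)+2NK\sin\alpha)$, so the ratio is at most $\frac{\zeta-D(\theta(0))}{(d+1)(D(\Omega)+2NK\sin\alpha)} \le \frac{\zeta-D(\theta(0))}{D(\Omega)+2NK\sin\alpha} = \bar{t}$.

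The proof is essentially bookkeeping rather than conceptual work, and the only delicate point is tracking why the inflated prefactor $1+\frac{(d+1)\zeta}{\zeta-D(\theta(0))}$ (with the factor $d+1$ rather than $1$) and the deflated target $\tilde D^\infty = \frac{\beta^d D^\infty}{[4(2N+1)c]^d}$ appear at this stage. They are reserves prepared in advance: only the base case $\mathcal{G}_0$ is handled here, but subsequent induction on the node decomposition $\mathcal{G} = \bigcup_{i=0}^d \mathcal{G}_i$ will lose a factor of $\beta/[4(2N+1)c]$ at each level and consume a factor of $\zeta/(\zeta-D(\theta(0)))$ in the time budget at each level, so starting $\mathcal{G}_0$ with this stronger bound precisely leaves room for all $d$ subsequent inductive steps.
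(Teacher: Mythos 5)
Your proposal is essentially correct and takes the same route the paper implicitly intends (the paper offers no proof of Lemma~\ref{L4.2}, saying only ``applying similar arguments in Lemma~\ref{dynamics_Q0} and Lemma~\ref{diameter_0_small}''). The observation that $\mathcal{G}_0$ is a maximum node and hence a closed subsystem, combined with the substitution $\tilde{D}^\infty = \frac{\beta^d D^\infty}{[4(2N+1)c]^d}$ and the slack built into the $(d+1)$ prefactor, is exactly the bookkeeping needed, and your verification that \eqref{condition_21} implies the hypotheses of Lemma~\ref{dynamics_Q0} with $\tilde{D}^\infty$ in place of $D^\infty$ and the final upgrade $t_0 < \bar{t}$ are both checked correctly.

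One point deserves more care than your phrase ``direct application'' suggests, and you oscillate between ``direct application'' and ``replay the derivation verbatim'' as if they were the same. They are not quite: the quantity $Q^0$ in Lemma~\ref{L4.2} is built from the convex-combination coefficients in \eqref{coeffi_k} with $k=0$, which use $2N$ (so $\bar{a}^0_{l-1}=\eta(2N-l+2)(\bar{a}^0_l+1)$), whereas the $Q^0$ of Lemma~\ref{dynamics_Q0} uses \eqref{coeffi_0} with $2N_0$. When $N_0 < N$ these produce numerically different convex combinations, and correspondingly the constant $c$ in Lemma~\ref{dynamics_Q0} is $\frac{(\sum_{j=1}^{N_0-1}\eta^j A(2N_0,j)+1)\gamma}{\sin\gamma}$ while Lemma~\ref{L4.2} uses $\frac{(\sum_{j=1}^{N-1}\eta^j A(2N,j)+1)\gamma}{\sin\gamma}$. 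So you cannot literally invoke Lemma~\ref{dynamics_Q0}; you must rerun its proof with the $2N$-based coefficients, then use the bound $\bar{a}^0_1 = \sum_{j=1}^{N_0-1}\eta^j A(2N,j) \le \sum_{j=1}^{N-1}\eta^j A(2N,j)$ from \eqref{a^k_1-size} to obtain the coercivity constant $\frac{1}{c}$ with the larger $c$, and invoke Lemma~\ref{beta_QD_k} (not Lemma~\ref{beta_QD}) for the comparison $\beta D_0 \le Q^0 \le D_0$. All of the structural lemmas (nonpositivity of the $\mathcal{I}$-terms, the $\frac{\sin\gamma}{\gamma}$ estimate) depend only on $\eta$ and $\gamma$, so the rerun goes through without change, but the reader needs to be told this explicitly rather than being pointed at a lemma whose conclusion, read literally, concerns a different quantity.
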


Next, inspiring from Lemma \ref{L4.2}, we make the following reasonable ansatz for $Q^k(t)$ with $0 \le k \le d$.\newline 

\noindent \textbf{Ansatz:}
\begin{enumerate}
\item The dynamics of quantity $Q^k(t)$ in time interval $[0,\infty)$ is governed by the following differential inequality,
\begin{equation}\label{F-1}
\begin{aligned}
\dot{Q}^k(t) &\le D(\Omega) +2N K\sin \alpha + (2N+1)K\cos \alpha D_{k-1}(\theta(t)) -\frac{K\cos\alpha}{c} Q^k(t), \  t \in [0,+\infty).
\end{aligned}
\end{equation}
where we assume $D_{-1}(\theta(t)) = 0$.

\item there exists a finite time $t_k \ge 0$ such that, the phase diameter $D_k(\theta(t))$ of $\bigcup_{i=0}^k \mathcal{G}_i$ is uniformly bounded after $t_k$, i.e.,
\begin{equation}\label{F-2}
D_k(\theta(t)) \le \frac{\beta^{d-k}D^\infty}{[4(2N+1)c]^{d-k}}, \quad \forall \ t\in [t_k, +\infty),
\end{equation}
where $t_k$ subjects to the following estimate,
\begin{equation}\label{F-3}
t_k < \frac{(k+1)\zeta}{\frac{K\cos\alpha}{c} \frac{\beta^{d+1}D^\infty}{[4(2N+1)c]^{d}} - (D(\Omega) + 2NK\sin \alpha )} < \bar{t} = \frac{\zeta - D(\theta(0))}{D(\Omega) + 2NK\sin \alpha }.
\end{equation}
\end{enumerate}

In the subsequence, we will split the proof of the ansatz into two lemmas by induction criteria. More precisely, based on the results in Lemma \ref{L4.2} as the initial step, we suppose the ansatz holds for $ Q^k$ and $D_k(\theta)$ with $0 \le k \le d-1$, and then prove that the ansatz also holds for $Q^{k+1}$ and $D_{k+1}(\theta)$.

\begin{lemma}\label{L4.3}
Suppose the assumptions in Lemma \ref{L4.2} are fulfilled, and the ansatz in \eqref{F-1}, \eqref{F-2} and \eqref{F-3} holds for some $k$ with $0 \le k \le d-1$. Then the ansatz  \eqref{F-1} holds for $k+1$.
\end{lemma}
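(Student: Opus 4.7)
The plan is to establish the differential inequality \eqref{F-1} for index $k+1$ by direct computation of $\dot{Q}^{k+1}(t)$, modeled on the proof of Lemma \ref{dynamics_Q0} (the base case $k=0$) with additional bookkeeping for the attraction emanating from the lower-level nodes $\mathcal{G}_0,\dots,\mathcal{G}_k$. At a fixed $t$, I relabel so that the well-ordering \eqref{well_order_k} holds in every subdigraph and construct $\bar{\theta}_l^{k+1}, \underline{\theta}_l^{k+1}$ via scheme \eqref{coeffi_k}, producing $\bar{\theta}_{k+1}, \underline{\theta}_{k+1}$ as in \eqref{bar_underline_k}. Since $Q^{k+1}$ is Lipschitz, it is differentiable almost everywhere; at such $t$ there exist indices $p,q \in \{0,\dots,k+1\}$ realizing the extrema in \eqref{definition_Qk}, so that $\dot{Q}^{k+1}(t) = \dot{\bar{\theta}}_p(t) - \dot{\underline{\theta}}_q(t)$.

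I split the analysis into three cases according to whether $p$ or $q$ equals $k+1$. If both $p,q \le k$, then $Q^{k+1}(t) = Q^k(t)$ at this instant, and the inductive hypothesis \eqref{F-1} for $Q^k$, combined with $D_{k-1}(\theta)\le D_k(\theta)$, yields the required bound immediately. In the mixed case (exactly one of $p,q$ is $k+1$), the side with index $\le k$ is controlled by the induction hypothesis, since the hierarchical structure of the node decomposition ensures that the dynamics of $\bar{\theta}_i, \underline{\theta}_i$ with $i \le k$ is completely independent of $\mathcal{G}_{k+1}$; the other side is handled by the computation described below. The remaining case $p=q=k+1$ is the heart of the argument.

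For the case $p=q=k+1$, I expand $\dot{\bar{\theta}}_{k+1} = \sum_l \bar{c}_l \dot{\theta}_l^{k+1}$ using \eqref{KuM}, where $\bar{c}_l$ are the convex combination coefficients determined by \eqref{coeffi_k}, and split each sine via $\sin(x+\alpha) = \sin x\cos\alpha + \cos x\sin\alpha$; similarly for $\dot{\underline{\theta}}_{k+1}$. The natural-frequency terms contribute at most $D(\Omega)$; the $\sin\alpha$-terms, bounded in absolute value by $|\cos(\cdot)|\le 1$ per neighbor and summed over at most $N$ neighbors per extremal average, contribute at most $2NK\sin\alpha$. For the $\cos\alpha\cdot\sin(\theta_j - \theta_l^{k+1})$ portion, I decompose the neighbor set $\mathcal{N}_l^{k+1}$ into the internal neighbors $\mathcal{N}_l^{k+1}(k+1)\subset\mathcal{G}_{k+1}$ and the external neighbors $\bigcup_{i=0}^{k}\mathcal{N}_l^{k+1}(i)$. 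The internal contribution, via Lemma \ref{eta_sin_inequality} applied with the coefficients from \eqref{coeffi_k} exactly as in the proof of Lemma \ref{dynamics_Q0}, yields the dissipation $-\frac{K\cos\alpha}{c}Q^{k+1}(t)$ once the symmetric estimate for $\underline{\theta}_{k+1}$ is added.

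The main obstacle is controlling the external contribution so that it scales linearly with $D_k(\theta(t))$, producing the coefficient $(2N+1)K\cos\alpha$ in \eqref{F-1} rather than a constant. The idea is that for each external neighbor $\theta_j \in \mathcal{G}_i$ with $i \le k$, one writes $\sin(\theta_j - \theta_l^{k+1}) = \sin(\theta_\star - \theta_l^{k+1}) + [\sin(\theta_j - \theta_l^{k+1}) - \sin(\theta_\star - \theta_l^{k+1})]$ for a convenient reference point $\theta_\star\in\bigcup_{i\le k}\mathcal{G}_i$; the reference parts attached to $\dot{\bar{\theta}}_{k+1}$ and $\dot{\underline{\theta}}_{k+1}$ either cancel or are dominated by the already-extracted internal dissipation (since $\bar{\theta}_{k+1}\ge\bar{\theta}_i$ forces the sine of the reference difference to have the favourable sign), while the residual is controlled by $|\theta_j - \theta_\star|\le D_k(\theta(t))$ via the mean-value inequality; summing these residuals across at most $2N$ external arcs entering $\bar{\theta}_{k+1}$ and $\underline{\theta}_{k+1}$, together with a unit safety factor to absorb the diameter of the coefficient schemes \eqref{permutation_k}, produces the $(2N+1)$ prefactor. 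The frustration enters only as the multiplicative factor $\cos\alpha$ on the dissipation and the additive $2NK\sin\alpha$ term, both compatible with the smallness hypothesis \eqref{condition_21}. Adding all four contributions yields \eqref{F-1} at index $k+1$, completing the inductive step.
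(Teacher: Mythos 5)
The overall computational strategy you outline—split $\dot{Q}^{k+1}$ by which node realizes the extrema in \eqref{definition_Qk}, then in the critical case expand $\dot{\bar{\theta}}_{k+1}$ and $\dot{\underline{\theta}}_{k+1}$ via the convex-combination coefficients, decompose the coupling into internal $\mathcal{G}_{k+1}$-neighbors (yielding coercivity via Lemma~\ref{eta_sin_inequality}) and external neighbors in $\bigcup_{i\le k}\mathcal{G}_i$ (contributing $O(D_k(\theta))$)—matches the paper's approach. Your three cases correspond to the four cases the paper depicts in Figure~\ref{Fig1}; the paper's Case~2 is your $p=q=k+1$ situation, and the paper only details that one as well.

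However, there is a structural gap that undermines the whole argument as written: you never establish the a~priori bound $D_{k+1}(\theta(t)) < \gamma$ on the interval over which you perform the calculation, and you offer no mechanism to extend the resulting differential inequality to all of $[0,\infty)$. Every crucial step in the derivation of \eqref{F-1}—the applicability of Lemma~\ref{eta_sin_inequality}, the sign observations used to discard the unfavorable internal terms (your $\mathcal{I}_{21}$-type estimates), and the conversion $\sin(\theta^{k+1}_{N_{k+1}} - \theta^{k+1}_1)\ge \frac{\sin\gamma}{\gamma}(\theta^{k+1}_{N_{k+1}}-\theta^{k+1}_1)$—is valid \emph{only while} the phase diameter $D_{k+1}(\theta)$ stays strictly below $\gamma$. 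The paper handles this via a bootstrap/continuation argument: it sets $T^* = \sup\{T>0 : D_{k+1}(\theta(t)) < \gamma \text{ on } [0,T)\}$, derives the differential inequality conditionally on $[0,T^*)$, then uses that very inequality (together with the comparison Lemma~\ref{beta_QD_k} and the size of $K$, $\alpha$) to show $D_{k+1}(\theta(T^*)) \le \zeta/\beta < \gamma$, contradicting $T^*<\infty$. Without this, your computation proves a local statement that cannot be globalized, and the conclusion "ansatz~\eqref{F-1} holds for $k+1$" (which is asserted for $t\in[0,\infty)$) does not follow.

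Two secondary points. First, in the mixed case you assert the side with index $\le k$ "is controlled by the induction hypothesis"; but \eqref{F-1} for $Q^k$ bounds the combination $\dot{\bar{\theta}}_{\max}-\dot{\underline{\theta}}_{\min}$ over the first $k+1$ nodes, not the single derivative $\dot{\bar{\theta}}_p$ or $\dot{\underline{\theta}}_q$ that appears in your decomposition, so you would actually need to rerun a version of the internal estimate for that side rather than invoke the hypothesis directly. Second, your "reference point plus mean-value" treatment of the external neighbors is in the right spirit but vaguer than what is needed: the paper instead argues by sign—if the external $\theta_j^l$ lies below the relevant extremal phase of $\mathcal{G}_{k+1}$ the sine is nonpositive, while if it lies above, one uses $\theta^{k+1}_{N_{k+1}}\ge\min_{0\le i\le k}\min_j\theta^i_j$ (from the Case-2 ordering and \eqref{F-d7}) plus $\sin x\le x$ to bound the term by $D_k(\theta)$. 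You should make this sign dichotomy explicit rather than rely on the somewhat optimistic claim that the "reference parts either cancel or are dominated."
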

\begin{proof}
We will use proof by contradiction criteria to verify the ansatz for $Q^{k+1}$. To this end, define a set
\begin{equation*}
\mathcal{B}_{k+1} = \{T>0 \  : \ D_{k+1}(\theta(t)) < \gamma, \ \forall \ t \in [0,T)\}.
\end{equation*}
From Lemma \ref{diameter_zeta}, we see that
\begin{equation*}
D_{k+1}(\theta(t)) \le D(\theta(t)) < \zeta < \gamma, \quad \forall \ t \in [0, \bar{t}).
\end{equation*}
It is clear that $\bar{t} \in \mathcal{B}_{k+1}$. Thus the set $\mathcal{B}_{k+1}$ is not empty. Define $T^* = \sup \mathcal{B}_{k+1}$. We will  prove by contradiction that $T^* = +\infty$. Suppose not, i.e., $T^* < +\infty$. It is obvious that
\begin{equation}\label{F-4}
\bar{t} \le T^*, \quad D_{k+1}(\theta(t)) < \gamma, \ \forall \ t \in [0,T^*), \quad D_{k+1}(\theta(T^*)) = \gamma.
\end{equation}

As the solution to system \eqref{KuM} is analytic, in the finite time interval $[0,T^*)$, $\bar{\theta}_i$ and $\bar{\theta}_j$ either collide finite times or always stay together. Similar to the analysis in Lemma \ref{dynamics_Q0}, without loss of generality, we only consider the situation that there is no pair of $\bar{\theta}_i$ and $\bar{\theta}_j$  staying together through all period $[0,T^*)$. That means the order of $\{\bar{\theta}_i\}_{i=0}^{k+1}$ will only exchange finite times in $[0,T^*)$, so does $\{\underline{\theta}_i\}_{i=0}^{k+1}$ . Thus, we divide the time interval $[0,T^*)$ into a finite union as below
\[[0,T^*) = \bigcup_{l=1}^r J_l, \quad J_l = [t_{l-1},t_l).\]
such that in each interval $J_l$, the orders of both $\{\bar{\theta}_i\}_{i=0}^{k+1}$ and $\{\underline{\theta}_i\}_{i=0}^{k+1}$ are preseved, and the order of oscillators in each subdigraph $\mathcal{G}_i$ with $0 \le i \le k+1$ does not change. In the following, we will show the contradiction via two steps.\newline

\noindent $\star$ \textbf{Step 1.} In this step, we first verify the Ansatz \eqref{F-1} holds for $Q^{k+1}$ on $[0, T^*)$, i.e.,
\begin{equation}\label{F-a20}
\begin{aligned}
\dot{Q}^{k+1}(t) &\le D(\Omega) + 2NK\sin \alpha + (2N + 1)K \cos \alpha D_k(\theta(t)) -\frac{K\cos\alpha}{c}Q^{k+1}(t), \  \ t \in [0,T^*).
\end{aligned}
\end{equation}
As the proof is slightly different from that in \cite{Z-Z21} and rather lengthy, we put the detailed proof in Appendix \ref{appendix2}. \newline

\noindent $\star$ \textbf{Step 2.} In this step, we will study the upper bound of $Q^{k+1}$ in \eqref{F-a20} in time interval $[t_k,T^*)$, where $t_k$ is given in Ansatz $\eqref{F-2}$ for $D_k(\theta)$. For the purposes of discussion, we rewrite the equation \eqref{F-a20} as below
\begin{equation}\label{F-d20}
\begin{aligned}
\dot{Q}^{k+1}(t) &\le -\frac{K\cos\alpha}{c} \left(Q^{k+1}(t) - (2N+1)cD_k(\theta(t)) - \frac{(D(\Omega)+2NK\sin\alpha)c}{K\cos\alpha}\right), \ t \in [0,T^*).
\end{aligned}
\end{equation}
where $c$ is expressed by the following equation
\begin{equation}\label{F-dd20}
c = \frac{(\sum_{j=1}^{N-1} \eta^j A(2N,j) + 1)\gamma }{\sin \gamma}.
\end{equation}
For the term $D_k(\theta)$ in \eqref{F-d20}, under the assumption of induction criteria, the Ansatz \eqref{F-2} holds for $D_k(\theta)$, i.e., there exists time $t_k$ such that
\begin{equation}\label{F-21}
 D_k(\theta(t)) \le \frac{\beta^{d-k} D^\infty}{[4(2N+1)c]^{d-k}}, \quad \ t \in [t_k, +\infty), \quad t_k < \bar{t}.
\end{equation}
And from the condition \eqref{condition_21}, it is obvious that
\begin{equation*}
\begin{aligned}
K &> \left(1+ \frac{(d+1)\zeta}{\zeta - D(\theta(0))}\right) \frac{(D(\Omega) + 2NK \sin \alpha)c}{\cos \alpha} \frac{[4(2N+1)c]^d}{\beta^{d+1}D^\infty}\\
&> \frac{(D(\Omega) + 2NK \sin \alpha)c}{\cos \alpha} \frac{[4(2N+1)c]^d}{\beta^{d+1}D^\infty}.
\end{aligned}
\end{equation*}
This directly yields that
\begin{equation}\label{F-22}
\frac{(D(\Omega)+2NK\sin\alpha)c}{K\cos\alpha} < \frac{\beta^{d+1}D^\infty}{[4(2N+1)c]^d} < \frac{\beta^{d-k}D^\infty}{4^{d-k}[(2N+1)c]^{d-k-1}}, 
\end{equation}
where $0\le k\le d-1, \beta< 1, c>1.$
Then for the purposes of analysing the last two terms in the bracket of \eqref{F-d20}, we add the esimates in \eqref{F-21} and \eqref{F-22} to get
\begin{equation}\label{F-23}
\begin{aligned}
&(2N+1)cD_k(\theta(t)) + \frac{(D(\Omega) +2NK\sin\alpha)c}{K\cos\alpha} \\
&\le (2N+1)c\frac{\beta^{d-k} D^\infty}{[4(2N+1)c]^{d-k}} + \frac{\beta^{d-k}D^\infty}{4^{d-k}[(2N+1)c]^{d-k-1}}\\
&\le \frac{\beta^{d-k}D^\infty}{2[4(2N+1)c]^{d-k-1}} < \frac{\beta^{d-k}D^\infty}{[4(2N+1)c]^{d-k-1}},\qquad t \in [t_k, +\infty).
\end{aligned}
\end{equation}
From Lemma \ref{diameter_zeta}, we have $t_k < \bar{t} \le T^*$, thus it makes sense when we consider the time interval $[t_k, T^*)$. Now based on the above estiamte \eqref{F-23}, we apply the differential equation \eqref{F-d20} and study the upper bound of $Q^{k+1}$ on $[t_k, T^*)$. We claim that
\begin{equation}\label{F-a23}
Q^{k+1}(t) \le \max \left\{Q^{k+1}(t_k), \frac{\beta^{d-k}D^\infty}{ [4(2N+1)c]^{d-k-1}} \right\} := M_{k+1}, \quad t \in [t_k, T^*).
\end{equation}
Suppose not, then there exists some $\tilde{t} \in (t_k, T^*)$ such that $Q^{k+1}(\tilde{t}) > M_{k+1}$. We construct a set
\[\mathcal{C}_{k+1} := \{t_k \le t < \tilde{t} : Q^{k+1}(t) \le M_{k+1}\}.\]
Since $Q^{k+1}(t_k) \le M_{k+1}$, the set $\mathcal{C}_{k+1}$ is not empty. Define $t^* = \sup \mathcal{C}_{k+1}$. Then it is easy to see that
\begin{equation}\label{F-b23}
t^* < \tilde{t}, \quad Q^{k+1}(t^*)=M_{k+1}, \quad Q^{k+1}(t) > M_{k+1} \quad \mbox{for} \ t \in (t^*, \tilde{t}].
\end{equation}
From the construction of $M_{k+1}$, \eqref{F-23} and \eqref{F-b23}, it is clear that for $t \in (t^*, \tilde{t}]$
\begin{equation*}
\begin{aligned}
&-\frac{K\cos\alpha}{c} \left(Q^{k+1}(t) - (2N+1)cD_k(\theta(t)) - \frac{(D(\Omega)+2NK\sin\alpha)c}{K\cos\alpha}\right)\\
&< -\frac{K\cos\alpha}{c}\left(M_{k+1} - \frac{\beta^{d-k}D^\infty}{[4(2N+1)c]^{d-k-1}}\right) \le 0.
\end{aligned}
\end{equation*}
Wen apply the above inequality and integrate on both sides of $\eqref{F-d20}$ from $t^*$ to $\tilde{t}$ to get
\begin{equation*}
\begin{aligned}
& Q^{k+1}(\tilde{t}) - M_{k+1} \\
&= Q^{k+1}(\tilde{t}) - Q^{k+1}(t^*) \\
&\le -\int_{t^*}^{\tilde{t}} \frac{K\cos\alpha}{c} \left(Q^{k+1}(t) - (2N+1)cD_k(\theta(t)) - \frac{(D(\Omega)+2NK\sin\alpha)c}{K\cos\alpha}\right) dt < 0
\end{aligned}
\end{equation*}
which contradicts to the truth $Q^{k+1}(\tilde{t}) - M_{k+1} >0$. Thus we complete the proof of \eqref{F-a23}.\newline

\noindent $\star$ \textbf{Step 3.} In this step, we will construct a contradiction to \eqref{F-4}.
From \eqref{F-a23}, Lemma \ref{diameter_zeta} and the fact that
\begin{equation*}
\frac{\beta^{d-k}D^\infty}{ [4(2N+1)c]^{d-k-1}} < D^\infty, \quad t_k < \bar{t}, \quad Q^{k+1}(t_k) \le D_{k+1}(\theta(t_k)) \le D(\theta(t_k)) < \zeta,
\end{equation*}
we directly obtain
\begin{equation*}
Q^{k+1}(t) \le \max \left\{Q^{k+1}(t_k), \frac{\beta^{d-k}D^\infty}{ [4(2N+1)c]^{d-k-1}} \right\} < \max\left\{\zeta,D^\infty\right\} = \zeta, \quad t\in [t_k,T^*).
\end{equation*}
From Lemma \ref{beta_QD_k} and the condition \eqref{condition_11}, it yields that
\begin{equation*}
D_{k+1}(\theta(t)) \le \frac{Q^{k+1}(t)}{\beta} < \frac{\zeta}{\beta} < \gamma, \quad t \in [t_k, T^*).
\end{equation*}
Since $D_{k+1}(\theta(t))$ is continuous, we have
\begin{equation*}
D_{k+1}(\theta(T^*)) = \lim_{t \to (T^*)^-} D_{k+1}(\theta(t)) \le \frac{\zeta}{\beta} < \gamma,
\end{equation*}
which obviously contradicts to the assumption $D_{k+1}(\theta(T^*)) = \gamma$ in \eqref{F-4}. \newline

Thus, we combine all above analysis to conclude that $T^* = +\infty$, that is to say,
\begin{equation}\label{F-c23}
D_{k+1}(\theta(t)) < \gamma, \quad \forall \ t \in [0, +\infty).
\end{equation}
Then for any finite time $T>0$, we apply \eqref{F-c23} and repeat the analysis in Step 1 to obtain that the differential inequality \eqref{F-1} holds for $Q^{k+1}$ on $[0,T)$. Thus we obtain the dynamics of $Q^{k+1}$ in whole time interval as below:
\begin{equation}\label{F-24}
\begin{aligned}
\dot{Q}^{k+1}(t) &\le D(\Omega) + 2NK\sin \alpha + (2N + 1)K \cos \alpha D_k(\theta(t)) -  \frac{K\cos\alpha}{c} Q^{k+1}(t),\ t \in [0,+\infty).
\end{aligned}
\end{equation}

Therefore, we complete the proof of the Ansatz \eqref{F-1} for $Q^{k+1}$.\newline
\end{proof}

\begin{lemma}\label{L4.4}
Suppose the conditions in Lemma \ref{L4.2} are fulfilled, and the ansatz in \eqref{F-1}, \eqref{F-2} and \eqref{F-3} holds for some $k$ with $0 \le k \le d-1$. Then the ansatz  \eqref{F-2} and \eqref{F-3} holds for $k+1$.
\end{lemma}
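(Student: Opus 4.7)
The plan is to combine the differential inequality for $Q^{k+1}$ obtained in Lemma~\ref{L4.3} with the inductive hypothesis on $D_k(\theta)$ to drive $Q^{k+1}$ into the required neighborhood, and then translate the resulting bound into one for $D_{k+1}(\theta)$ via Lemma~\ref{beta_QD_k}. First, for $t\ge t_k$ the inductive hypothesis \eqref{F-2} gives $D_k(\theta(t))\le \frac{\beta^{d-k}D^\infty}{[4(2N+1)c]^{d-k}}$, so \eqref{F-1} yields
\[
\dot{Q}^{k+1}(t)\le -\frac{K\cos\alpha}{c}\Bigl(Q^{k+1}(t) - (2N+1)c\,\tfrac{\beta^{d-k}D^\infty}{[4(2N+1)c]^{d-k}} - \tfrac{(D(\Omega)+2NK\sin\alpha)c}{K\cos\alpha}\Bigr),\qquad t\ge t_k.
\]
The coupling-strength constraint in \eqref{condition_21} forces $\frac{(D(\Omega)+2NK\sin\alpha)c}{K\cos\alpha}<\frac{\beta^{d+1}D^\infty}{[4(2N+1)c]^{d}}$, and the middle term evaluates to $\frac{\beta^{d-k}D^\infty}{4\,[4(2N+1)c]^{d-k-1}}$. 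Adding the two and exploiting $\beta<1$, $4(2N+1)c>1$ gives the clean bookkeeping bound
\[
(2N+1)c D_k(\theta(t)) + \tfrac{(D(\Omega)+2NK\sin\alpha)c}{K\cos\alpha}\le \tfrac{\beta^{d-k}D^\infty}{2\,[4(2N+1)c]^{d-k-1}},\qquad t\ge t_k,
\]
so the drift term in brackets is strictly negative whenever $Q^{k+1}(t)>\frac{\beta^{d-k}D^\infty}{[4(2N+1)c]^{d-k-1}}$, with a quantitative lower bound on its magnitude.

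Next, mimicking the stopping-time argument of Lemma~\ref{diameter_0_small}, I define
\[
t_{k+1}:=\inf\Bigl\{t\ge t_k\;:\; Q^{k+1}(t)\le \tfrac{\beta^{d-k}D^\infty}{[4(2N+1)c]^{d-k-1}}\Bigr\},
\]
with the convention $t_{k+1}=t_k$ if the inequality already holds at $t_k$. In the interval $[t_k,t_{k+1})$ the differential inequality forces $Q^{k+1}$ to decay at rate at least $\frac{K\cos\alpha}{c}\cdot\tfrac{\beta^{d-k}D^\infty}{2[4(2N+1)c]^{d-k-1}}$, which is in turn at least $\tfrac{K\cos\alpha}{c}\cdot\tfrac{\beta^{d+1}D^\infty}{[4(2N+1)c]^d}-(D(\Omega)+2NK\sin\alpha)$ after absorbing constants. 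Since $Q^{k+1}(t_k)\le D_{k+1}(\theta(t_k))<\zeta$ by Lemma~\ref{diameter_zeta} and Lemma~\ref{beta_QD_k}, this gives
\[
t_{k+1}-t_k \;<\; \frac{\zeta}{\tfrac{K\cos\alpha}{c}\cdot\tfrac{\beta^{d+1}D^\infty}{[4(2N+1)c]^d} - (D(\Omega)+2NK\sin\alpha)},
\]
and combining with the inductive bound $t_k<\frac{(k+1)\zeta}{\cdots}$ yields $t_{k+1}<\frac{(k+2)\zeta}{\cdots}$, which is the claim \eqref{F-3} for index $k+1$; the inequality $t_{k+1}<\bar t$ then follows from the $K$-lower bound in \eqref{condition_21}, exactly as in \eqref{D-5}.

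For $t\ge t_{k+1}$ I need to keep $Q^{k+1}$ below the target forever. This is a standard invariance argument by contradiction: if $Q^{k+1}$ ever re-crossed the threshold $\frac{\beta^{d-k}D^\infty}{[4(2N+1)c]^{d-k-1}}$, continuity would produce a first exit time $t^*>t_{k+1}$ with $Q^{k+1}(t^*)$ equal to the threshold and $\dot{Q}^{k+1}(t^*)\ge 0$, contradicting the displayed differential inequality which is strictly negative at that value (this is identical in spirit to the bootstrap step carried out in Step~2 of Lemma~\ref{L4.3}, replacing $M_{k+1}$ by the sharper bound). Hence
\[
Q^{k+1}(t)\le \tfrac{\beta^{d-k}D^\infty}{[4(2N+1)c]^{d-k-1}},\qquad t\ge t_{k+1}.
\]
Applying Lemma~\ref{beta_QD_k} gives $D_{k+1}(\theta(t))\le Q^{k+1}(t)/\beta \le \tfrac{\beta^{d-k-1}D^\infty}{[4(2N+1)c]^{d-k-1}}$ for $t\ge t_{k+1}$, which is \eqref{F-2} at level $k+1$.

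The main obstacle is the accounting in the first step: one must verify that the new source term $(2N+1)K\cos\alpha D_k(\theta)$, which is absent at level $k=0$ (Lemma~\ref{L4.2}), together with the frustration-induced drift $D(\Omega)+2NK\sin\alpha$, is small enough relative to the restoring rate $\tfrac{K\cos\alpha}{c}$ to beat down $Q^{k+1}$ to a level strictly smaller (by a factor of $4(2N+1)c$) than the level of $D_k$. The factor-of-four improvement at each step is precisely what forces the recursion to close over $d$ layers, and it is bought at the cost of the $[4(2N+1)c]^d$ factor appearing in the hypotheses \eqref{condition_21} on $K$ and $\alpha$.
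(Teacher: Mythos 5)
Your proposal is correct and follows essentially the same route as the paper: you introduce the identical stopping time $t_{k+1}$, derive the same bookkeeping bound $(2N+1)cD_k(\theta) + \frac{(D(\Omega)+2NK\sin\alpha)c}{K\cos\alpha} \le \frac{\beta^{d-k}D^\infty}{2[4(2N+1)c]^{d-k-1}}$ (which is the paper's (F-23)), the same quantitative decay rate, the same $t_{k+1}-t_k$ bound leading to the $(k+2)\zeta$ numerator, the same forward-invariance bootstrap, and the same final application of Lemma~\ref{beta_QD_k}. The only cosmetic difference is that you fold the paper's Case~1/Case~2 split into a single stopping-time definition with the convention $t_{k+1}=t_k$ when the threshold is already met, and you phrase the invariance step in terms of a derivative sign at a first-exit time rather than the paper's integral contradiction; both are equivalent.
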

\begin{proof}
From Lemma \ref{L4.3}, we know the dynamic of $Q^{k+1}$ is governed by \eqref{F-24}. For the purposes of discussion, we rewrite the  differential equation \eqref{F-24} and discuss it on $[t_k, + \infty)$,
\begin{equation}\label{F-25}
\dot{Q}^{k+1}(t) \le - \frac{K\cos\alpha}{c}   \left(Q^{k+1}(t) - (2N+1)cD_k(\theta(t)) - \frac{(D(\Omega)+2NK\sin\alpha)c}{K\cos\alpha}\right), \ t \in [t_k,+\infty).
\end{equation}
where $c$ is given in \eqref{F-dd20}. In the subsequence, we will apply \eqref{F-25} to find a finite time $t_{k+1}$ such that the quantity $Q^{k+1}$ in \eqref{F-25} is uniformly bounded by a small value after $t_{k+1}$. We split into two cases to discuss.\newline

\noindent$\bullet$ \textbf{Case 1.} We first consider the case that $Q^{k+1}(t_k) > \frac{\beta^{d-k}D^\infty}{ [4(2N+1)c]^{d-k-1}}$. In this case,  When $Q^{k+1}(t) \in [\frac{\beta^{d-k}D^\infty}{ [4(2N+1)c]^{d-k-1}},Q^{k+1}(t_k)]$, we combine \eqref{F-23} and \eqref{F-25} to have
\begin{equation}\label{F-d25}
\begin{aligned}
\dot{Q}^{k+1}(t) &\le - \frac{K\cos\alpha}{c} \left(\frac{\beta^{d-k}D^\infty}{ [4(2N+1)c]^{d-k-1}} - \frac{\beta^{d-k}D^\infty}{2[4(2N+1)c]^{d-k-1}}\right)\\
&=- \frac{K\cos\alpha}{c} \frac{\beta^{d-k}D^\infty}{2[4(2N+1)c]^{d-k-1}} < 0.
\end{aligned}
\end{equation}
That is to say, when $Q^{k+1}(t)$ is located in the interval $[\frac{\beta^{d-k}D^\infty}{ [4(2N+1)c]^{d-k-1}},Q^{k+1}(t_k)]$, $Q^{k+1}(t)$ will keep decreasing with a rate bounded by a uniform slope.
Therefore, we can define a stopping time $t_{k+1}$ as follows,
\[t_{k+1}=\inf \left\{t\geq t_k\ |\ Q^{k+1}(t)\le \frac{\beta^{d-k}D^\infty}{ [4(2N+1)c]^{d-k-1}}\right\}.\]
Then, based on \eqref{F-d25} and the definition of $t_{k+1}$, we see that $Q^{k+1}$ will decrease before $t_{k+1}$ and has the following property at $t_{k+1}$,
\begin{equation}\label{F-a26}
Q^{k+1}(t_{k+1}) = \frac{\beta^{d-k}D^\infty}{ (4c)^{d-k-1}}.
\end{equation}
Moreover, from \eqref{F-d25}, it yields that the stopping time $t_{k+1}$ satisfies the following upper bound estimate, 
\begin{equation}\label{F-26}
t_{k+1} \le \frac{Q^{k+1}(t_k) - \frac{\beta^{d-k}D^\infty}{ [4(2N+1)c]^{d-k-1}}}{\frac{K\cos\alpha}{c} \frac{\beta^{d-k}D^\infty}{2[4(2N+1)c]^{d-k-1}}} + t_k.
\end{equation}
Now we study the upper bound of $Q^{k+1}$ on $[t_{k+1}, +\infty)$. In fact,
we can apply \eqref{F-d25}, \eqref{F-a26} and the same arguments in \eqref{F-a23} to derive  
\begin{equation}\label{F-a31}
Q^{k+1}(t) \le \frac{\beta^{d-k}D^\infty}{ [4(2N+1)c]^{d-k-1}}, \quad t \in [t_{k+1}, +\infty).
\end{equation}
On the other hand, in order to verify \eqref{F-3}, we further study $t_{k+1}$ in \eqref{F-26}. For the first part on the right-hand side of \eqref{F-26}, from Lemma \ref{diameter_zeta} and the fact that
\begin{equation*}
Q^{k+1}(t_k) \le D_{k+1}(\theta(t_k)) \le D(\theta(t_k)) < \zeta, \quad \frac{\beta^{d-k}D^\infty}{ 2[4(2N+1)c]^{d-k-1}} > \frac{\beta^{d+1}D^\infty}{[4(2N+1)c]^d},
\end{equation*}
we have the following estimates
\begin{equation}\label{F-27}
\frac{Q^{k+1}(t_k) - \frac{\beta^{d-k}D^\infty}{ [4(2N+1)c]^{d-k-1}}}{\frac{K\cos\alpha}{c} \frac{\beta^{d-k}D^\infty}{2[4(2N+1)c]^{d-k-1}}} < \frac{\zeta}{\frac{K\cos\alpha}{c}\frac{\beta^{d+1}D^\infty}{[4(2N+1)c]^d} - (D(\Omega) + 2NK\sin\alpha)},
\end{equation}
 where the denominator on the right-hand side of above inequality is positive from  the conditions about $K$ and $\alpha$ in \eqref{condition_21}. For the term $t_k$ in \eqref{F-26},
based on the assumption \eqref{F-3} for $t_k$, we have
\begin{equation}\label{F-28}
t_k < \frac{(k+1)\zeta}{\frac{K\cos\alpha}{c}\frac{\beta^{d+1}D^\infty}{[4(2N+1)c]^d} - (D(\Omega) + 2NK\sin\alpha)} < \bar{t} = \frac{\zeta - D(\theta(0))}{D(\Omega) + 2NK\sin\alpha}.
\end{equation}
Thus it yields from \eqref{F-26}, \eqref{F-27} and \eqref{F-28}  that the time $t_{k+1}$ satisfies
\begin{equation}\label{F-29}
t_{k+1} < \frac{(k+2)\zeta}{\frac{K\cos\alpha}{c}\frac{\beta^{d+1}D^\infty}{[4(2N+1)c]^d} - (D(\Omega) + 2NK\sin\alpha)}.
\end{equation}
Moreover, from \eqref{condition_21}, it is easy to see that the coupling strength $K$ satisfies the following inequality
\begin{equation}\label{F-30}
\begin{aligned}
K &> \left(1+ \frac{(d+1)\zeta}{\zeta - D(\theta(0))}\right) \frac{(D(\Omega) + 2NK \sin \alpha)c}{\cos \alpha} \frac{[4(2N+1)c]^d}{\beta^{d+1}D^\infty}\\
&\ge \left(1+ \frac{(k+2)\zeta}{\zeta - D(\theta(0))}\right) \frac{(D(\Omega) + 2NK \sin \alpha)c}{\cos \alpha} \frac{[4(2N+1)c]^d}{\beta^{d+1}D^\infty}, \quad 0 \le k \le d-1.
\end{aligned}
\end{equation}
Thus we combine \eqref{F-29} and \eqref{F-30} to verify the Ansatz \eqref{F-3} for $k+1$ in the first case, i.e.,  the time $t_{k+1}$ subjects to the following estimate,
\begin{equation}\label{F-31}
t_{k+1} < \bar{t} = \frac{\zeta - D(\theta(0))}{D(\Omega) + 2NK\sin\alpha}.
\end{equation}

\noindent $\bullet$ \textbf{Case 2.} For another case that $Q^{k+1}(t_k) \le \frac{\beta^{d-k}D^\infty}{ (4c)^{d-k-1}}$. Similar to the analysis in \eqref{F-a23}, we apply \eqref{F-d25} to conclude that
\begin{equation}\label{F-b31}
Q^{k+1}(t) \le \frac{\beta^{d-k}D^\infty}{ [4(2N+1)c]^{d-k-1}}, \quad t \in [t_{k}, +\infty).
\end{equation}
In this case, we directly set $t_{k+1} = t_k$. Then, from \eqref{F-28}, it yields that the inequalities \eqref{F-29} and \eqref{F-31} also hold, which finish the verification of the Ansatz \eqref{F-3} in the second case. \newline

Finally, we are ready to verify the ansatz \eqref{F-2} for $k+1$. Actually, we can apply \eqref{F-a31}, \eqref{F-b31} and Lemma \ref{beta_QD_k} to have the upper bound of $D_{k+1}(\theta)$ on $[t_{k+1}, +\infty)$ as below
\begin{equation}\label{F-32}
\begin{aligned}
D_{k+1}(\theta(t)) \le \frac{Q^{k+1}(t)}{\beta} \le \frac{\beta^{d-k-1}D^\infty}{ [4(2N+1)c]^{d-k-1}}, \quad t \in [t_{k+1}, +\infty),
\end{aligned}
\end{equation}
Then we combine \eqref{F-29}, \eqref{F-31} and \eqref{F-32} in Case 1 and similar analysis in Case 2 to conclude that the Ansatz \eqref{F-2} and \eqref{F-3} is true for $D^{k+1}(\theta)$.\newline
\end{proof}
\vspace{0.5cm}

Now, we are ready to prove our main result.\newline

\noindent \textbf{Proof of Theorem  \ref{enter_small}:}  Combining Lemma \ref{L4.2}, Lemma \ref{L4.3} and Lemma \ref{L4.4}, we apply inductive criteria to conclude that the Ansatz \eqref{F-1} --\eqref{F-3} hold for all $0\leq k\leq d$. Then, it yields from \eqref{F-2} that there exists a finite time $t_d\ge0$ such that
\begin{equation*}
D(\theta(t)) = D_d(\theta(t)) \le D^\infty, \quad \mbox{for} \ t \in [t_d, +\infty).
\end{equation*}
Thus we derive the desired result in Theorem \ref{enter_small}. 

\begin{remark}
For the Kuramoto model with frustration, in Theorem \ref{enter_small}, we show the phase diameter of whole ensemble will be uniformly bounded by a small value $D^\infty$ after some finite time. Under the assumption that $\alpha$ is sufficiently small such that $D^\infty + \alpha < \frac{\pi}{2}$, the interaction function $\cos x$ in the dynamics of frequency is positive after the finite time. Thus, we can lift \eqref{KuM} to the second-order formulation, which enjoys the similar form to Cucker-Smale model with the interaction function $\cos x$.

More precisely, we can introduce phase velocity or frequency $\omega_i(t) := \dot{\theta}_i(t)$ for each oscillator, and 
directly differentiate \eqref{KuM} with respect to time $t$ to derive the equivalent second-order Cucker-Smale type model as below
\begin{equation}\label{s_KuM}
\begin{cases} 
\displaystyle \dot{\theta}_i(t) = \omega_i(t), \quad t > 0, \quad i =1,2,\ldots,N, \\
\displaystyle \dot{\omega}_i(t) = K \sum_{j \in \mathcal{N}_i} \cos (\theta_j(t) - \theta_i(t) + \alpha) (\omega_j(t) - \omega_i(t)), \\
\displaystyle (\theta_i(0),\omega_i(0)) =(\theta_i(0), \dot{\theta}_i(0)).
\end{cases}
\end{equation}
\end{remark}
Now for the second-order system \eqref{s_KuM}, we apply the results in \cite{D-H-K20} for Kuramoto model without frustration on a general digraph and present the frequency synchronization for Kuramoto model with frustrations. 
 \begin{corollary}\label{complete_syn}
Let $\theta_i$ be a solution to system \eqref{s_KuM} and suppose the assumptions in Lemma \ref{L4.2} are fulfilled. Moreover, assume that there exists time $t_* > 0$ such that
\begin{equation}\label{small_region}
D(\theta(t)) \le D^\infty, \quad \ t \in [t_*, +\infty),
\end{equation}
where $D^\infty < \min\{\zeta,\frac{\pi}{2}\}$ is a small positive constant and $\alpha$ is sufficiently small such that $D^\infty + \alpha < \frac{\pi}{2}$. Then there exist positive constants $C_1$ and $C_2$ such that
\begin{equation*}
D(\omega(t)) \le C_1 e^{-C_2 (t- t_*)}, \quad t > t_*,
\end{equation*}
where $D(\omega(t)) = \max_{1\le i \le N} \{\omega_i(t)\} - \min_{1\le i \le N} \{\omega_i(t)\}$ is the diameter of phase velocity.
\end{corollary}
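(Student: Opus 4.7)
The plan is to view \eqref{s_KuM} on $[t_*,\infty)$ as a Cucker--Smale type flocking model on the same digraph $\mathcal{G}$ with strictly positive, uniformly elliptic communication weights, and then invoke the digraph flocking machinery of \cite{D-H-K20}. First, from \eqref{small_region} and the hypothesis $D^\infty+\alpha<\pi/2$, every argument of the cosine appearing in \eqref{s_KuM} satisfies
\[|\theta_j(t)-\theta_i(t)+\alpha|\le D^\infty+\alpha<\frac{\pi}{2},\qquad t\ge t_*,\ j\in\mathcal{N}_i,\]
so the effective weights $\psi_{ij}(t):=K\chi_{ij}\cos(\theta_j(t)-\theta_i(t)+\alpha)$ satisfy the two-sided bound
\[0<K\cos(D^\infty+\alpha)\,\chi_{ij}\le \psi_{ij}(t)\le K\chi_{ij},\qquad t\ge t_*.\]

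Next, I would rewrite the frequency equation as a weighted graph-Laplacian dynamics
\[\dot\omega_i(t)=\sum_{j\in\mathcal{N}_i}\psi_{ij}(t)\,(\omega_j(t)-\omega_i(t)),\qquad t\ge t_*,\]
which is structurally identical to the second-order Cucker--Smale system on $\mathcal{G}$ treated in \cite{D-H-K20}. Since $\mathcal{G}$ contains a spanning tree, Lemma \ref{Node decomposition} supplies the decomposition $\mathcal{G}=\bigcup_{i=0}^d\mathcal{G}_i$, and I would propagate exponential contraction from the unique maximum node $\mathcal{G}_0$ outward by induction on $k=0,1,\dots,d$. At each stage one constructs suitable convex combinations of the frequencies $\{\omega_j^k\}$ in the spirit of the $\bar\theta^k_l,\underline\theta^k_l$ used in Sections \ref{sec:3}--\ref{sec:4}, producing a scalar quantity that majorises the frequency diameter of $\bigcup_{i\le k}\mathcal{G}_i$ and obeys a differential inequality with a strictly negative dissipation rate plus a vanishing forcing inherited from the preceding nodes.

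The main obstacle is the same loss of uniform coercivity already encountered in the phase analysis: a naive Gronwall estimate on $D(\omega)$ fails because oscillators in $\mathcal{G}_i$ with $i<k$ act as time-varying sources for those in $\mathcal{G}_k$, and within $\mathcal{G}_k$ only the neighbour set $\mathcal{N}_i^k(\cdot)$ is active. The hypo-coercive induction of \cite{D-H-K20} is designed exactly for this obstruction, and the ellipticity constant $K\cos(D^\infty+\alpha)$ plays the role of the uniform lower bound on the Cucker--Smale communication kernel. Combining this lower bound with the hierarchical node structure yields, at the end of the induction, positive constants $C_1,C_2$ depending only on $N$, $K$, $\alpha$, $D^\infty$, the topology of $\mathcal{G}$, and $D(\omega(t_*))$, such that $D(\omega(t))\le C_1 e^{-C_2(t-t_*)}$ for all $t>t_*$, which is the claimed frequency synchronization.
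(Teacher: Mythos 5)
Your proposal takes essentially the same route as the paper: you recast the frequency equation from \eqref{s_KuM} as a weighted digraph-Laplacian system, observe that \eqref{small_region} together with $D^\infty+\alpha<\pi/2$ gives the two-sided ellipticity bound $K\cos(D^\infty+\alpha)\chi_{ij}\le\psi_{ij}(t)\le K\chi_{ij}$ on $[t_*,\infty)$, and then invoke the hypo-coercive node-decomposition induction of \cite{D-H-K20} to propagate exponential contraction of the frequency diameter from the maximum node outward. This is precisely what the paper does; it simply cites \cite{D-H-K20} and omits the repeated details, whereas you spell out why the structural hypotheses of that reference are met.
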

\begin{proof}
We can apply Theorem \ref{enter_small} and the methods and results in the work of Dong et al. \cite{D-H-K20} for Kuramoto model without frustration to yield the emergence of exponentially fast synchronization in \eqref{KuM} and \eqref{s_KuM}. As the proof is almost the same as that in \cite{D-H-K20}, we omit its details.\newline 
\end{proof}

\section{Summary}\label{sec:5}
\setcounter{equation}{0}
In this paper, under the effect of frustration, we provide sufficient frameworks leading to the complete synchronization for the Kuramoto model with general network containing a spanning tree. To this end, we follow a node decomposition introduced in \cite{H-L-Z20} and construct hypo-coercive inequalities through which we can study the upper bounds of phase diameters. When the initial configuration is confined in a half circle, for sufficiently small frustration and sufficiently large coupling strength, we show that the relative differences of Kuramoto oscillators adding a phase shift will be confined into a small region less than a quarter circle in finite time, thus we can directly apply the methods and results in \cite{D-H-K20} to prove that the complete synchronization emerges exponentially fast. 
\vspace{0.5cm}

\begin{appendix}
\setcounter{equation}{0}
\section{proof of Lemma \ref{dynamics_Q0}}\label{appendix1}
We will split the proof into six steps. In the first step, we suppose by contrary that the phase diameter of $\mathcal{G}_0$ is bounded by $\gamma$ in a finite time interval. In the second, third and forth steps, we use induction criteria to construct the differential inequality of $Q^0(t)$ in the finite time interval. In the last two steps, we exploit the derived differential inequality of $Q^0(t)$ to conclude that phase diameter of $\mathcal{G}_0$ is bounded by $\gamma$ on $[0, +\infty)$, and thus the differential inequality of $Q^0(t)$ obtained in the forth step also holds on $[0, +\infty)$.\newline

\noindent $\bigstar$ \textbf{Step 1.} Define a set
\begin{equation*}
\mathcal{B}_0 := \{ T >0 : \ D_0(\theta(t)) < \gamma, \ \forall \ t \in [0,T) \}.
\end{equation*}
From Lemma \ref{diameter_zeta} where $N = N_0$ in the present section, the set $\mathcal{B}_0$ is non-empty since
\[D_0(\theta(t)) = D(\theta(t)) < \zeta < \gamma, \quad \forall \ t \in [0,\bar{t}),\]
which directly yields that $\bar{t} \in \mathcal{B}_0$. Define $T^* = \sup \mathcal{B}_0$. And we claim that $T^* = +\infty$. Suppose not, i.e., $T^* < +\infty$, then we apply the continuity of $D_0(\theta(t))$ to have
\begin{equation}\label{C-0}
D_0(\theta(t)) < \gamma, \quad \forall \ t\in [0,T^*), \quad D_0(\theta(T^*)) = \gamma.
\end{equation}
In particular, we have $\bar{t} \le T^*$. The analyticity of the solution to system \eqref{KuM} is guaranteed by the standard Cauchy-Lipschitz theory. Therefore, in the finite time interval $[0,T^*)$, any two oscillators either collide finite times or always stay together. If there are some $\theta_i$ and $\theta_j$ always staying together in $[0,T^*]$, we can view them as one oscillator and thus the total number of oscillators that we need to study can be reduced. This is a more simpler situation, and we can similarly deal with it. Therefore, we only consider the case that there is no pair of oscillators staying together in $[0,T^*)$. For this case, there are only finite many collisions occurring through $[0,T^*)$. Thus, we divide the time interval $[0,T^*)$ into a finite union as below
\[[0,T^*) = \bigcup_{l=1}^r J_l, \quad J_l = [t_{l-1},t_l),\]
where the end point $t_l$ denotes the collision instant. It is easy to see that there is no collision in the interior of $J_l$. Now we pick out any time interval $J_l$ and assume that
\begin{equation}\label{C-1}
\theta_1^0(t) \le \theta^0_2(t) \le \ldots \le \theta^0_{N_0}(t), \quad t \in J_l. 
\end{equation}
\noindent $\bigstar$ \textbf{Step 2.} According to the notations in \eqref{abbreviation_0}, we follow the process $\mathcal{A}_1$ and $\mathcal{A}_2$ to construct $\bar{\theta}^0_n$ and $\underline{\theta}^0_n, \ 1\le n\le N_0$, respecively.
We first study the dynamics of $\bar{\theta}_{N_0}^0 = \theta_{N_0}^0$,
\begin{equation}\label{C1-1}
\begin{aligned}
\dot{\theta}^0_{N_0}(t) &= \Omega^0_{N_0} + K \sum_{j \in \mathcal{N}^0_{N_0}(0)} \sin (\theta_j^0 - \theta^0_{N_0} + \alpha) \\
& \le \Omega_M + K \sum_{j \in \mathcal{N}^0_{N_0}(0)} \left[\sin (\theta^0_j - \theta^0_{N_0}) \cos \alpha + \cos (\theta^0_j - \theta^0_{N_0}) \sin \alpha\right]\\
%& = \Omega_M + K \cos \alpha \sum_{j \in \mathcal{N}^0_{N_0}(0)} \sin (\theta^0_j - \theta^0_{N_0}) + K \sin \alpha \sum_{j \in \mathcal{N}^0_{N_0}(0)} \cos (\theta^0_j - \theta^0_{N_0}) \\
& \le \Omega_M + N_0K \sin \alpha + K \cos \alpha \min_{j \in \mathcal{N}^0_{N_0}(0)} \sin (\theta^0_j - \theta^0_{N_0}).
\end{aligned}
\end{equation}
%where based on the well-ordered assumption \eqref{C-1}, the last inequality above holds from the negative sign of $\sin(\theta^0_j(t) - \theta^0_{N_0}(t))$ and the fact
%\[\vert \sum_{j \in \mathcal{N}^0_{N_0}(0)} \cos (\theta^0_j - \theta^0_{N_0}) \vert \le N_0.\]
For the dynamics of $\bar{\theta}^0_{N_0-1}$, according to the process $\mathcal{A}_1$ and $\bar{a}^0_{N_0-1} = \eta(N_0 + 2)$ in \eqref{coeffi_0}, we apply \eqref{C1-1} and estimate the derivative of $\bar{\theta}^0_{N_0-1}$ as follows,
\begin{align}
\dot{\bar{\theta}}^0_{N_0-1} &= \frac{d}{dt} \left(\frac{\bar{a}^0_{N_0-1} \theta^0_{N_0} + \theta^0_{N_0-1}}{\bar{a}^0_{N_0-1} +1}\right) = \frac{\bar{a}^0_{N_0-1}}{\bar{a}^0_{N_0-1} + 1}\dot{\theta}^0_{N_0} + \frac{1}{\bar{a}^0_{N_0-1}+1}\dot{\theta}^0_{N_0-1} \label{C-2}\\
& \le \frac{\bar{a}^0_{N_0-1}}{\bar{a}^0_{N_0-1} + 1} \left(\Omega_M + N_0K \sin \alpha + K \cos \alpha \min_{j \in \mathcal{N}^0_{N_0}(0)} \sin (\theta^0_j - \theta^0_{N_0}) \right) \notag\\
&+ \frac{1}{\bar{a}^0_{N_0-1}+1} \left(\Omega^0_{N_0-1} + K \sum_{j \in \mathcal{N}^0_{N_0-1}(0)} \sin (\theta^0_j - \theta^0_{N_0-1} + \alpha)\right)  \notag\\
&\le \Omega_M + K \cos \alpha \frac{1}{\bar{a}^0_{N_0-1} + 1} \eta (N_0 +2) \min_{j \in \mathcal{N}^0_{N_0}(0)} \sin (\theta^0_j - \theta^0_{N_0})   \notag\\
&+ \frac{\bar{a}^0_{N_0-1}}{\bar{a}^0_{N_0-1} + 1} N_0 K \sin \alpha + K\cos \alpha \frac{1}{\bar{a}^0_{N_0-1} +1} \sum_{j \in \mathcal{N}^0_{N_0-1}(0)} \sin (\theta^0_j - \theta^0_{N_0-1}) \notag \\
&+ K \sin \alpha \frac{1}{\bar{a}^0_{N_0-1} + 1} \sum_{j \in \mathcal{N}^0_{N_0 -1}(0) }\cos (\theta^0_j - \theta^0_{N_0 -1}) \notag\\
&\le \Omega_M + K \cos \alpha \frac{1}{\bar{a}^0_{N_0-1} + 1} 2\eta  \min_{j \in \mathcal{N}^0_{N_0}(0)} \sin (\theta^0_j - \theta^0_{N_0}) + \frac{\bar{a}^0_{N_0-1}}{\bar{a}^0_{N_0-1} + 1} N_0 K \sin \alpha   \notag\\
&+ K\cos \alpha \frac{1}{\bar{a}^0_{N_0-1} + 1} \left(\underset{j \le N_0 -1}{\sum_{j \in \mathcal{N}^0_{N_0-1}(0)}} \sin(\theta^0_j - \theta^0_{N_0-1}) + \sin (\theta^0_{N_0} - \theta^0_{N_0 -1})\right) \notag\\
&+\frac{1}{\bar{a}^0_{N_0-1} + 1} N_0 K \sin \alpha  \notag\\
&\le \Omega_M + K \cos \alpha \frac{1}{\bar{a}^0_{N_0-1} + 1} \eta  \min_{j \in \mathcal{N}^0_{N_0}(0)} \sin (\theta^0_j - \theta^0_{N_0}) \notag \\
&+K\cos \alpha \frac{1}{\bar{a}^0_{N_0-1} + 1}  \underset{j \le N_0 -1}{\min_{j \in \mathcal{N}^0_{N_0-1}(0)}} \sin (\theta^0_j - \theta^0_{N_0-1})  \notag\\
&+ K\cos \alpha \frac{1}{\bar{a}^0_{N_0-1} + 1}  \underbrace{\left(\eta  \min_{j \in \mathcal{N}^0_{N_0}(0)} \sin (\theta^0_j - \theta^0_{N_0}) + \sin (\theta^0_{N_0} - \theta^0_{N_0 -1})\right)}_{\mathcal{I}_1} + N_0K\sin \alpha, \notag
\end{align}
where we use
\begin{equation*}
\begin{aligned}
&|\sum_{j \in \mathcal{N}^0_{N_0-1}(0)} \cos (\theta^0_j - \theta^0_{N_0-1})| \le N_0, \quad K \cos \alpha \frac{1}{\bar{a}^0_{N_0-1} + 1} \eta N_0 \min_{j \in \mathcal{N}^0_{N_0}(0)} \sin (\theta^0_j - \theta^0_{N_0}) \le 0,\\
&\underset{j \le N_0 -1}{\sum_{j \in \mathcal{N}^0_{N_0-1}(0)}} \sin(\theta^0_j - \theta^0_{N_0-1})  \le \underset{j \le N_0 -1}{\min_{j \in \mathcal{N}^0_{N_0-1}(0)}} \sin (\theta^0_j - \theta^0_{N_0-1}). 
\end{aligned}
\end{equation*}
Next we show the term $\mathcal{I}_1$ is non-positive.  We only consider the situation $\gamma > \frac{\pi}{2}$, and the case $\gamma \le \frac{\pi}{2}$ can be similarly dealt with. It is clear that
\[\min_{j \in \mathcal{N}^0_{N_0}(0)} \sin(\theta^0_j - \theta^0_{N_0})  \le \sin(\theta^0_{\bar{k}_{N_0}} - \theta^0_{N_0}) \quad \text{where} \  \bar{k}_{N_0} = \min_{j \in  \mathcal{N}^0_{N_0}(0)} j. \]
Note that $\bar{k}_{N_0} \le N_0-1$ since $\bar{\mathcal{L}}^{N_0}_{N_0}(\bar{C}_{N_0,N_0})$ is not a general root. Therefore, if $ 0 \le \theta^0_{N_0}(t) - \theta^0_{\bar{k}_{N_0}}(t) \le \frac{\pi}{2}$, we immediately obtain that
\begin{equation*}
0 \le \theta^0_{N_0}(t) - \theta^0_{N_0-1}(t) \le \theta^0_{N_0}(t) - \theta^0_{\bar{k}_{N_0}}(t) \le \frac{\pi}{2}, 
\end{equation*}
which implies that
\begin{equation*}
\mathcal{I}_1 \le \eta \sin(\theta^0_{\bar{k}_{N_0}} - \theta^0_{N_0}) + \sin(\theta^0_{N_0} - \theta^0_{N_0-1}) \le \sin(\theta^0_{\bar{k}_{N_0}} - \theta^0_{N_0}) + \sin(\theta^0_{N_0} - \theta^0_{N_0-1}) \le 0.
\end{equation*}
On the other hand, if $\frac{\pi}{2} <  \theta^0_{N_0}(t) - \theta^0_{\bar{k}_{N_0}}(t) < \gamma$, we use the fact
\[\eta > \frac{1}{\sin \gamma} \quad \mbox{and} \quad \sin(\theta^0_{N_0}(t) - \theta^0_{\bar{k}_{N_0}}(t)) > \sin \gamma,\] 
to conclude that $\eta \sin(\theta^0_{\bar{k}_{N_0}} - \theta^0_{N_0}) \le -1$. Hence, in this case, we still obtain that
\begin{equation*}
\mathcal{I}_1 \le \eta \sin(\theta^0_{\bar{k}_{N_0}} - \theta^0_{N_0}) + \sin(\theta^0_{N_0} - \theta^0_{N_0-1}) \le -1 + 1 \le 0.
\end{equation*}
Thus, for $t\in J_l$, we combine above analysis to conclude that 
\begin{equation}\label{C-3}
\mathcal{I}_1 = \eta \min_{j \in \mathcal{N}^0_{N_0}(0)} \sin(\theta^0_j - \theta^0_{N_0}) + \sin(\theta^0_{N_0} - \theta^0_{N_0-1}) \le 0.
\end{equation}
Then combining \eqref{C-2} and \eqref{C-3}, we derive that
\begin{equation}\label{C-4}
\begin{aligned}
\dot{\bar{\theta}}^0_{N_0-1} &\le  \Omega_M + N_0 K \sin \alpha \\
& + K \cos \alpha \frac{1}{\bar{a}^0_{N_0-1} + 1} \left(\eta  \min_{j \in \mathcal{N}^0_{N_0}(0)} \sin (\theta^0_j - \theta^0_{N_0}) + \underset{j \le N_0 -1}{\min_{j \in \mathcal{N}^0_{N_0-1}(0)}} \sin (\theta^0_j - \theta^0_{N_0-1}) \right). 
\end{aligned}
\end{equation}
$\ $

\noindent $\bigstar$ \textbf{Step 3.} Now we apply the induction principle to cope with $\bar{\theta}^0_n$ in \eqref{abbreviation_0},  which is construced in the iteration process $\mathcal{A}_1$. We will prove for $1 \le n \le N_0$ that,
\begin{equation}\label{C-5}
\dot{\bar{\theta}}^0_n(t) \le \Omega_M + N_0K\sin \alpha + K \cos \alpha \frac{1}{\bar{a}^0_{n} + 1} \sum_{i=n}^{N_0} \eta^{i-n} \underset{j \le i}{\min_{j \in \mathcal{N}^0_{i}(0)}} \sin (\theta^0_j(t) - \theta^0_{i}(t)) 
\end{equation}
In fact, it is known that \eqref{C-5} already holds for $n = N_0, N_0-1$ from \eqref{C1-1} and \eqref{C-4}. Then by induction criteria, suppose \eqref{C-5} holds for $n$, 
%we have
%\begin{equation}\label{C-6}
%\dot{\bar{\theta}}^0_l(t) \le \Omega_M + N_0K\sin \alpha + K \cos \alpha \frac{1}{\bar{a}^0_{l} + 1} \sum_{i=l}^{N_0} \eta^{i-l} \underset{j \le i}{\min_{j \in \mathcal{N}^0_{i}(0)}} \sin (\theta^0_j(t) - \theta^0_{i}(t)).
%\end{equation}
Next we verify that \eqref{C-5} still holds for $n-1$. According to the Algorithm $\mathcal{A}_1$ and similar calculations in \eqref{C-2}, the dynamics of the quantity $\bar{\theta}^0_{n-1}(t)$ subjects to the following estimates
\begin{equation}\label{C-7}
\begin{aligned}
\dot{\bar{\theta}}^0_{n-1} 
&\le \Omega_M + N_0K \sin \alpha\\
&+ K \cos \alpha \frac{1}{\bar{a}^0_{n-1} + 1} \eta\sum_{i=n}^{N_0} \eta^{i-n} \underset{j \le i}{\min_{j \in \mathcal{N}^0_{i}(0)}} \sin (\theta^0_j - \theta^0_{i}) \\
&+ K \cos \alpha \frac{1}{\bar{a}^0_{n-1}+1} \underset{j \le n-1}{\min_{j\in \mathcal{N}_{n-1}^0(0)}} \sin (\theta^0_j - \theta^0_{n-1})\\
&+K \cos \alpha \frac{1}{\bar{a}^0_{n-1} + 1} \\
&\times\left(\underbrace{\eta(N_0 - n +1)\sum_{i=n}^{N_0} \eta^{i-n} \underset{j \le i}{\min_{j \in \mathcal{N}^0_{i}(0)}} \sin (\theta^0_j - \theta^0_{i}) + \underset{j > n-1}{\sum_{j \in \mathcal{N}^0_{n-1}(0)}} \sin (\theta^0_j - \theta^0_{n-1})}_{\mathcal{I}_2}\right) ,
\end{aligned}
\end{equation}
%where we use the fact that
%\begin{equation*}
%\begin{aligned}
%&K \cos \alpha \frac{1}{\bar{a}^0_{n-1} + 1} \eta N_0\sum_{i=n}^{N_0} \eta^{i-n} \underset{j \le i}{\min_{j \in \mathcal{N}^0_{i}(0)}} \sin (\theta^0_j - \theta^0_{i}) \le 0, \quad |\sum_{j \in \mathcal{N}^0_{n-1}(0)} \cos (\theta^0_j - \theta^0_{n-1})| \le N_0,\\
%&\bar{a}^0_{n-1} = \eta(2N_0-n + 2)(\bar{a}^0_{n} +1), \quad \underset{j \le n-1}{\sum_{j \in \mathcal{N}^0_{n-1}(0)}} \sin (\theta^0_j - \theta^0_{n-1}) \le \underset{j \le n-1}{\min_{j\in \mathcal{N}_{n-1}^0(0)}} \sin (\theta^0_j - \theta^0_{n-1}).
%\end{aligned}
%\end{equation*}
Moreover, we can prove the term $\mathcal{I}_2$ is non-positive. As the proof is very similar as that in the previous step, we omit the details and directly claim that $\mathcal{I}_2 \le 0$, which together with \eqref{C-7} verifies \eqref{C-5}.\newline

\noindent $\bigstar$ \textbf{Step 4.}
Now, we set $n=1$ in \eqref{C-5} and apply Lemma \ref{eta_sin_inequality} to have
\begin{equation}\label{C-10}
\begin{aligned}
\dot{\bar{\theta}}^0_{1} &\le \Omega_M + N_0 K \sin \alpha + K \cos \alpha \frac{1}{\bar{a}^0_1+1} \sum_{i=1}^{N_0} \eta^{i-1} \underset{j \le i}{\min_{j \in \mathcal{N}^0_{i}(0)}} \sin (\theta^0_j - \theta^0_{i})  \\
&\le \Omega_M + N_0K \sin \alpha+ K \cos \alpha \frac{1}{\bar{a}^0_1+1} \sin (\theta^0_{\bar{k}_1} - \theta^0_{N_0}) \\
&= \Omega_M + N_0K \sin \alpha+ K \cos \alpha \frac{1}{\bar{a}^0_1+1} \sin (\theta^0_{1} - \theta^0_{N_0}) ,
\end{aligned}
\end{equation}
where $\bar{k}_{1} = \min_{j \in \bigcup_{i=1}^{N_0} \mathcal{N}^0_{i}(0)} j =1$ due to the strong connectivity of $\mathcal{G}_0$. Similarly, we can follow the process $\mathcal{A}_2$ to construct $\underline{\theta}^0_k$ in \eqref{abbreviation_0} until $k = N_0$. Then, we can apply the similar argument in \eqref{C-5} to obtain that,
\begin{equation}\label{C-11}
\begin{aligned}
\frac{d}{dt} \underline{\theta}^0_{N_0} (t) 
%&\ge \Omega_m - N_0K \sin \alpha + K \cos \alpha \frac{1}{\underline{a}^0_{N_0} + 1} \sum_{i=1}^{N_0} \eta^{N_0 - i} \underset{j \ge i}{\max_{j \in \mathcal{N}^0_i(0)}} \sin (\theta^0_j - \theta^0_i) \\
%&\ge \Omega_m - N_0K \sin \alpha + K \cos \alpha \frac{1}{\underline{a}^0_{N_0} + 1} \sin (\theta^0_{\underline{k}_{N_0}} - \theta^0_1)  \\
& \ge \Omega_m - N_0K \sin \alpha + K \cos \alpha \frac{1}{\bar{a}^0_1 + 1} \sin (\theta^0_{N_0} - \theta^0_1) ,
\end{aligned}
\end{equation}
%where we use the strong connectivity and the fact that $\underline{k}_{N_0} = \max_{j \in \bigcup_{i=1}^{N_0} \mathcal{N}^0_{i}(0)} j =N_0$ and $\underline{a}^0_{N_0}=\bar{a}^0_{1}$. 
Then we recall the notations $\bar{\theta}_0 = \bar{\theta}^0_1$ and $\underline{\theta}_0 = \underline{\theta}^0_{N_0}$, and combine \eqref{C-10} and \eqref{C-11} to obtain that
\begin{equation*}
\begin{aligned}
\dot{Q}^0(t) &= \frac{d}{dt}(\bar{\theta}_0 - \underline{\theta}_0) \le D(\Omega) + 2N_0K \sin \alpha- K \cos \alpha \frac{2}{\bar{a}^0_1 + 1} \sin (\theta^0_{N_0} - \theta^0_1)  \\
&\le D(\Omega) + 2N_0K \sin \alpha - K\cos \alpha \frac{1}{\sum_{j=1}^{N_0-1} \eta^j A(2N_0,j) +1} \sin (\theta^0_{N_0} - \theta^0_1) 
\end{aligned}
\end{equation*}
where we use the property
\[\bar{a}^0_{1} = \sum_{j=1}^{N_0-1} \eta^j A(2N_0,j) .\]
As the function  $\frac{\sin x}{x}$ is monotonically decreasing in $(0, \pi]$, we apply \eqref{C-0} to obtain  that
\[\sin (\theta^0_{N_0} - \theta^0_1) \ge \frac{\sin \gamma}{\gamma}(\theta^0_{N_0} - \theta^0_1).\]
Moreover, due to the fact $Q^0(t) \le \theta^0_{N_0}(t) - \theta^0_1(t)$, we have
\begin{equation}\label{C-12}
\begin{aligned}
\dot{Q}^0(t) &\le D(\Omega) + 2N_0 K \sin \alpha - K\cos \alpha \frac{1}{\sum_{j=1}^{N_0-1} \eta^j A(2N_0,j) +1} \frac{\sin \gamma}{\gamma}(\theta^0_{N_0} - \theta^0_1)  \\
&\le D(\Omega) + 2N_0 K \sin \alpha - K\cos \alpha \frac{1}{\sum_{j=1}^{N_0-1} \eta^j A(2N_0,j) +1} \frac{\sin \gamma}{\gamma} Q^0(t) , \quad t \in J_l.
\end{aligned}
\end{equation}
Note that the constructed quantity $Q^0(t) = \bar{\theta}_0(t) - \underline{\theta}_0(t)$ is Lipschitz continuous on $[0,T^*)$. 
Moreover, the above analysis does not depend on the time interval $J_l, \ l=1,2,\ldots,r$, thus the differential inequality \eqref{C-12} holds almost everywhere on $[0,T^*)$. \newline

\noindent $\bigstar$ \textbf{Step 5.} Next we study the upper bound of $Q^0(t)$ in the period $[0,T^*)$. Define
\begin{equation*}
M_0 = \max\left\{Q^0(0), \beta D^\infty\right\}.
\end{equation*}
We claim that
\begin{equation}\label{C-14}
Q^0(t) \le M_0 \quad \mbox{for all} \ t\in [0,T^*).
\end{equation}
Suppose not, then there exists some $\tilde{t} \in [0,T^*)$ such that $Q^0(\tilde{t}) > M_0$. We construct a set
\[\mathcal{C}_0  := \{t < \tilde{t} \ | \ Q^0(t) \le M_0\} .\]
Since $0 \in \mathcal{C}_0$, the set $\mathcal{C}_0$ is not empty. Then we denote $t^* = \sup \mathcal{C}_0$. It is easy to see that 
\begin{equation}\label{C-15}
t^* < \tilde{t}, \quad Q^0(t^*) = M_0, \quad Q^0(t) > M_0 \quad \mbox{for} \ t \in (t^*, \tilde{t}].
\end{equation}
 For a given sufficiently small $D^\infty < \min\{\frac{\pi}{2},\zeta\}$, based on the assumptions about the frustration and the coupling strength in \eqref{condition_3}, it is clear that
\begin{equation}\label{C-13}
K > \left(1+ \frac{\zeta}{\zeta - D(\theta(0)}\right) \frac{(D(\Omega) +2N_0 K\sin \alpha)c}{\cos \alpha} \frac{1}{\beta D^\infty} > \frac{(D(\Omega) +2N_0 K\sin \alpha)c}{\cos \alpha} \frac{1}{\beta D^\infty}
\end{equation}
where
\[c = \frac{\left(\sum_{j=1}^{N_0-1} \eta^j A(2N_0,j) + 1\right)\gamma}{\sin \gamma}.\]
Thus combing the construction of $M_0$, \eqref{C-15} and \eqref{C-13}, we obtain that for $t \in (t^*, \tilde{t}]$, the following estimate holds ,
\begin{equation*}
\begin{aligned}
&D(\Omega) + 2N_0 K \sin \alpha - K\cos \alpha \frac{1}{\sum_{j=1}^{N_0-1} \eta^j A(2N_0,j) +1} \frac{\sin \gamma}{\gamma} Q^0(t)  \\
& < D(\Omega) + 2N_0 K \sin \alpha - K\cos \alpha \frac{1}{\sum_{j=1}^{N_0-1} \eta^j A(2N_0,j) +1} \frac{\sin \gamma}{\gamma} \beta D^\infty  < 0.
\end{aligned}
\end{equation*}
Then, we apply the above inequality and integrate on the both sides of \eqref{C-12} from $t^*$ to $\tilde{t}$ to get
\begin{equation*}
\begin{aligned}
&Q^0(\tilde{t}) - M_0\\
 &= Q^0(\tilde{t}) - Q^0(t^*) \\
&\le \int_{t^*}^{\tilde{t}} \left(D(\Omega) + 2N_0 K \sin \alpha - K\cos \alpha \frac{1}{\sum_{j=1}^{N_0-1} \eta^j A(2N_0,j) +1} \frac{\sin \gamma}{\gamma} Q^0(t) \right)dt <0,
\end{aligned}
\end{equation*}
which obviously contradicts to the fact $Q^0(\tilde{t}) - M_0 >0$, and verifies \eqref{C-14}.\newline

\noindent $\bigstar$ \textbf{Step 6.} Now we are ready to show the contradiction to \eqref{C-0}, which implies that $T^*=+\infty$. In fact, from the fact that $\beta < 1, D^\infty < \zeta$ and $Q^0(0) \le D_0(\theta(0)) < \zeta$, we see
\begin{equation*}
Q^0(t) \le M_0 = \max\left\{Q^0(0), \beta D^\infty\right\} < \zeta, \quad t\in [0,T^*).
\end{equation*}
Then we apply the relation $\beta D_0(\theta(t)) \le Q^0(t)$ given in Lemma \ref{beta_QD} and the assumption $\eta > \frac{2}{1 - \frac{\zeta}{\gamma}}$ in \eqref{condition_3} to obtain that
\[D_0(\theta(t)) \le \frac{Q^0(t)}{\beta} < \frac{\zeta}{\beta} < \gamma, \quad t \in [0,T^*) \quad \mbox{where} \ \beta = 1 - \frac{2}{\eta}.\]
As $D_0(\theta(t))$ is continuous, we have
\begin{equation*}
D_0(\theta(T^*)) = \lim_{t \to (T^*)^-}D_0(\theta(t)) \le \frac{\zeta}{\beta} < \gamma,
\end{equation*}
which contradicts to the situation that $D_0(\theta(T^*)) = \gamma$ in \eqref{C-0}.
Therefore, we conclude that $T^* = +\infty$, which implies that
\begin{equation}\label{C-a15}
D_0(\theta(t)) < \gamma, \quad \mbox{for all} \ t \in [0, +\infty).
\end{equation}
Then for any finite time $T>0$, we apply \eqref{C-a15} and repeat the same argument in the second, third, forth steps to obtain the dynamics of $Q^0(t)$ in \eqref{C-12} holds on $[0,T)$. Thus we obtain the following differential inequality of $Q^0$ on the whole time interval:
\begin{equation*}
\dot{Q}^0(t) \le D(\Omega) + 2N_0 K \sin \alpha- K\cos \alpha \frac{1}{\sum_{j=1}^{N_0-1} \eta^j A(2N_0,j) +1} \frac{\sin \gamma}{\gamma} Q^0(t) , \ t \in [0, +\infty).
\end{equation*}
%Thus, we complete the proof of this Lemma. \newline
\qed

\section{proof of step 1 in lemma \ref{L4.3}}\label{appendix2}
\setcounter{equation}{0}
We will show the detailed proof of Step 1 in Lemma \ref{L4.3}. Now we pick out any interval $J_l$ with $1\le l \le r$, where the orders of both $\{\bar{\theta}_i\}_{i=0}^{k+1}$ and $\{\underline{\theta}_i\}_{i=0}^{k+1}$ are preseved and the order of oscillators in each subdigraph $\mathcal{G}_i$ with $0\le i \le k+1$ will not change in each time interval. Then, we consider four cases depending on the possibility of relative position between $\bigcup_{i=0}^k \mathcal{G}$ and $\mathcal{G}_{k+1}$. 

\begin{figure}[H]
	\centering  %图片全局居中
	\subfigbottomskip=20pt %两行子图之间的行间距
	\subfigcapskip=-2pt %设置子图与子标题之间的距离
	\subfigure[Case 1]{
		\includegraphics[width=0.40\linewidth]{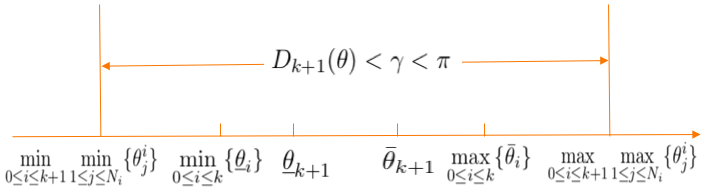}}
		\qquad 
	\subfigure[Case 2]{
		\includegraphics[width=0.40\linewidth]{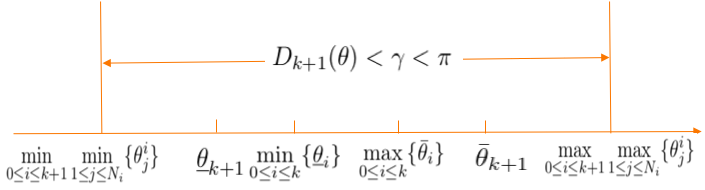}}
	  \\
	\subfigure[Case 3]{
		\includegraphics[width=0.40\linewidth]{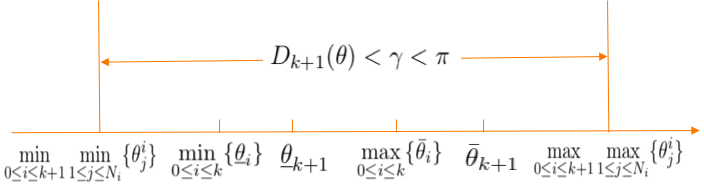}}
	\qquad
	\subfigure[Case 4]{
		\includegraphics[width=0.40\linewidth]{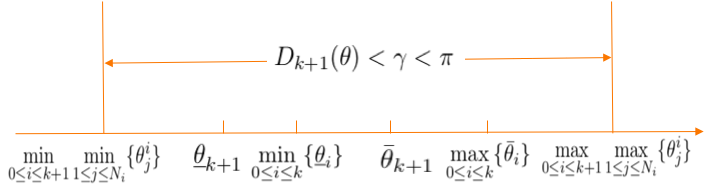}}
	\caption{The four cases}
	\label{Fig1}
\end{figure}
Figure \ref{Fig1} above shows the four possible relations between $\bigcup_{i=0}^k \mathcal{G}$ and $\mathcal{G}_{k+1}$ at any time $t$. Case $1$ and Case $4$ are similar and relative simple, while the analysis on Case $2$ and Case $3$ are similar but much more complicated. Therefore, we will only show the detailed proof of Case $2$ for simplicity. 
%\subsection{Case 1} Consider the case that
%\begin{equation*}
%\max_{0 \le i\le k+1}\{\bar{\theta}_i\}  = \max_{0 \le i\le k}\{\bar{\theta}_i\}, \quad \min_{0\le i\le k+1}\{\underline{\theta}_i\} = \min_{0\le i\le k}\{\underline{\theta}_i\} \quad \mbox{on} \ J_l. 
%\end{equation*}
%\begin{figure}[h]
%\centering
%\includegraphics[width=0.5\textwidth]{Case1.PNG}
%\caption{The comparison relation in Case 1}
%\label{Fig1}
%\end{figure}
%The comparison relation in this case is showed in Figure \ref{Fig1}.
%In this case, $Q^{k+1}(t) = Q^k(t)$, from the assumption of induction principle and \eqref{F-1}, it is obvious that
%\begin{equation*}
%\begin{aligned}
%\frac{d}{dt}Q^{k+1}(t) &= \frac{d}{dt}Q^k(t), \qquad \qquad t \in J_l, \\
%&\le D(\Omega) +2N K\sin \alpha + (2N+1)K\cos \alpha D_{k-1}(\theta(t)) - \frac{K\cos\alpha}{c} Q^k(t) \\
%& \le D(\Omega) +2N K\sin \alpha + (2N+1)K\cos \alpha D_{k}(\theta(t))- \frac{K\cos\alpha}{c} Q^{k+1}(t),
%\end{aligned}
%\end{equation*}
%where we use $D_{k-1}(\theta(t)) \le D_{k}(\theta(t))$. Thus we obtain the dynamics for $Q^{k+1}(t)$ in \eqref{F-1} on $J_l$.
In this case, we have from Figure \ref{Fig1} that
\begin{equation*}
\max_{0 \le i\le k+1}\{\bar{\theta}_i\}  = \bar{\theta}_{k+1} , \quad \min_{0\le i\le k+1}\{\underline{\theta}_i\} =  \underline{\theta}_{k+1} \quad \mbox{for $t\in J_l$}.
\end{equation*}
%\begin{figure}[h]
%\centering
%\includegraphics[width=0.5\textwidth]{Case2.PNG}
%\caption{The comparison relation in Case 2}
%\label{Fig2}
%\end{figure}
%The comparison relation in this case is presented in Figure \ref{Fig2}.
Without loss of generality, we assume that
\begin{equation*}
\theta^{k+1}_1 \le \theta^{k+1}_2 \le \dots \le \theta^{k+1}_{N_{k+1}}, \quad \mbox{for $t\in J_l$}.
\end{equation*}

\noindent $\bigstar$ \textbf{Step 1.} Similar to \eqref{C-5}, we claim that for $1 \le n \le N_{k+1}$, the following inequalities hold
\begin{equation}\label{F-5}
\begin{aligned}
\frac{d}{dt} \bar{\theta}_n^{k+1}(t) &\le \Omega_M + S_{k+1}K\sin \alpha + S_kK \cos \alpha D_k (\theta(t))\\
&+ K \cos \alpha \frac{1}{\bar{a}^{k+1}_n + 1}\sum_{i=n}^{N_{k+1}}\left( \eta^{i-n} \underset{j \le i}{\min_{j\in \mathcal{N}_{i}^{k+1}(k+1)}} \sin (\theta^{k+1}_j(t) - \theta^{k+1}_{i}(t))\right).
\end{aligned}
\end{equation}
where $S_k = \sum_{i=0}^k N_i$. In the following, we will prove the claim \eqref{F-5} via induction principle.\newline

\noindent $\bigstar$ \textbf{Step 1.1.} As an initial step, we first verify that \eqref{F-5} holds for $n = N_{k+1}$. In fact, we have
%estimate the dynamics of $\bar{\theta}^{k+1}_{N_{k+1}}$ 
\begin{equation}\label{F-d6}
\begin{aligned}
\frac{d}{dt} \bar{\theta}^{k+1}_{N_{k+1}} &= \frac{d}{dt} \theta^{k+1}_{N_{k+1}}\\
%&= \Omega^{k+1}_{N_{k+1}} + K \sum_{j \in \mathcal{N}^{k+1}_{N_{k+1}}(k+1)} \sin (\theta^{k+1}_j - \theta^{k+1}_{N_{k+1}} + \alpha)
%+ K \sum_{l=0}^k \sum_{j \in \mathcal{N}^{k+1}_{N_{k+1}}(l)} \sin (\theta^l_j - \theta^{k+1}_{N_{k+1}} + \alpha) \\
%& = \Omega^{k+1}_{N_{k+1}} + K \cos \alpha \sum_{j \in \mathcal{N}^{k+1}_{N_{k+1}}(k+1)} \sin (\theta^{k+1}_j - \theta^{k+1}_{N_{k+1}}) + K \sin \alpha \sum_{j \in \mathcal{N}^{k+1}_{N_{k+1}}(k+1)} \cos (\theta^{k+1}_j - \theta^{k+1}_{N_{k+1}}) \\
%&+ K \cos \alpha \sum_{l=0}^k \sum_{j \in \mathcal{N}^{k+1}_{N_{k+1}}(l)} \sin (\theta^l_j - \theta^{k+1}_{N_{k+1}}) + K \sin \alpha \sum_{l=0}^k \sum_{j \in \mathcal{N}^{k+1}_{N_{k+1}}(l)} \cos (\theta^l_j - \theta^{k+1}_{N_{k+1}}) \\
&= \Omega^{k+1}_{N_{k+1}} + K \cos \alpha \sum_{j \in \mathcal{N}^{k+1}_{N_{k+1}}(k+1)} \sin (\theta^{k+1}_j - \theta^{k+1}_{N_{k+1}}) \\
&+ K \cos \alpha \sum_{l=0}^k \sum_{j \in \mathcal{N}^{k+1}_{N_{k+1}}(l)} \sin (\theta^l_j - \theta^{k+1}_{N_{k+1}}) +K \sin \alpha \sum_{l=0}^{k+1} \sum_{j \in \mathcal{N}^{k+1}_{N_{k+1}}(l)} \cos (\theta^l_j - \theta^{k+1}_{N_{k+1}}) \\
&\le \Omega_M + S_{k+1}K \sin \alpha \\
&+K \cos \alpha \underbrace{\sum_{j \in \mathcal{N}^{k+1}_{N_{k+1}}(k+1)} \sin (\theta^{k+1}_j - \theta^{k+1}_{N_{k+1}})}_{\mathcal{I}_{11}} + K \cos \alpha \underbrace{\sum_{l=0}^k \sum_{j \in \mathcal{N}^{k+1}_{N_{k+1}}(l)} \sin (\theta^l_j - \theta^{k+1}_{N_{k+1}})}_{\mathcal{I}_{12}},
\end{aligned}
\end{equation}
where we use
\[|\sum_{l=0}^{k+1} \sum_{j \in \mathcal{N}^{k+1}_{N_{k+1}}(l)} \cos (\theta^l_j - \theta^{k+1}_{N_{k+1}})| \le \sum_{l=0}^{k+1} N_l = S_{k+1}.\]

\noindent $\diamond$ \textbf{Estimates on $\mathbf{\mathcal{I}_{11}}$ in \eqref{F-d6}}. We know that $\theta^{k+1}  _{N_{k+1}}$ is the largest phase among $\mathcal{G}_{k+1}$, and all the oscillators in $\bigcup_{i=0}^{k+1} \mathcal{G}_{i}$ are confined in half circle before $T^*$. Therefore, it is clear that
\begin{equation*}
\sin (\theta^{k+1}_j - \theta^{k+1}_{N_{k+1}}) \le 0, \quad \mbox{for} \ j \in \mathcal{N}_{N_{k+1}}^{k+1}(k+1).
\end{equation*}
Then we immediately have 
\begin{equation}\label{F-6}
\mathcal{I}_{11} = \sum_{j\in \mathcal{N}_{N_{k+1}}^{k+1}(k+1)} \sin (\theta^{k+1}_j - \theta^{k+1}_{N_{k+1}}) \le \min_{j \in \mathcal{N}^{k+1}_{N_{k+1}}(k+1)} \sin (\theta^{k+1}_j - \theta^{k+1}_{N_{k+1}}).
\end{equation}

\noindent $\diamond$ \textbf{Estimates on $\mathbf{\mathcal{I}_{12}}$ in \eqref{F-d6}}. For $\theta^l_j$ which is the neighbor of $\theta^{k+1}_{N_{k+1}}$ in $\mathcal{G}_l$ with $0 \le l \le k$, i.e., $j\in \mathcal{N}_{N_{k+1}}^{k+1}(l)$, we consider two possible orderings between  $\theta^l_j$ and $\theta^{k+1}_{N_{k+1}}$:

\noindent If $\theta^l_j \le \theta^{k+1}_{N_{k+1}}$, we immediately have
\begin{equation*}
\sin (\theta^{l}_j - \theta^{k+1}_{N_{k+1}}) \le 0.
\end{equation*}
\noindent If $\theta^l_j > \theta^{k+1}_{N_{k+1}}$, from the fact that
\begin{equation}\label{F-d7}
 \theta^{i}_{N_i} \ge \bar{\theta}_i \ge \underline{\theta}_i \ge \theta^i_1, \quad 0 \le i \le d,
 \end{equation}
we immediately obtain 
\begin{equation}\label{F-7}
\theta^{k+1}_{N_{k+1}} \ge \bar{\theta}_{k+1}  = \max_{0 \le i\le k+1}\{\bar{\theta}_i\}  \ge \max_{0 \le i\le k}\{\bar{\theta}_i\} \ge \min_{0 \le i\le k}\{\underline{\theta}_i\} \ge \min_{0 \le i \le k}\min_{1 \le j \le N_i} \{\theta^i_j\}.
\end{equation}
Thus we use the property of $\sin x  \le x, \ x \ge 0$ and \eqref{F-7} to get
\begin{equation*}\label{F-d8}
\sin (\theta^{l}_j - \theta^{k+1}_{N_{k+1}}) \le \theta^{l}_j - \theta^{k+1}_{N_{k+1}} \le \theta^{l}_j -  \min_{0 \le i \le k}\min_{1 \le j \le N_i} \{\theta^i_j\} \le D_k(\theta(t)).
\end{equation*}
Therefore, combining the above discussion, we have
\begin{equation}\label{F-d8}
\mathcal{I}_{12} = \sum_{l=0}^k \sum_{j \in \mathcal{N}^{k+1}_{N_{k+1}}(l)} \sin (\theta^l_j - \theta^{k+1}_{N_{k+1}}) \le S_k D_k(\theta(t))
\end{equation}
From \eqref{F-d6}, \eqref{F-6} and \eqref{F-d8}, it yields that
%\begin{equation*}
%\frac{d}{dt} \bar{\theta}^{k+1}_{N_{k+1}} \le \Omega_M + S_{k+1}K \sin \alpha + S_k K \cos \alpha D_k(\theta(t)) + K \cos \alpha \min_{j \in \mathcal{N}^{k+1}_{N_{k+1}}(k+1)} \sin (\theta^{k+1}_j - \theta^{k+1}_{N_{k+1}}).
%\end{equation*}
 \eqref{F-5} holds for $n = N_{k+1}$.\newline

\noindent $\bigstar$ \textbf{Step 1.2.} Next, we assume that \eqref{F-5} holds for $n$ with $2 \le n \le N_{k+1}$, and we  will show that \eqref{F-5} holds for $n-1$. Following the process $\mathcal{A}_1$ and similar analysis in \eqref{F-d6}, we have
\begin{equation}\label{F-8}
\begin{aligned}
\dot{\bar{\theta}}^{k+1}_{n-1} &\le \Omega_M + \frac{\bar{a}^{k+1}_{n-1}}{\bar{a}^{k+1}_{n-1}+1} S_{k+1}K\sin \alpha + \frac{\bar{a}^{k+1}_{n-1}}{\bar{a}^{k+1}_{n-1}+1} S_kK \cos \alpha D_k (\theta(t))+ K \sin\alpha \frac{1}{\bar{a}^{k+1}_{n-1}+1}S_{k+1} \\
&+ K\cos \alpha \frac{1}{\bar{a}^{k+1}_{n-1} + 1} \eta (N_{k+1}-n+2+S_k) \sum_{i=n}^{N_{k+1}}\left( \eta^{i-n} \underset{j \le i}{\min_{j\in \mathcal{N}_{i}^{k+1}(k+1)}} \sin (\theta^{k+1}_j - \theta^{k+1}_{i})\right) \\
&+ K \cos\alpha \frac{1}{\bar{a}^{k+1}_{n-1}+1} \underset{j \le n-1}{\min_{j\in \mathcal{N}_{n-1}^{k+1}(k+1)}} \sin (\theta^{k+1}_j - \theta^{k+1}_{n-1}) \\
&+ K \cos\alpha \frac{1}{\bar{a}^{k+1}_{n-1}+1}\underbrace{\underset{j > n-1}{\sum_{j\in \mathcal{N}_{n-1}^{k+1}(k+1)}}\sin (\theta^{k+1}_j - \theta^{k+1}_{n-1})}_{\mathcal{I}_{21}}\\
&+ K \cos\alpha \frac{1}{\bar{a}^{k+1}_{n-1}+1} \underbrace{\sum_{l=0}^k \sum_{j\in \mathcal{N}_{n-1}^{k+1}(l)} \sin (\theta^{l}_j - \theta^{k+1}_{n-1})}_{\mathcal{I}_{22}}.
\end{aligned}
\end{equation}
Next we do some estimates about the terms $\mathcal{I}_{21}$ and $\mathcal{I}_{22}$ in \eqref{F-8} seperately.\newline

\noindent $\diamond$ \textbf{Estimates on $\mathbf{\mathcal{I}_{21}}$ in \eqref{F-8}.} Without loss of generality, we only deal with $\mathcal{I}_{21}$ under the situation $\gamma > \frac{\pi}{2}$. We first apply the strong connectivity of $\mathcal{G}_{k+1}$ and Lemma \ref{eta_sin_inequality} to obtain that
\begin{equation}\label{F-9}
\sum_{i=n}^{N_{k+1}}\left( \eta^{i-n} \underset{j \le i}{\min_{j\in \mathcal{N}_{i}^{k+1}(k+1)}} \sin (\theta^{k+1}_j(t) - \theta^{k+1}_{i}(t))\right) \le \sin(\theta^{k+1}_{\bar{k}_{n}} - \theta^{k+1}_{N_{k+1}}), 
\end{equation}
where $\bar{k}_{n} = \min_{j \in \bigcup_{i=n}^{N_{k+1}} \mathcal{N}^{k+1}_{i}(k+1)} j \le n-1$.
According to \eqref{F-9}, we consider two cases depending on comparison between $\theta^{k+1}_{N_{k+1}} - \theta^{k+1}_{\bar{k}_{n}}$ and $\frac{\pi}{2}$. \newline
\noindent \textbf{(i)} For the first case that $ 0 \le \theta^{k+1}_{N_{k+1}} - \theta^{k+1}_{\bar{k}_{n}} \le \frac{\pi}{2}$, we immediately obtain that for $j \in \mathcal{N}^{k+1}_{n-1}(k+1), \ j>n-1$,
\begin{equation}\label{F-10}
0 \le \theta^{k+1}_{j}(t) - \theta^{k+1}_{n-1}(t) \le \theta^{k+1}_{N_{k+1}}(t) - \theta^{k+1}_{n-1}(t) \le \theta^{k+1}_{N_{k+1}}(t) - \theta^{k+1}_{\bar{k}_{n}}(t) \le \frac{\pi}{2}.
\end{equation}
Then it yieldst from \eqref{F-9}, \eqref{F-10} and $\eta > 2$ that
\begin{equation*}
\begin{aligned}
&\eta(N_{k+1} -  n + 1) \sum_{i=n}^{N_{k+1}}\left( \eta^{i-n} \underset{j \le i}{\min_{j\in \mathcal{N}_{i}^{k+1}(k+1)}} \sin (\theta^{k+1}_j(t) - \theta^{k+1}_{i}(t))\right) + \mathcal{I}_{21} \\
& \le \eta(N_{k+1} -  n + 1) \sin(\theta^{k+1}_{\bar{k}_{n}} - \theta^{k+1}_{N_{k+1}}) + \underset{j > n-1}{\sum_{j\in \mathcal{N}_{n-1}^{k+1}(k+1)}}\sin (\theta^{k+1}_j - \theta^{k+1}_{n-1}) \\
& \le (N_{k+1} -  n + 1) \sin(\theta^{k+1}_{\bar{k}_{n}} - \theta^{k+1}_{N_{k+1}}) + (N_{k+1} -  n + 1) \sin (\theta^{k+1}_{N_{k+1}} - \theta^{k+1}_{n-1}) \\
&\le 0.
\end{aligned}
\end{equation*}
\noindent \textbf{(ii)} For the second case that $ \frac{\pi}{2} < \theta^{k+1}_{N_{k+1}} - \theta^{k+1}_{\bar{k}_{n}} < \gamma$, it is known that
\begin{equation}\label{F-11}
\eta > \frac{1}{\sin \gamma} \quad \mbox{and} \quad \sin(\theta^{k+1}_{N_{k+1}} - \theta^{k+1}_{\bar{k}_{n}}) > \sin \gamma,
\end{equation}
which yields $\eta \sin(\theta^{k+1}_{\bar{k}_{n}} - \theta^{k+1}_{N_{k+1}} ) \le -1$. Thus we immediately derive that
\begin{equation*}
\begin{aligned}
&\eta(N_{k+1} -  n + 1) \sum_{i=n}^{N_{k+1}}\left( \eta^{i-n} \underset{j \le i}{\min_{j\in \mathcal{N}_{i}^{k+1}(k+1)}} \sin (\theta^{k+1}_j(t) - \theta^{k+1}_{i}(t))\right) + \mathcal{I}_{21} \\
& \le \eta(N_{k+1} -  n + 1) \sin(\theta^{k+1}_{\bar{k}_{n}} - \theta^{k+1}_{N_{k+1}}) + \underset{j > n-1}{\sum_{j\in \mathcal{N}_{n-1}^{k+1}(k+1)}}\sin (\theta^{k+1}_j - \theta^{k+1}_{n-1}) \\
& \le -(N_{k+1} -  n + 1) +(N_{k+1} -  n + 1) = 0.
\end{aligned}
\end{equation*}

Then, we combine the above arguments in (i) and (ii) to obtain
\begin{equation}\label{F-12}
\eta(N_{k+1} -  n + 1) \sum_{i=n}^{N_{k+1}}\left( \eta^{i-n} \underset{j \le i}{\min_{j\in \mathcal{N}_{i}^{k+1}(k+1)}} \sin (\theta^{k+1}_j - \theta^{k+1}_{i})\right) + \mathcal{I}_{21} \le 0.
\end{equation}

\noindent $\diamond$ \textbf{Estimates on $\mathbf{\mathcal{I}_{22}}$ in \eqref{F-8}.} For the term $\mathcal{I}_{22}$, there are three possible relations between $\theta^{k+1}_{n-1}$ and  $\theta^l_j$ with $0 \le l \le k$:\newline

\noindent \textbf{(i)} If $\theta^l_j \le \theta^{k+1}_{n-1}$, we immediately have $\sin (\theta^{l}_j - \theta^{k+1}_{n-1}) \le 0$.\newline

\noindent \textbf{(ii)} If $ \theta^{k+1}_{n-1} < \theta^l_j \le \theta^{k+1}_{N_{k+1}}$, we consider two cases separately:\newline

\noindent (a) For the case that $ 0 \le \theta^{k+1}_{N_{k+1}} - \theta^{k+1}_{\bar{k}_{n}} \le \frac{\pi}{2}$, it is clear that
\[0 \le \theta^l_j - \theta^{k+1}_{n-1} \le \theta^{k+1}_{N_{k+1}} - \theta^{k+1}_{n-1}\le \theta^{k+1}_{N_{k+1}} - \theta^{k+1}_{\bar{k}_{n}} \le \frac{\pi}{2}.\]
Thus from the above inequality and \eqref{F-9}, we have
\begin{equation*}
\begin{aligned}
&\eta \sum_{i=n}^{N_{k+1}}\left( \eta^{i-n} \underset{j \le i}{\min_{j\in \mathcal{N}_{i}^{k+1}(k+1)}} \sin (\theta^{k+1}_j(t) - \theta^{k+1}_{i}(t))\right) + \sin (\theta^l_j - \theta^{k+1}_{n-1}) \\
&\le \eta \sin(\theta^{k+1}_{\bar{k}_{n}} - \theta^{k+1}_{N_{k+1}}) + \sin (\theta^l_j - \theta^{k+1}_{n-1}) \\
&\le \sin(\theta^{k+1}_{\bar{k}_{n}} - \theta^{k+1}_{N_{k+1}}) + \sin(\theta^{k+1}_{N_{k+1}} - \theta^{k+1}_{\bar{k}_{n}}) = 0.
\end{aligned} 
\end{equation*}
\noindent (b) For another case that $ \frac{\pi}{2} < \theta^{k+1}_{N_{k+1}} - \theta^{k+1}_{\bar{k}_{n}} < \gamma$, it is yields from \eqref{F-11} that
\begin{equation*}
\begin{aligned}
&\eta \sum_{i=n}^{N_{k+1}}\left( \eta^{i-n} \underset{j \le i}{\min_{j\in \mathcal{N}_{i}^{k+1}(k+1)}} \sin (\theta^{k+1}_j(t) - \theta^{k+1}_{i}(t))\right) + \sin (\theta^l_j - \theta^{k+1}_{n-1}) \\
&\le \eta \sin(\theta^{k+1}_{\bar{k}_{n}} - \theta^{k+1}_{N_{k+1}}) + \sin (\theta^l_j - \theta^{k+1}_{n-1}) \\
&\le -1 + 1 = 0
\end{aligned} 
\end{equation*}
Hence, combining the above arguments in (a) and (b), we obtain that
\[\eta \sum_{i=n}^{N_{k+1}}\left( \eta^{i-n} \underset{j \le i}{\min_{j\in \mathcal{N}_{i}^{k+1}(k+1)}} \sin (\theta^{k+1}_j(t) - \theta^{k+1}_{i}(t))\right) + \sin (\theta^l_j - \theta^{k+1}_{n-1}) \le 0.\]
$\ $

\noindent \textbf{(iii)} If $\theta^l_j > \theta^{k+1}_{N_{k+1}}$, we exploit the concave property of sine function in $[0, \pi]$ to get
\begin{equation}\label{F-13}
\sin (\theta^l_j - \theta^{k+1}_{n-1}) \le \sin (\theta^l_j - \theta^{k+1}_{N_{k+1}}) + \sin (\theta^{k+1}_{N_{k+1}} - \theta^{k+1}_{n-1}).
\end{equation}
For the second part on the right-hand side of above inequality \eqref{F-13}, we apply the same analysis in (ii) to obtain
\[\eta \sum_{i=n}^{N_{k+1}}\left( \eta^{i-n} \underset{j \le i}{\min_{j\in \mathcal{N}_{i}^{k+1}(k+1)}} \sin (\theta^{k+1}_j - \theta^{k+1}_{i})\right) + \sin (\theta^{k+1}_{N_{k+1}} - \theta^{k+1}_{n-1}) \le 0.\]
For the first part on the right-hand side of \eqref{F-13}, the calculation is the same as \eqref{F-d8}, thus we have
\begin{equation*}
\sin (\theta^{l}_j - \theta^{k+1}_{N_{k+1}}) \le \theta^{l}_j - \theta^{k+1}_{N_{k+1}} \le \theta^{l}_j -  \min_{0 \le i \le k}\min_{1 \le j \le N_i} \{\theta^i_j\} \le D_k(\theta(t)).
\end{equation*}

Therefore, we combine the above estimates to obtain
\begin{equation}\label{F-14}
\begin{aligned}
&\eta S_k \sum_{i=n}^{N_{k+1}}\left( \eta^{i-n} \underset{j \le i}{\min_{j\in \mathcal{N}_{i}^{k+1}(k+1)}} \sin (\theta^{k+1}_j(t) - \theta^{k+1}_{i}(t))\right) + \mathcal{I}_{22} \\
& \le \eta S_k \sin(\theta^{k+1}_{\bar{k}_{n}} - \theta^{k+1}_{N_{k+1}}) + \sum_{l=0}^k \sum_{j\in \mathcal{N}_{n-1}^{k+1}(l)} \sin (\theta^{l}_j - \theta^{k+1}_{n-1}) \\
& \le S_k D_k(\theta(t)).
\end{aligned}
\end{equation}
Then from \eqref{F-8}, \eqref{F-12}, and \eqref{F-14} , it yields that
\begin{equation*}
\begin{aligned}
\frac{d}{dt} \bar{\theta}^{k+1}_{n-1} &\le \Omega_M + S_{k+1}K \sin\alpha + \frac{\bar{a}^{k+1}_{n-1}}{\bar{a}^{k+1}_{n-1}+1} S_kK \cos \alpha D_k (\theta(t)) + + K\cos\alpha \frac{1}{\bar{a}^{k+1}_{n-1} + 1} S_kD_k(\theta(t))\\
&+  K\cos \alpha \frac{1}{\bar{a}^{k+1}_{n-1} + 1} \eta  \sum_{i=n}^{N_{k+1}}\left( \eta^{i-n} \underset{j \le i}{\min_{j\in \mathcal{N}_{i}^{k+1}(k+1)}} \sin (\theta^{k+1}_j - \theta^{k+1}_{i})\right)\\
&+K \cos\alpha \frac{1}{\bar{a}^{k+1}_{n-1}+1} \underset{j \le n-1}{\min_{j\in \mathcal{N}_{n-1}^{k+1}(k+1)}} \sin (\theta^{k+1}_j - \theta^{k+1}_{n-1})\\
&\le \Omega_M + S_{k+1}K\sin\alpha + S_kK\cos\alpha D_k(\theta(t))\\
&+K\cos \alpha \frac{1}{\bar{a}^{k+1}_{n-1} + 1} \sum_{i=n-1}^{N_{k+1}}\left( \eta^{i-(n-1)} \underset{j \le i}{\min_{j\in \mathcal{N}_{i}^{k+1}(k+1)}} \sin (\theta^{k+1}_j - \theta^{k+1}_{i})\right)
\end{aligned}
\end{equation*}
This means that the claim \eqref{F-5} does hold for $n-1$. Therefore, we apply the inductive criteria to verify the claim \eqref{F-5}.\newline 

\noindent $\bigstar$ \textbf{Step 2.} Now we are ready to prove \eqref{F-1} on $J_l$ for Case 2. In fact, we apply Lemma \ref{eta_sin_inequality} and the strong connectivity of $\mathcal{G}_{k+1}$ to have
\begin{equation*}
\sum_{i=1}^{N_{k+1}}\left( \eta^{i-1} \underset{j \le i}{\min_{j\in \mathcal{N}_{i}^{k+1}(k+1)}} \sin (\theta^{k+1}_j - \theta^{k+1}_{i})\right) \le \sin(\theta^{k+1}_1 - \theta^{k+1}_{N_{k+1}})
\end{equation*}
From the notations in \eqref{abbreviation_k} and \eqref{bar_underline_k}, it is known that
\begin{equation*}
\bar{\theta}_1^{k+1} = \bar{\theta}_{k+1}, \quad \underline{\theta}_{N_{k+1}}^{k+1} = \underline{\theta}_{k+1}.
\end{equation*}
Thus, we exploit the above inequality and set $n=1$ in \eqref{F-5} to obtain
\begin{equation}\label{F-15}
\begin{aligned}
\frac{d}{dt} \bar{\theta}_{k+1} &= \frac{d}{dt} \bar{\theta}^{k+1}_1\\
&\le \Omega_M + S_{k+1}K \sin\alpha + S_kK\cos\alpha D_k(\theta(t))\\
&+ K\cos \alpha \frac{1}{\bar{a}^{k+1}_{1} + 1} \sum_{i=1}^{N_{k+1}}\left( \eta^{i-1} \underset{j \le i}{\min_{j\in \mathcal{N}_{i}^{k+1}(k+1)}} \sin (\theta^{k+1}_j - \theta^{k+1}_{i})\right) \\
&\le \Omega_M + S_{k+1}K \sin\alpha + S_kK\cos\alpha D_k(\theta(t)) + K\cos \alpha \frac{1}{\bar{a}^{k+1}_{1} + 1} \sin(\theta^{k+1}_1 - \theta^{k+1}_{N_{k+1}})
\end{aligned}
\end{equation}
We further apply the similar arguments in \eqref{F-15} to derive the differential inequality of $\underline{\theta}_{k+1}$ as below
\begin{equation}\label{F-16}
\frac{d}{dt} \underline{\theta}_{k+1} \ge \Omega_m - S_{k+1}K\sin \alpha - S_kK\cos\alpha D_k(\theta(t)) + K\cos \alpha \frac{1}{\bar{a}^{k+1}_{1} + 1}  \sin (\theta^{k+1}_{N_{k+1}} - \theta^{k+1}_1).
\end{equation}
Due to the monotone decreasing property of $\frac{\sin x}{x}$ in $(0, \pi]$ and from \eqref{F-4}, it is obvious that
\[\sin (\theta^{k+1}_{N_{k+1}} - \theta^{k+1}_1) \ge \frac{\sin \gamma}{\gamma}(\theta^{k+1}_{N_{k+1}} - \theta^{k+1}_{1}).\]
Then we combine the above inequality, \eqref{F-15}, \eqref{F-16} and \eqref{a^k_1-size} to get
\begin{equation*}
\begin{aligned}
\dot{Q}^{k+1}(t) &= \frac{d}{dt} (\bar{\theta}_{k+1} - \underline{\theta}_{k+1}) \\
&\le D(\Omega) + 2S_{k+1}K\sin\alpha + 2S_kK\cos\alpha D_k(\theta(t)) \\
&- K\cos \alpha \frac{2}{\bar{a}^{k+1}_{1} + 1}  \sin (\theta^{k+1}_{N_{k+1}} - \theta^{k+1}_1) \\
&\le D(\Omega) + 2S_{k+1}K\sin\alpha + 2S_kK\cos\alpha D_k(\theta(t)) \\
&- K\cos \alpha \frac{1}{\bar{a}^{k+1}_{1} + 1} \frac{\sin \gamma}{\gamma} (\theta^{k+1}_{N_{k+1}} - \theta^{k+1}_1)\\
&\le D(\Omega) + 2NK\sin \alpha + (2N+1)K \cos \alpha D_k(\theta(t)) \\
&- K\cos \alpha \frac{1}{\sum_{j=1}^{N-1}\eta^jA(2N,j) + 1} \frac{\sin \gamma}{\gamma}Q^{k+1}(t),\qquad t \in J_l,
\end{aligned}
\end{equation*}
where we use the fact that $Q^{k+1}(t) \le \theta^{k+1}_{N_{k+1}}(t) - \theta^{k+1}_1(t)$ and \eqref{a^k_1-size}. Eventually, for Case 2, we obtain the dynamics for $Q^{k+1}(t)$ in \eqref{F-1} on $J_l$, i.e.,
\begin{equation*}
\begin{aligned}
\dot{Q}^{k+1}(t) &\le D(\Omega) + 2NK\sin\alpha + (2N+1)K\cos\alpha D_k(\theta(t)) - \frac{K\cos\alpha}{c}Q^{k+1}(t), \ t\in [0,T^*).\newline
\end{aligned}
\end{equation*}

\end{appendix}

%\noindent\textbf{Acknowledgements.} The author appreciated the guidance and modification of Xiongtao Zhang which helped to enhance the previous manuscript.

\vspace{0.5cm}

\vspace{0.5cm}


\begin{thebibliography}{00}

\bibitem{B-S00} N. J. Balmforth and R. Sassi, A shocking display of synchrony, Phys. D, 143 (2000), 21--55.

\bibitem{B-B96} J. Buck and E. Buck, Biology of synchronous flashing of fireflies, Nature, 211 (1996), 562--564.

\bibitem{C-H-J-K12} Y.-P. Choi, S. Y. Ha, S. Jung and Y. Kim, Asymptotic formation and orbital stability of phase-locked states for the Kuramoto model,  Phys. D, 241 (2012), 735--754.

\bibitem{C-L18} Y.-P. Choi and Z. Li, Synchronization of nonuniform Kuramoto oscillators for power grids with general connectivity and dampings, Nonlinearity, 32 (2019), 559--583.

\bibitem{C-S09} N. Chopra and M. W. Spong, On exponential synchronization of Kuramoto oscillators, IEEE Trans. Automat. Control, 54 (2009), 353--357.

\bibitem{D92} H. Daido, Quasientrainment and slow relaxation in a population of oscillators with random
and frustrated interactions, Phys. Rev. Lett., 68 (1992), 1073--1076.

\bibitem{D-M08} P. Degond and S. Motsch, Large-scale dynamics of the persistent turning walker model of fish behavior, J. Stat. Phys., 131 (2008),  989--1022.

\bibitem{D-H-K19} J.-G. Dong, S.-Y. Ha and D. Kim, Interplay of time-delay and velocity alignment in the Cucker-Smale model on a general digraph, Discrete Contin. Dyn. Syst.-Ser. B, 24 (2019), 5569--5596 .

\bibitem{D-H-K20} J.-G. Dong, S.-Y. Ha and D. Kim, Emergent Behavior of the Kuramoto model with a time-delay on a general digraph, SIAM J. Appl. Dyn. Syst., 19 (2020), 304--328.

\bibitem{D-X} J.-G. Dong and X. Xue, Synchronization analysis of Kuramoto oscillators, Commun. Math. Sci., 11 (2013), 465--480.

\bibitem{D-B11} F. D\"{o}rfler and F. Bullo, On the critical  coupling for Kuramoto oscillators, SIAM J. Appl. Dyn. Syst., 10 (2011), 1070--1099.

\bibitem{D-B12} F. D\"{o}rfler, and F. Bullo, Synchronization and transient stability in power networks and nonuniform
Kuramoto oscillators, SIAM J. Control Optim.,  50 (2012), 1616--1642.

\bibitem{D-C-B13} F. D\"{o}rfler, M. Chertkov and F. Bullo, Synchronization in complex oscillator networks and smart
grids, Proc. Natl. Acad. Sci., 110 (2013), 2005--2010.

\bibitem{H-H-K} S. -Y.Ha, T. Ha and J. -H. Kim, On the complete synchronization of the Kuramoto phase model, Phys. D, 239 (2010), 1692--1700.

\bibitem{H-K-L14} S.-Y. Ha, Y. Kim and Z. Li, Asymptotic synchronous behavior of Kuramoto type models with frustrations, Netw. Heterog. Media, 9 (2014), 33--64.

\bibitem{Ha-K-L14} S.-Y. Ha, Y. Kim and Z. Li, Large-time dynamics of Kuramoto oscillators under the effects of inertia and frustration, SIAM J. Appl. Dyn. Syst., 13 (2014), 466--492.

\bibitem{H-K-K-Z} S.-Y. Ha, D. Kim, J. Kim and X. Zhang, Asymptotic behavior of discrete Kuramoto model and uniform-in-time transition from discrete to continuous dynamics, J. Math. Phys., 60 (2019), 051508.

\bibitem{H-K-L-N21} Ha, S.-Y., Kim, D., Lee, J. and Noh, S.E.: \textit{Synchronization conditions of a mixed Kuramoto ensemble in attractive and repulsive couplings.} J. Nonlinear Sci. {\bf 31}, 39 (2021).

\bibitem{H-K-P15} S.-Y. Ha, H. K. Kim and J. Park, Remarks on the complete synchronization of Kuramoto oscillators, Nonlinearity, 28 (2015), 1441--1462.

\bibitem{H-K-P18} S.-Y. Ha, H. Kim and J. Park, Remarks on the complete synchronization for the Kuramoto model with frustrations, Anal. Appl., 16 (2018), 525--563.

\bibitem{H-K-R} S.-Y. Ha, H. K. Kim and S. W. Ryoo, Emergence of phase-locked states for the Kuramoto model in a large coupling regime, Commun. Math. Sci., 14 (2016), 1073--1091.


\bibitem{H-K-Z18}S.-Y. Ha, D. Ko and Y. Zhang, Emergence of phase-locking in the Kuramoto model for identical oscillators with frustration, SIAM J. Appl. Dyn. Syst., 18 (2018), 581--625.

\bibitem{H-L14} S.-Y. Ha and Z. Li, Complete synchronization of Kuramoto oscillators with hierarchical leadership, Commun. Math. Sci., 12 (2014), 485--508.

\bibitem{H-L-X} S.-Y. Ha, Z. Li and X. Xue, Formation of phase-locked states in a population of locally interacting Kuramoto oscillators, J. Differ. Equ., 255 (2013),  3053--3070.

\bibitem{H-L-Z20}  S.-Y. Ha, Z. Li and X. Zhang, On the critical exponent of the one-dimensional Cucker Smale model on a general graph,
Math. Models Meth. Appl. Sci., 
30 (2020), 1653--1703.

\bibitem{H-R20} S.-Y. Ha and S. W. Ryoo, Asymptotic phase-Locking dynamics and critical coupling strength for the Kuramoto model, Commun. Math. Phys.,  377 (2002), 811--857. 

\bibitem{Ku} Y. Kuramoto, Self-entrainment of a population of coupled non-linear oscillators, in "International Symposium on Mathematical Problems in Theoretical Physics" (ed. H. Araki), Springer Berlin Heidelberg (1975).

\bibitem{Ku84} Y. Kuramoto, "Chemical Oscillations, Waves and Turbulence," Springer-Verlag, Berlin, 1984.

\bibitem{L-P-L-S} N. E. Leonard, D. A. Paley, F. Lekien, R. Sepulchre, D. M. Fratantoni and R. E. Davis, Collective motion, sensor networks, and ocean sampling, Proc. IEEE, 95 (2007), 48--74.

\bibitem{L11} Z. Levnaji\'{c} Emergent multistability and frustration in phase-repulsive networks of oscillators, Phys. Rev. E, 84 (2011), 016231.

\bibitem{L-H16} Z. Li and S.-Y. Ha, Uniqueness and well-ordering of emergent phase-locked states for
the Kuramoto model with frustration and inertia, Math. Models Methods Appl. Sci., 26 (2016), 357--382.

\bibitem{M-S07} R. E. Mirollo, and S. H. Strogatz, The spectrum of the partially locked state for the Kuramoto model of coupled oscillator, J. Non- linear Sci., 17 (2007), 309--347.

\bibitem{O-C-K-K08} E. Oh, C. Choi, B. Kahng and D. Kim, Modular synchronization in complex networks with a gauge Kuramoto model, EPL, 83 (2008), 68003.

\bibitem{P-L-S-G-P} D. A. Paley, N. E. Leonard, R. Sepulchre and D. Grunbaum, Oscillator models and collective motion: spatial patterns in the dynamics of engineered and biological networks, IEEE Control Sys., 27 (2007), 89--105.

\bibitem{P-R-C98} K. Park, S. W. Rhee and M. Y. Choi, Glass synchronization in the network of oscillators with random phase shift, Phys. Rev. E, 57 (1998), 5030--5035.

\bibitem{P-E-G} L. Perea, G. G\'{o}mez and P. Elosegui, Extension of the Cucker-Smale control law to space flight formations, J. Guid. Control Dynam., 32 (2009), 527--537.

\bibitem{P-R-K} A. Pikovsky, M. Rosenblum and J. Kurths, "Synchronization: A Universal Concept in Nonlinear Sciences," Cambridge University Press (2001).

\bibitem{S-K86} H. Sakaguchi and Y. Kuramoto, A soluble active rotator model showing phase tran- sitions via mutual entrainment, Prog. Theor. Phys., 76 (1986), 576--581.

\bibitem{St} S. H. Strogatz, From Kuramoto to Crawford: exploring the onset of synchronization in populations of coupled oscillators, Phys. D, 143 (2000), 1--20.

\bibitem{S-S93} S. H. Strogatz and I. Stewart, Coupled oscillators and biological synchronization, Sci. Amer., 269 (1993), 101--109.

\bibitem{T-A-A09} T. Tanaka, T. Aoki and T. Aoyagi, Dynamics in co-evolving networks of active elements, Forma, 24 (2009), 17--22
.

\bibitem{T-T} Toner and Y. Tu, Flocks, herds, and schools: A quantitative theory of flocking,
Phys. Rev. E, 58 (1998), 4828--4858.



%\bibitem{A-B-V-R-S05} Acebron, J. A., Bonilla, L. L., Vicente, C. J. P.,  Ritort, F. and Spigler, R.: \textit{The Kuramoto model: A simple paradigm for synchronization phenomena.} Rev. Mod. Phys. {\bf 77}, 137--185 (2005).

%\bibitem{C-S2} Cucker, F. and Smale, S.: \textit{Emergent behavior in flocks.} IEEE Trans. Automat. Control {\bf 52}, 852--862 (2007).

%\bibitem{D-M07} Degond, P. and Motsch, S..: \textit{Macroscopic limit of self-driven particles with orientation interaction.} C. R. Math. {\bf 345}, 555--560 (2007).

%\bibitem{De-M08} Degond, P. and Motsch, S.: \textit{Continuum limit of self-driven particles with orientation
%interaction.} Math. Models Methods Appl. Sci. {\bf 18}, 1193--1215 (2008).

%\bibitem{H-KD18} Ha, S.-Y. and Kim, D.: \textit{Robustness and asymptotic stability for the Winfree model on a general network under the effect of time-delay.} J. Math. Phys. {\bf 59}, 112702 (2018).

%\bibitem{H-K-P-R16} Ha, S.-Y., Ko, D., Park, J. and Ryoo, S. W.:\textit{Emergent dynamics of Winfree oscillators on locally coupled networks.} J. Differ. Equ. {\bf 260}, 4203--4236 (2016).


%\bibitem{M-S05} Mirollo, R. E. and Strogatz, S. H.: \textit{The spectrum of the locked state for the Kuramoto model of coupled oscillator.} Phys. D {\bf 205}, 249--266 (2005).

%\bibitem{S-A07} Smet, F. De and Aeyels, D.:\textit{Partial entrainment in the finite Kuramoto-Sakaguchi model.} Phys. D {\bf 234}, 81--89 (2007).



%\bibitem{T-B} Topaz, C. M. and Bertozzi, A. L.: \textit{Swarming patterns in a two-dimensional kinematic model for biological groups.} SIAM J. Appl. Math. {\bf 65}, 152--174 (2004).

%\bibitem{V-C-B-C-S} Vicsek, T., Czir\'ok,  A., Ben-Jacob, E., Cohen, I. and Schochet, O.: \textit{Novel type of phase transition in a system of self-driven particles.} Phys. Rev. Lett. {\bf 75}, 1226--1229 (1995).

\bibitem{Wi} A. T. Winfree, Biological rhythms and behavior of populations of coupled oscillators, J. Theor. Biol., 16 (1967), 15--42.

\bibitem{Z11} Z. G. Zheng, Frustration effect on synchronization and chaos in coupled oscillators, Chin. Phys. Soc., 10 (2011), 703--707.

 \bibitem{Z-Z21} X. Zhang and T. Zhu, Emergence of synchronization in Kuramoto model with general digraph, arXiv:2107.06487 [math.DS].
 

\end{thebibliography}
\end{document}